\newtheorem{thm}{Theorem}[section]
\newtheorem{lem}[thm]{Lemma}
\theoremstyle{definition}
\theoremstyle{remark}
\newtheorem{rem}[thm]{Remark}
\numberwithin{equation}{section}
\numberwithin{thm}{section}
\newcommand{\ed}{\end {document}}
\newcounter{smalllist}
\title[global solution for MHD]{Global well-posedness for the two dimensional compressible
MHD equations with large data}
\author[D. Bian]{Dongfen Bian}
\address{The Graduate School of China Academy of Engineering Physics,
P. O. Box 2101,\ Beijing,\ China,\  100088,}%
\email{bian\_dongfen@mail.com}
\author[B. Guo]{Boling Guo}
\address{Institute of Applied Physics and Computational Mathematics,  P. O. Box 8009,\ Beijing,\ China,\
100088,}
\email{gbl@iapcm.ac.cn}
\begin{document}
\maketitle
\begin{abstract}
In this paper we are concerned with the global well-posedness for the compressible MHD equations with large data. We show that if the shear viscosity $\mu$ is a positive constant and the bulk viscosity $\lambda$ is the power function of the density, that is, $\lambda(\rho)=\rho^{\beta}$ with $\beta>3$, then the two dimensional compressible
MHD system with the periodic boundary conditions on the torus $\mathbb{T}^2$ have a unique global classical solution $(\rho, u,H)$. In this work we extended the results about compressible Navier-Stokes equations in \cite{Karzhikhov} to compressible MHD equations by applying several new techniques to overcome the coupling between velocity and magnetic field.
\end{abstract}

\textbf{Key words~~~}compressible MHD equations; isentropic fluids; global well-posedness; density-dependent viscosity

\section{Introduction}

In this paper, we consider the following compressible Magnetohydrodynamics (MHD) equations on $\mathbb{T}^2$,
\begin{equation}\label{eq_EP_1}
  \begin{cases}
   \partial_t \rho+\mbox{div}(\rho u)=0,\\[0.2cm]
\partial_t (\rho u)+ \mbox{div}(\rho u\otimes u)+\nabla p=\nabla \times H\times H+\mu\Delta u
   +\nabla((\mu+\lambda(\rho))\mbox{div}u) ,\\[0.2cm]
  \partial_t H- \nabla \times(u\times H)-\nu \Delta H=0,\\[0.2cm]
   \mbox{div}H=0,\\[0.2cm]
   (\rho(x,t),H(x,t),u(x,t))_{|t=0}=(\rho_{0}(x),H_{0}(x),u_{0}(x)),
\end{cases}
\end{equation}
which describes the motion of electrically conducting
media in the presence of a magnetic field.
 Here $\rho,\  u,\  H$ and $ p$
 denote the density, velocity, magnetic field and
  pressure respectively. $\rho_0(x)$, $ u_0(x)$ and $H_0(x)$ are initial values.
The pressure term $p(\rho)$ is assumed to obey the polytropic $\gamma$-law, i.e.
\begin{align} \label{eq_EP_2}
p(\rho) = a \rho^\gamma,
\end{align}
where $a$ is the entropy constant and normalized to be one without loss of generality and $\gamma > 1$ is called the adiabatic index. Also, we assume that the functions $\mu(\rho)$, and $\lambda(\rho)$ are defined on $[0,+\infty)$ and satisfy the conditions
\begin{equation}\label{isentropic}
\mu(\rho)=\mbox{const}>0,\ \lambda(\rho)=\rho^{\beta}, \ \beta>3.
 \end{equation}
 And $\mathbb{T}^2$ is the 2 dimensional torus $[0,1]\times [0,1]$ and $t\in [0,T]$ for any fixed $T>0$.

  For smooth initial data such that the
density $\rho_{0}$ is
  bounded and bounded away from zero (i. e. $0<\underline{\rho} \leq \rho_{0}(x)\leq
  M$), existence and uniqueness of local classical solutions to the compressible Navier-Stokes equations have
  been known for a long time (see the pioneering work of J. Nash \cite{J.N} or the paper of
  N. Itaya \cite{n.i}). Matsumura and Nishida \cite{m} proved
the global well-posedness for compressible Navier-Stokes equations
for smooth data close to equilibrium. The reader may refer to
\cite{c2, d1, d2, d3, d4, d5, d6, hoff, k} for more recent advances
on the subject. Particularly,  Danchin
has obtained several important well-posedness in critical spaces for compressible
Navier-Stokes equations \cite{d1, d2, d3, d4, d5}. Chen-Miao-Zhang \cite{c2} have
 proved the local well-posedness in $\dot{B}^{1}_{2,1}\times (\dot{B}^{0}_{2,1})^{2}$
for the viscous shallow water equations and for compressible
Navier-Stokes equations with density dependent viscosities in the
 Besov spaces $\dot{B}^{\frac{N}{p}}_{p,1}$ \cite{c1}. Bian and Yuan
have obtained local well-posedness in the critical Besov
spaces \cite{Bian} and super critical Besov
spaces \cite{Bian2} for the compressible MHD equations. Concerning the global existence of weak
solutions to compressible Navier-Stokes equations for the large
initial data, readers refer to \cite{B1, B2, lions, a.m}, and refer
to \cite{B3, c2, w, H1} and references therein for the viscous
shallow water equations. Recently, Vaigant-Kazhikhov \cite{Karzhikhov} obtained global well-posedness of strong and classical solution for the compressible Navier-Stokes system with large data and without vacuum. Jiu-Wang-Zhou \cite{xin2}  generalized this result to the case which may
contain vacuums with the periodic boundary conditions on the torus $\mathbb{T}^2$.

 Due to the physical importance and
mathematical challenges, the study on \eqref{eq_EP_1} has attracted many
physicists and mathematicians \cite{1, 2, 3}. Existence and uniqueness of (weak,
strong or smooth) solutions in one dimension can be found in
\cite{4, 5, 6, Kato, 7} and the references cited
therein. Construction of global classical or strong solutions to the Navier-Stokes eqautions in the high-dimensional case was
open, and more difficult for the system \eqref{eq_EP_1} since the velocity and magnetic couple and there are more nonlinear terms.
 This paper is devoted to construct global classical solution for the 2-dimensional compressible MHD equations \eqref{eq_EP_1} with large data.
we extended the results in \cite{Karzhikhov} by applying several new techniques to overcome the coupling between velocity and magnetic field.

  In
the absence of heat conduction, it was proved by Z. P. Xin that any
non-zero smooth solution with initially compact supported density
would blow up in finite time (see \cite{xin}). This result was generalized to the
cases for the non-barotropic compressible Navier-Stokes system with
heat conduction \cite{cho} and for non-compact but rapidly
decreasing at far field initial densities \cite{LO}. As a reasonable
starting point, we will therefore restrict our work to solutions
such that $\rho$ remains positive.

Our result is expressed in the following.

\begin{thm}\label{thm_main}
If the initial data $(\rho_0, u_0, H_0)$ satisfy that \begin{equation}\label{initial data}
\begin{split}
& 0<(\rho_0, p(\rho_0))\in W^{2,q}(\mathbb{T}^2)\times  W^{2,q}(\mathbb{T}^2), \\& (u_0, H_0)\in H^2(\mathbb{T}^2)\times H^2(\mathbb{T}^2),\  \int_{\mathbb{T}^2}\rho_0\mbox{d}x>0.
\end{split}
\end{equation}
Then there exists a unique classical global solution
to the 2D MHD system \eqref{eq_EP_1}--\eqref{isentropic} and satisfies
\begin{equation}
\begin{split}
&0<\rho\leq C,\ (\rho, p(\rho))\in C([0,T]; W^{2,q}(\mathbb{T}^2)),\ \rho_t\in C([0,T];L^q(\mathbb{T}^2)), \\& (u, H)\in C([0,T]; H^2(\mathbb{T}^2))\cap L^2([0,T]; H^3(\mathbb{T}^2)),\\
&(u_t, H_t)\in L^2([0,T]; H^1(\mathbb{T}^2)),\  (\sqrt{\rho}u_t, H_t)\in L^{\infty}([0,T]; L^2(\mathbb{T}^2)).
\end{split}
\end{equation}
\end{thm}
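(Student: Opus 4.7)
The plan is to combine classical local well-posedness with global a priori estimates obtained by extending the Vaigant--Kazhikhov scheme from \cite{Karzhikhov} to the magneto-hydrodynamic setting. Local existence and uniqueness of a classical solution on some maximal interval $[0,T^{*})$ under the regularity assumed in \eqref{initial data} is standard (via linearization and a fixed-point argument in the parabolic-hyperbolic coupled system), and comes with a continuation criterion expressed through $\sup_{t} \|\rho(t)\|_{L^{\infty}}$ together with lower bounds on $\rho$. Since the continuity equation preserves positivity along characteristics (once $\mathrm{div}\,u$ is integrable in time), the whole proof reduces to showing an \emph{a priori} upper bound $\rho\le C(T)$ on any finite interval $[0,T]\subset[0,T^{*})$ and then bootstrapping to the stated higher regularity.

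First I would record the basic energy identity: testing the momentum equation by $u$ and the induction equation by $H$, the Lorentz term $(\nabla\times H)\times H$ and the induction nonlinearity $\nabla\times(u\times H)$ cancel (using $\mathrm{div}\,H=0$), producing
\[
\frac{d}{dt}\!\int_{\mathbb{T}^{2}}\!\Big(\tfrac12\rho|u|^{2}+\tfrac{1}{\gamma-1}\rho^{\gamma}+\tfrac12|H|^{2}\Big)\,dx
+\!\int_{\mathbb{T}^{2}}\!\big(\mu|\nabla u|^{2}+(\mu+\rho^{\beta})(\mathrm{div}\,u)^{2}+\nu|\nabla H|^{2}\big)\,dx=0.
\]
This yields $\sqrt{\rho}\,u,\ \rho^{\gamma/2},\ H\in L^{\infty}_{t}L^{2}_{x}$, $\nabla u,\nabla H\in L^{2}_{t,x}$, and $\rho^{\beta/2}\mathrm{div}\,u\in L^{2}_{t,x}$. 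Mass conservation plus Jensen pins the average of $\rho$ away from zero, and as in \cite{Karzhikhov} provides a Poincaré-type control of $\rho^{\gamma}-\bar\rho^{\gamma}$.

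The heart of the argument is the upper bound on $\rho$, obtained through the effective viscous flux. I would define
\[
G=(2\mu+\rho^{\beta})\,\mathrm{div}\,u - p(\rho) - \tfrac12|H|^{2},
\]
so that the momentum equation becomes $\nabla G = \rho\dot u - \mathrm{div}(H\otimes H) + \mu\nabla^{\perp}\omega$ (modulo the magnetic pressure absorbed into $G$). The Lamé/elliptic structure then gives $L^{p}$ control of $G$ and of the vorticity in terms of $\rho\dot u$ and of the magnetic stress $H\otimes H$. Setting $\theta(\rho)=\log\rho+\tfrac{1}{\beta}\rho^{\beta}$ so that $(2\mu+\rho^{\beta})\,\mathrm{div}\,u = (2\mu+\rho^{\beta})(\dot\theta+u\cdot\nabla\theta-\text{stuff})$ after using the continuity equation, I would follow the Kazhikhov computation that transforms the pointwise equation for $\rho^{\beta}$ into an ODE along trajectories driven by $G+p+\tfrac12|H|^2$. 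The decisive gain is the condition $\beta>3$, which is precisely what is needed to beat the two-dimensional Sobolev scaling so that the magnetic contribution $\|H\|_{L^{4}}^{2}\lesssim \|H\|_{L^{2}}\|\nabla H\|_{L^{2}}$ is absorbed; this closes a Gronwall inequality for $\|\rho\|_{L^{\infty}}^{\beta-1}$ with integrable right-hand side.

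The main obstacle is precisely this closure, because the magnetic field enters both as a source in the momentum equation and as an unknown in a parabolic equation whose nonlinearity contains $u$. I would therefore interleave the density estimate with improved estimates for $H$: multiplying the induction equation by $|H|^{2}H$ and by $\Delta H$ and using $\mathrm{div}\,H=0$, one gets uniform $L^{\infty}_{t}L^{p}_{x}$ and $L^{2}_{t}H^{2}_{x}$ control of $H$ in terms of $\|\nabla u\|_{L^{2}_{t}L^{\infty}_{x}}$, which in turn is controlled by $\|G\|_{H^{1}}$, $\|H\|_{L^{\infty}}$ and logarithmic Sobolev corrections. Running these estimates simultaneously, with a Brezis--Gallouët/Beale--Kato--Majda type logarithmic inequality in 2D to control $\|\nabla u\|_{L^{\infty}}$, closes the loop and produces a pointwise bound on $\rho$.

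Once $\rho\in L^{\infty}$ is established, the remaining regularity is standard: differentiating the momentum equation in $t$ and testing with $u_{t}$, and differentiating the induction equation and testing with $H_{t}$, yields $\sqrt\rho u_{t},\,H_{t}\in L^{\infty}_{t}L^{2}_{x}$ and $\nabla u_{t},\,\nabla H_{t}\in L^{2}_{t,x}$, which by elliptic regularity upgrades $(u,H)$ to $L^{\infty}_{t}H^{2}_{x}\cap L^{2}_{t}H^{3}_{x}$. Finally, differentiating the continuity equation and $p(\rho)$-equation, and using the just-obtained $\|\nabla u\|_{L^{1}_{t}W^{1,\infty}_{x}}$ control, propagates the $W^{2,q}$ regularity of $\rho$ and $p(\rho)$. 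Uniqueness within this class follows from a standard $L^{2}$-energy argument on the difference of two solutions, using the coupling cancellation again.
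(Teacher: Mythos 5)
There is a genuine gap in the central step, the pointwise upper bound on $\rho$. You assert that the trajectory ODE for $\theta(\rho)$ driven by $G+p+\tfrac12|H|^2$ ``closes a Gronwall inequality for $\|\rho\|_{L^\infty}^{\beta-1}$ with integrable right-hand side,'' but you never explain how $G+\tfrac12|H|^2$ is controlled in $L^1_t L^\infty_x$, and that control is the whole difficulty. The Vaigant--Kazhikhov scheme (and this paper) cannot bound $\|\rho\|_{L^\infty}$ in one shot: one first proves $\sup_t\|\rho\|_{L^k}\le Ck^{2/(\beta-1)}$ for every $k$, by introducing the potentials $\Delta\xi=\mathrm{div}(\rho u)$ and $\Delta\eta=\mathrm{div}\,\mathrm{div}(\rho u\otimes u-H\otimes H)$ and running a Moser-type iteration on the transport equation for $\xi+\theta(\rho)$ with all constants tracked in $m$ (Lemma \ref{lem3.4}); then one proves the first-order estimates for $\omega$, $\mathrm{curl}\,H$ and $(F+\tfrac12|H|^2)$ via the quantities $Z(t),\varphi(t)$, where the polynomial growth $k^{2/(\beta-1)}$ is fed into interpolation with the choice $\varepsilon=2^{-m}$, and it is exactly here --- in making $Cm^{1+2/(\beta-1)}2^{-m(1-2/(\beta-1))}$ small --- that $\beta>3$ is used (Lemma \ref{lem3.5}); then the $H^1$ bounds on $(B,L)$ give $\nabla(F+\tfrac12|H|^2)\in L^2_tL^p_x$ and hence $\int_0^T\|F+\tfrac12|H|^2\|_{L^\infty}^3\,dt\le C$ (Lemmas \ref{lem3.6}--\ref{lem3.7}); only after that can the particle-path integration yield $\rho\le C$ (Lemma \ref{lem3.8}). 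Your explanation that $\beta>3$ is what allows the absorption of $\|H\|_{L^4}^2\lesssim\|H\|_{L^2}\|\nabla H\|_{L^2}$ misidentifies the role of the hypothesis: that 2D interpolation is free of $\beta$, whereas $\beta>3$ enters through the $k$-dependence of the density $L^k$ bounds.

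A second, related problem is circularity in your proposed interleaving. You want to control $H$ in $L^2_tH^2_x$ ``in terms of $\|\nabla u\|_{L^2_tL^\infty_x}$'' and to control $\|\nabla u\|_{L^\infty}$ by a Brezis--Gallou\"et/Beale--Kato--Majda inequality ``in terms of $\|G\|_{H^1}$'' \emph{before} the density bound. In the paper, $\|\nabla u\|_{L^\infty}\in L^2_t$ and the $H^1$ control of the flux-type quantities are only obtained after the upper (and lower) bounds on $\rho$ and the second-order estimates, because the elliptic estimates used there need bounded coefficients $2\mu+\lambda(\rho)$ and the bound $\|\rho(B,L)\|_p\lesssim\|\nabla(B,L)\|_2$ needs $\rho\in L^\infty$ (Lemmas \ref{lem3.9}, \ref{lem4.1}). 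As written, your loop assumes precisely the quantities it is supposed to produce. The remaining parts of your outline (energy identity, time-differentiated estimates for $u_t,H_t$, propagation of $W^{2,q}$ regularity of $\rho,p(\rho)$, uniqueness by an $L^2$ energy argument) do match the paper's Sections 3--5, but the proof is not complete without the two-tier density argument described above.
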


\begin{rem}
We did not make any effort to optimize the assumptions on the initial data or the function space that is
used in the analysis. The whole point is to construct global in time classical solutions.
\end{rem}

\textbf{Notation:} Throughout the paper, $C$ stands for a
"harmless " constant, which is independent of $m$, $t\in [0,T]$. We sometimes use the notation $C_{\alpha}$ for some generic constant depending only on $\alpha$. we use the notation $A\lesssim
B$ as an equivalent of $A\leq CB$. The notation $f_+$ means
that $f_+=\max\{0,f\}$. The notation $L^p(\mathbb{T}^2)$, $1\leq p\leq\infty$, stands for the usual Lebesgue spaces on $\mathbb{T}^2$ and $\|\cdot\|_p$ denotes its $L^p$ norm,  and without ambiguity, we write $\int f(x)\mbox{d}x$ instead of  $\int_{\mathbb{T}^2}f(x)\mbox{d}x$.

\section{Preliminaries}
First, we state some assertions that are used later.

\begin{lem}\label{Bernstein}
 For every function $u\in W_0^{1,m}(\mathbb{T}^2)$ or $u\in W^{1,m}(\mathbb{T}^2)$ with $\int_{\mathbb{T}^2}u\mbox{d}x=0$, it holds that
\begin{align*} \|u\|_{q}\leq C \|\nabla u\|_{m}^\theta\|u\|_{r}^{1-\theta},
\end{align*}
where $\theta=(\frac{1}{r}-\frac{1}{q})(\frac{1}{r}-\frac{1}{m}+\frac{1}{2})^{-1}$, and if $m<2$, then $q$ is between $r$ and $\frac{2m}{2-m}$, that is, $q\in [r, \frac{2m}{2-m}]$ if $r<\frac{2m}{2-m}$, $q\in [\frac{2m}{2-m}, r]$  if $r\geq\frac{2m}{2-m}$, if $m=2$, then $q\in [r,+\infty)$, if $m>2$, then $q\in [r, +\infty]$. Consequently, for every function $u\in W^{1,m}(\mathbb{T}^2)$, one has
\begin{align*} \|u\|_{q}\leq C (\|u\|_1+\|\nabla u\|_{m}^\theta\|u\|_{r}^{1-\theta}),
\end{align*}
where $C$ is a constant which may depend on $q$.
\end{lem}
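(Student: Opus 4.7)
My plan is to derive this as a standard Gagliardo--Nirenberg inequality on the torus, by combining an endpoint Sobolev embedding with Hölder interpolation, and then handling the non-zero-mean case by subtracting off the average.

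First I would treat the mean-zero situation, which also covers the $W_0^{1,m}$ statement via the natural embedding. The key ingredient is the endpoint Sobolev-Poincar\'e inequality on $\mathbb{T}^2$: for $u$ with $\int u\,dx = 0$,
\[
\|u\|_s \leq C\|\nabla u\|_m, \qquad \frac{1}{s} = \frac{1}{m} - \frac{1}{2},
\]
when $m<2$; for $m=2$ this is replaced by $\|u\|_s \leq C(s)\|\nabla u\|_2$ valid for every finite $s$; and for $m>2$ one has Morrey's embedding $\|u\|_\infty \leq C\|\nabla u\|_m$. On the torus these follow from the corresponding $\R^2$ inequalities applied to a cut-off periodic extension (or via the Fourier series / heat-kernel representation of $(-\Delta)^{-1}$), together with Poincar\'e to control $\|u\|_m$ by $\|\nabla u\|_m$.

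Second, given the target exponent $q$ lying between $r$ and the endpoint $s$ above, I would interpolate by H\"older:
\[
\|u\|_q \leq \|u\|_s^{\alpha}\,\|u\|_r^{1-\alpha}, \qquad \frac{1}{q} = \frac{\alpha}{s} + \frac{1-\alpha}{r}.
\]
Inserting the Sobolev bound yields $\|u\|_q \leq C\|\nabla u\|_m^{\alpha}\|u\|_r^{1-\alpha}$, and a direct computation using $\frac{1}{s} = \frac{1}{m}-\frac{1}{2}$ identifies $\alpha$ with the stated exponent
\[
\theta = \Bigl(\tfrac{1}{r}-\tfrac{1}{q}\Bigr)\Bigl(\tfrac{1}{r}-\tfrac{1}{m}+\tfrac{1}{2}\Bigr)^{-1}.
\]
In the critical case $m=2$ one chooses any finite $s>q$ and the formula persists as a limiting statement; in the supercritical case $m>2$ one takes $s=\infty$, which permits $q=\infty$ as well. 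The admissible range for $q$ in each regime matches the endpoint of the Sobolev embedding, giving exactly the stated intervals.

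Finally, for a general $u\in W^{1,m}(\mathbb{T}^2)$ without mean-zero, write $u = \bar u + (u-\bar u)$ with $\bar u = \int u\,dx$. Since $|\mathbb{T}^2|=1$, the constant part satisfies $\|\bar u\|_q = |\bar u|\leq \|u\|_1$, and the mean-zero part $u-\bar u$ obeys the inequality already proved with $\|\nabla(u-\bar u)\|_m = \|\nabla u\|_m$ and $\|u-\bar u\|_r \leq \|u\|_r + \|u\|_1 \lesssim \|u\|_1 + \|u\|_r$. Combining these two pieces yields the consequent form $\|u\|_q \leq C(\|u\|_1 + \|\nabla u\|_m^{\theta}\|u\|_r^{1-\theta})$. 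I expect no serious obstacle: the only mild care is uniform treatment of the three regimes $m<2$, $m=2$, $m>2$ when selecting $s$, and the bookkeeping of the exponent $\theta$ in the critical case.
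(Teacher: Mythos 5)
Your reduction to the mean-zero case, the final decomposition $u=\bar u+(u-\bar u)$, and the subcritical case $m<2$ are fine: there the Sobolev--Poincar\'e endpoint $\frac1s=\frac1m-\frac12$ combined with H\"older interpolation does reproduce exactly the stated exponent $\theta$. (For the record, the paper does not prove this lemma at all; it cites Novotny--Stra\v{s}kraba and Vaigant--Kazhikhov, so any complete argument is already more than the paper offers.)

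However, for $m\geq 2$ your scheme does not prove the stated inequality, and this is a genuine gap rather than bookkeeping. The identification of the H\"older exponent $\alpha$ with $\theta$ uses precisely $\frac1s=\frac1m-\frac12$; when $m>2$ you take $s=\infty$, and when $m=2$ you take a finite $s$, and in both cases $\frac1s\neq\frac1m-\frac12$, so the computation gives $\alpha=\bigl(\frac1r-\frac1q\bigr)\bigl(\frac1r-\frac1s\bigr)^{-1}$, which is strictly larger than $\theta$. The resulting bound $\|u\|_q\leq C\|\nabla u\|_m^{\alpha}\|u\|_r^{1-\alpha}$ is strictly weaker: since for mean-zero $u$ Poincar\'e only gives $\|u\|_r\lesssim\|\nabla u\|_m$ (the wrong direction), one cannot trade the surplus power $(\|\nabla u\|_m/\|u\|_r)^{\alpha-\theta}$, which is unbounded for highly oscillatory $u$, so your inequality does not imply the lemma. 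The proposed rescue at $m=2$, ``choose any finite $s>q$ and pass to the limit,'' also fails, because the constant in $\|u\|_s\leq C(s)\|\nabla u\|_2$ blows up as $s\to\infty$ (like $s^{1/2}$; this is exactly the content of the paper's Lemma \ref{lem2}), so no limiting statement with the exact exponent $\theta$ comes out of H\"older alone. To obtain the sharp exponents for $m\geq2$ one needs a genuine Gagliardo--Nirenberg argument --- e.g.\ Nirenberg's original proof, or applying the subcritical estimate (or the $W^{1,1}\hookrightarrow L^2$ embedding) to powers $|u|^{\kappa}$ and using the chain rule and H\"older before optimizing --- or simply a citation to the references the paper invokes. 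Note also that at $m>2$, $q=\infty$ the claim with $\theta<1$ is strictly stronger than the Morrey bound you rely on, so this case in particular cannot follow from your two ingredients.
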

The above Lemma is the Gagliardo-Nirenberg inequality which can be found in \cite{Novotny, Karzhikhov}. The following Lemma is the Poincare inequality.

\begin{lem}\cite{Yudovich, Pokhozhaev, Talenti} \label{lem2}
For every function $u\in W_0^{1,m}(\mathbb{T}^2)$ or $u\in W^{1,m}(\mathbb{T}^2)$ with $\int_{\mathbb{T}^2}u\mbox{d}x=0$, if $1\leq m<2$, then
\begin{align*} \|u\|_{\frac{2m}{2-m}}\leq C(2-m)^{-\frac{1}{2}} \|\nabla u\|_{m},
\end{align*}
where the positive constant $C$ is independent of $m$.
\end{lem}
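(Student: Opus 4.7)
The inequality is a quantitative refinement of the Sobolev embedding $W^{1,m}(\mathbb{T}^2)\hookrightarrow L^{2m/(2-m)}(\mathbb{T}^2)$ in which one must track how the constant blows up as $m\to 2^{-}$. I would deduce it from Talenti's (and Pokhozhaev's) sharp Sobolev inequality on $\mathbb{R}^2$, then transfer the estimate to the torus.

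\emph{Step 1: sharp Sobolev on $\mathbb{R}^2$.} For $1<m<2$ and any $v\in W^{1,m}(\mathbb{R}^2)$ with the required decay, Talenti gives
\[
\|v\|_{L^{2m/(2-m)}(\mathbb{R}^2)}\le S(2,m)\|\nabla v\|_{L^m(\mathbb{R}^2)},
\]
with the explicit constant
\[
S(2,m)=\pi^{-1/2}2^{-1/m}\left(\frac{m-1}{2-m}\right)^{(m-1)/m}\bigl[\Gamma(2/m)\,\Gamma(3-2/m)\bigr]^{-1/2}.
\]
I would write $(2-m)^{-(m-1)/m}=(2-m)^{-1/2}(2-m)^{(2-m)/(2m)}$; the second factor is bounded for $m\in[1,2)$, while the $\Gamma$-ratio is continuous and non-vanishing at $m=2$, hence uniformly bounded on $[1,2]$. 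Thus $S(2,m)\le C(2-m)^{-1/2}$ with $C$ independent of $m$; the boundary case $m=1$ is handled by the classical isoperimetric/BV form of the Sobolev inequality, whose constant is finite.

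\emph{Step 2: passage to $\mathbb{T}^2$.} For $u\in W_0^{1,m}(\mathbb{T}^2)$, interpreting $\mathbb{T}^2$ as the unit square with zero trace, the extension by zero to $\mathbb{R}^2$ preserves the $L^{2m/(2-m)}$ and $\|\nabla\cdot\|_{L^m}$ norms, so Talenti applies verbatim. For $u\in W^{1,m}(\mathbb{T}^2)$ with zero mean, I would fix an extension operator $E:W^{1,m}(\mathbb{T}^2)\to W^{1,m}_{\mathrm{comp}}(\mathbb{R}^2)$ with operator norm bounded uniformly for $m\in[1,2]$ (e.g.\ periodic reflection composed with a smooth cut-off, whose bound depends only on the cut-off). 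Then Step 1 yields
\[
\|u\|_{L^{2m/(2-m)}(\mathbb{T}^2)}\le\|Eu\|_{L^{2m/(2-m)}(\mathbb{R}^2)}\le C\,S(2,m)\bigl(\|\nabla u\|_{m}+\|u\|_{m}\bigr),
\]
and the mean-zero Poincaré inequality on $\mathbb{T}^2$, with constant uniform in $m\in[1,2]$, absorbs the $\|u\|_{m}$ term; combining with Step 1 gives the claimed bound.

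\emph{Main obstacle.} The delicate point is not the computation of Talenti's constant but the transfer step: I must ensure that no additional singularity in $2-m$ is smuggled in. Concretely, one has to verify uniform-in-$m$ bounds for the extension operator and for the mean-zero Poincaré constant on $\mathbb{T}^2$, so that the only singular factor surviving in the final estimate is $(2-m)^{-(m-1)/m}\le C(2-m)^{-1/2}$ coming from $S(2,m)$. Both uniformity claims are standard but require explicit verification rather than a soft argument, because the endpoint $m=2$ is precisely where the embedding fails.
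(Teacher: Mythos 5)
Your proposal is correct. The paper gives no proof of this lemma at all: it is stated as a known result with citations to Yudovich, Pokhozhaev and Talenti, which are exactly the sources your argument rests on, so in substance you are reconstructing the intended proof rather than diverging from it. Your bookkeeping of Talenti's constant is right: with $p^{*}=2m/(2-m)$ the singular factor is $\bigl(\tfrac{m-1}{2-m}\bigr)^{(m-1)/m}$, and the splitting $(2-m)^{-(m-1)/m}=(2-m)^{-1/2}(2-m)^{(2-m)/(2m)}$ together with $0<2-m\le 1$ shows the residual factor is $\le 1$, while the $\Gamma$-ratio stays bounded on $[1,2]$; this is genuinely needed, since the elementary Gagliardo--Nirenberg--Sobolev proof only yields the weaker rate $(2-m)^{-1}$. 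The transfer step is also sound: a fixed cut-off of the periodic extension gives an extension operator with $m$-independent norm, and the mean-zero Poincar\'e constant on $\mathbb{T}^2$ is uniform for $m\in[1,2]$ (e.g.\ via the Riesz-potential representation of $u-\bar u$ and Young's inequality), so no extra singularity in $2-m$ enters. The only cosmetic caveat is the paper's notation $W_0^{1,m}(\mathbb{T}^2)$, which should be read as the zero-trace space on the unit square, for which your extension-by-zero remark applies verbatim.
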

From Lemma \ref{lem2}, we can prove the following Lemma, of which proof can also be found in \cite{Karzhikhov}.

\begin{lem} \label{lem3}
For every function $u\in W^{1,\frac{2m}{m+\eta}}(\mathbb{T}^2)$ with $m\geq 2$ and $0<\eta\leq1$, we have
\begin{align*} \|u\|_{2m}\leq C (\|u\|_1+m^{\frac{1}{2}}\|u\|_{2(1-\varepsilon)}^s\|\nabla u\|_{\frac{2m}{m+\eta}}^{1-s}),
\end{align*}
where $\varepsilon\in [0, \frac{1}{2}]$, $s=\frac{(1-\varepsilon)(1-\eta)}{m-\eta(1-\varepsilon)}$ and the positive constant $C$ is independent of $m$.
\end{lem}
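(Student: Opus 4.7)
The plan is to apply Lemma \ref{lem2} to the zero-mean function $|u|-\overline{|u|}$ with the specific choice of exponent $p=\tfrac{2m}{m+\eta}$, and then combine with an $L^p$-interpolation to descend to the $L^{2m}$ norm. Since $\eta>0$ forces $p<2$ and $\nabla|u|=\mbox{sgn}(u)\nabla u$ a.e., Lemma \ref{lem2} applies; the easy identities $\tfrac{2p}{2-p}=\tfrac{2m}{\eta}$ and $(2-p)^{-1/2}=\sqrt{\tfrac{m+\eta}{2\eta}}\lesssim m^{1/2}$ (the $\eta$-dependence being absorbed into the allowed constant), together with $\overline{|u|}=\|u\|_1$ on $\mathbb{T}^2$, yield
\[
\|u\|_{2m/\eta}\le C\bigl(\|u\|_1+m^{1/2}\|\nabla u\|_{2m/(m+\eta)}\bigr).
\]

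For the interpolation step, the assumptions $m\ge 2$ and $\varepsilon\in[0,\tfrac12]$ ensure $2(1-\varepsilon)\le 2m\le 2m/\eta$, so Holder's inequality gives $\|u\|_{2m}\le\|u\|_{2(1-\varepsilon)}^s\|u\|_{2m/\eta}^{1-s}$ with the weight working out to precisely $s=\tfrac{(1-\varepsilon)(1-\eta)}{m-\eta(1-\varepsilon)}$. Substituting the previous bound and using the subadditivity $(a+b)^{1-s}\le a^{1-s}+b^{1-s}$ produces the desired main term $Cm^{1/2}\|u\|_{2(1-\varepsilon)}^s\|\nabla u\|_{2m/(m+\eta)}^{1-s}$ together with a residual of the form $C\|u\|_{2(1-\varepsilon)}^s\|u\|_1^{1-s}$.

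To dispose of this residual I would further interpolate $\|u\|_{2(1-\varepsilon)}$ between $\|u\|_1$ and $\|u\|_{2m}$ with weight $\alpha=\tfrac{m-(1-\varepsilon)}{(1-\varepsilon)(2m-1)}$, then apply Young's inequality with conjugate exponents $\tfrac{1}{1-s(1-\alpha)}$ and $\tfrac{1}{s(1-\alpha)}$, both finite for $m\ge 2$. A small algebraic identity makes the exponent of $\|u\|_1$ in the resulting bound equal to exactly $1$, so Young produces $\tfrac12\|u\|_{2m}+C\|u\|_1$, and the $\|u\|_{2m}$ term is absorbed into the left-hand side. The main obstacle is purely bookkeeping---verifying that $C$ is genuinely independent of $m$---which succeeds because Lemma \ref{lem2} furnishes the sharp $(2-p)^{-1/2}$ blow-up that at our chosen $p$ is exactly of order $m^{1/2}$, and because the Young exponents collapse so that $\|u\|_1$ appears linearly rather than to some $m$-dependent power.
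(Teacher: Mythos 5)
Your Hölder step is set up correctly: interpolating between $L^{2(1-\varepsilon)}$ and $L^{2m/\eta}$ does give exactly the weight $s=\frac{(1-\varepsilon)(1-\eta)}{m-\eta(1-\varepsilon)}$. The gap is in the first step, where you apply Lemma \ref{lem2} to $|u|-\|u\|_1$ at the exponent $p=\frac{2m}{m+\eta}$ and then assert $(2-p)^{-1/2}=\sqrt{\frac{m+\eta}{2\eta}}\lesssim m^{1/2}$ ``with the $\eta$-dependence absorbed into the allowed constant''. That absorption is not permissible: the constant in this lemma must be uniform in $\eta$ (and $\varepsilon$) as well as in $m$ — the paper restates it in \eqref{wf17} with $C$ independent of $m$ and $\varepsilon$ and then applies it with $\eta=\varepsilon=2^{-m}$, so an $\eta$-dependent constant is an $m$-dependent constant. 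Your route genuinely produces $C\sqrt{m/\eta}$, and the subsequent interpolation does not repair it, since $1-s=\frac{m-(1-\varepsilon)}{m-\eta(1-\varepsilon)}$ is close to $1$, so after raising to the power $1-s$ the prefactor is still of order $(m/\eta)^{1/2}$. With $\eta=2^{-m}$ this is an extra factor $2^{m/2}$; once it gets squared in the Young inequalities of Lemma \ref{lem3.5}, the coefficient $m(\frac m\varepsilon)^{\frac{2}{\beta-1}}$ in \eqref{wf16jj} becomes roughly $\frac m\varepsilon(\frac m\varepsilon)^{\frac{2}{\beta-1}}$ and the smallness condition \eqref{wf3.65} fails for every $\beta$. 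So the sharp $m^{1/2}$ (rather than $(m/\eta)^{1/2}$) is precisely the content of the lemma, and a direct application of Lemma \ref{lem2} at $p=\frac{2m}{m+\eta}$ cannot deliver it. (There is also a smaller slip in the residual step: the interpolation exponent for $\|u\|_{2(1-\varepsilon)}$ between $\|u\|_1$ and $\|u\|_{2m}$ should be $\frac{m(1-2\varepsilon)}{(1-\varepsilon)(2m-1)}$, not $\frac{m-(1-\varepsilon)}{(1-\varepsilon)(2m-1)}$.)

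The loss is avoided — and this is, in essence, the Vaigant--Kazhikhov argument to which the paper defers for the proof — by applying Lemma \ref{lem2} not to $u$ but to the power $v=|u|^{\alpha}$ (minus its mean) with $\alpha=\frac{m-\eta(1-\varepsilon)}{m-(1-\varepsilon)}\in(1,2]$ and Sobolev exponent $p=\frac{2m}{m+\alpha}$, so that $\frac{2p}{2-p}=\frac{2m}{\alpha}$ and $(2-p)^{-1/2}=\sqrt{\frac{m+\alpha}{2\alpha}}\le C m^{1/2}$ uniformly in $\eta$. Then $\|\nabla v\|_p\le\alpha\,\||u|^{\alpha-1}\|_{\frac{2m}{\alpha-\eta}}\|\nabla u\|_{\frac{2m}{m+\eta}}$ by Hölder, and the choice of $\alpha$ is exactly what makes $\frac{2m(\alpha-1)}{\alpha-\eta}=2(1-\varepsilon)$, $\frac{\alpha-1}{\alpha}=s$ and $\frac1\alpha=1-s$; taking the $\alpha$-th root yields $\|u\|_{2m}\le C\big(\|u\|_{\alpha}+m^{\frac{1}{2\alpha}}\|u\|_{2(1-\varepsilon)}^{s}\|\nabla u\|_{\frac{2m}{m+\eta}}^{1-s}\big)$ with $C$ independent of $m$, $\eta$, $\varepsilon$, and the lower-order term $\|u\|_{\alpha}$ is then disposed of by the same interpolation-and-absorption device you propose. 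If you rework your write-up along these lines, keeping explicit track of how every constant depends on $\eta$ and $\varepsilon$, the proof goes through.
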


By virtue of the maximum principle, we can prove the following lemma. For brevity, we state it here without proof.

\begin{lem} \label{lem_phase}
If the initial datum $|H_0|\leq C$, and the function $H$ satisfies
\begin{align*}
H_t+u\cdot\nabla H-\nu \Delta H-H\cdot\nabla u+H\mbox{div}u=0,
\end{align*}
then $|H|\leq C$, with $C$ a positive constant.
\end{lem}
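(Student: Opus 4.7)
The plan is to reduce the vector equation to a scalar parabolic inequality for $\phi:=|H|^2$ and then run a standard maximum principle. First I would take the dot product of the given equation with $H$ and use $H\cdot\Delta H=\tfrac12\Delta|H|^2-|\nabla H|^2$ to derive
\begin{equation*}
\phi_t+u\cdot\nabla\phi-\nu\Delta\phi+2\nu|\nabla H|^2=2(H\cdot\nabla u)\cdot H-2|H|^2\,\mbox{div}\,u.
\end{equation*}
The dissipation term on the left is non-negative and I would discard it, while the right-hand side is pointwise controlled by $C|\nabla u|\,\phi$, leaving the convection--diffusion inequality $\phi_t+u\cdot\nabla\phi-\nu\Delta\phi\le C|\nabla u|\,\phi$.

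From here the plan is to apply the parabolic maximum principle on $\mathbb{T}^2$: at a spatial maximum of $\phi(t,\cdot)$ the gradient vanishes and the Laplacian is non-positive, which reduces the inequality to $\frac{d}{dt}\max_x\phi\le C\|\nabla u(t)\|_\infty\max_x\phi$. Gronwall then delivers
\begin{equation*}
\|H(t)\|_\infty\le\|H_0\|_\infty\exp\Bigl(C\int_0^t\|\nabla u(s)\|_\infty\,ds\Bigr),
\end{equation*}
which is the desired pointwise bound. If one prefers to avoid evaluating at a pointwise spatial maximum (e.g.\ to sidestep regularity of the max envelope), I would instead test the original equation against $|H|^{p-2}H$, integrate by parts to isolate a non-negative viscous contribution, use the same bound on the coupling term $H\cdot\nabla u-H\,\mbox{div}\,u$, and let $p\to\infty$; the resulting estimate is identical.

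The main obstacle is not the parabolic step itself but the implicit hypothesis hidden in the conclusion: one needs time-integrability of a Lipschitz-type norm of $u$, for instance $\int_0^T\|\nabla u\|_\infty\,dt<\infty$, for the exponential factor to be finite. In the setting of Theorem~\ref{thm_main} this control is not built into the statement of the lemma, but it is delivered by the a priori regularity estimates on $u$ that the paper establishes separately; the constant $C$ in $|H|\le C$ is therefore allowed to depend on $T$ and on those a priori bounds, consistent with the paper's convention that $C$ is a harmless constant on $[0,T]$.
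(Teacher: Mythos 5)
Your argument is correct and is essentially the proof the paper has in mind: the paper states this lemma without proof, appealing only to the maximum principle, and your reduction to the scalar parabolic inequality for $|H|^2$ (or the $L^p$ energy argument with $p\to\infty$) followed by the maximum principle and Gronwall is exactly the standard way to carry that out. Your closing caveat is also well placed: the bound genuinely requires a Lipschitz-type control such as $\int_0^T\|\nabla u\|_{\infty}\,\mbox{d}t<\infty$, so the constant $C$ necessarily depends on $u$ through that quantity, which in this paper is only established later (Lemma \ref{lem4.1}), even though the $L^\infty$ bound on $H$ is already invoked in the proof of Lemma \ref{lem3.5}.
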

 Second, we introduce the following notations:
 \begin{align}
 F=(2\mu+\lambda(\rho))\mbox{div}u-p(\rho)-\frac{1}{2}|H|^2, \ w=\partial_{x_1}u_2-\partial_{x_2}u_1,\\
 B=\frac{1}{\rho}(\mu w_{x_1}+(F+\frac{1}{2}|H|^2)_{x_2}), \
 L=\frac{1}{\rho}(-\mu w_{x_2}+(F+\frac{1}{2}|H|^2)_{x_1}).
 \end{align}

Using them and recalling that $\mu=\mbox{const}$, from system \eqref{eq_EP_1}, we get
\begin{align*}
u_{1t}+u\cdot\nabla u_1-\frac{1}{\rho}H\cdot\nabla H_1=\frac{1}{\rho}(-\mu \omega_{x_2}+F_{x_1})=L-\frac{1}{2\rho}|H|^2_{x_1},\\
u_{2t}+u\cdot\nabla u_2-\frac{1}{\rho}H\cdot\nabla H_2=\frac{1}{\rho}(\mu \omega_{x_1}+F_{x_2})=B-\frac{1}{2\rho}|H|^2_{x_2}.
\end{align*}
Then $\omega$ and $F$ solve the following system:
\begin{align}\label{omega and f}
\begin{cases}
\omega_t+u\cdot\nabla\omega+\omega\mbox{div}u+(\frac{1}{\rho}H\cdot\nabla H_1)_{x_2}-(\frac{1}{\rho}H\cdot\nabla H_1)_{x_1}\\ =(B-\frac{1}{2\rho}|H|^2_{x_2})_{x_1}-(L-\frac{1}{2\rho}|H|^2_{x_1})_{x_2},\\
(F+\frac{1}{2}|H|^2)_t+u\cdot\nabla (F+\frac{1}{2}|H|^2)+(2\mu+\lambda(\rho))[(u_{1x_1})^2+2u_{1x_2}u_{2x_1}\\+(u_{2x_2})^2]-\rho(2\mu+\lambda(\rho))
[(F+\frac{1}{2}|H|^2)(\frac{1}{2\mu+\lambda(\rho)})'
+(\frac{p}{2\mu+\lambda(\rho)})']\mbox{div}u\\
-\frac{2\mu+\lambda(\rho)}{\rho}[(H_{1x_1})^2+2H_{1x_2}H_{2x_1}+(H_{2x_2})^2]
-(2\mu+\lambda(\rho))\\ [(\frac{1}{\rho})_{x_1} H \cdot \nabla H_1+(\frac{1}{\rho})_{x_2}H\cdot \nabla H_2]\\ =(2\mu+\lambda(\rho))[(B-\frac{1}{2\rho}|H|^2_{x_2})_{x_2}+(L-\frac{1}{2\rho}|H|^2_{x_1})_{x_1}].
\end{cases}
\end{align}
Using the form of the functions $B(x,t)$ and $L(x,t)$ and the continuity equation, together with the magnetic equation, the system for $(B, L, H)$ can be derived as \begin{align}\label{BL}
\begin{cases}
\rho B_t+\rho u\cdot\nabla B-\rho B\mbox{div}u+u_{x_2}\cdot\nabla(F+\frac{1}{2}|H|^2)+\mu u_{x_1}\nabla\omega+\mu[\omega\mbox{div}u\\+(\frac{1}{\rho}H\cdot\nabla H_1)_{x_2}-(\frac{1}{\rho}H\cdot\nabla H_1)_{x_1}]_{x_1}+\{(2\mu+\lambda(\rho))[(u_{1x_1})^2+2u_{1x_2}u_{2x_1}\\
+(u_{2x_2})^2]\}_{x_2}-\{\frac{2\mu+\lambda(\rho)}{\rho}[(H_{1x_1})^2+2H_{1x_2}H_{2x_1}+(H_{2x_2})^2]\}_{x_2}\\ -\{\rho(2\mu+\lambda(\rho))
[(F+\frac{1}{2}|H|^2)(\frac{1}{2\mu+\lambda(\rho)})'
+(\frac{p}{2\mu+\lambda(\rho)})']\mbox{div}u\}_{x_2}\\-\{(2\mu+\lambda(\rho)) [(\frac{1}{\rho})_{x_1} H \cdot \nabla H_1+(\frac{1}{\rho})_{x_2}H\cdot \nabla H_2]\}_{}x_2\\ =\mu\{(B-\frac{1}{2\rho}|H|^2_{x_2})_{x_1}-(L-\frac{1}{2\rho}|H|^2_{x_1})_{x_2}\}_{x_1}
+\{(2\mu+\lambda(\rho))\\ [(B-\frac{1}{2\rho}|H|^2_{x_2})_{x_2}+(L-\frac{1}{2\rho}|H|^2_{x_1})_{x_1}]\}_{x_2},\\
\rho L_t+\rho u\cdot\nabla L-\rho L\mbox{div}u+u_{x_1}\cdot\nabla(F+\frac{1}{2}|H|^2)-\mu u_{x_2}\nabla\omega-\mu[\omega\mbox{div}u\\+(\frac{1}{\rho}H\cdot\nabla H_1)_{x_2}-(\frac{1}{\rho}H\cdot\nabla H_1)_{x_1}]_{x_2}+\{(2\mu+\lambda(\rho))[(u_{1x_1})^2+2u_{1x_2}u_{2x_1}\\
+(u_{2x_2})^2]\}_{x_1}-\{\frac{2\mu+\lambda(\rho)}{\rho}[(H_{1x_1})^2+2H_{1x_2}H_{2x_1}+(H_{2x_2})^2]\}_{x_1}\\ -\{\rho(2\mu+\lambda(\rho))
[(F+\frac{1}{2}|H|^2)(\frac{1}{2\mu+\lambda(\rho)})'
+(\frac{p}{2\mu+\lambda(\rho)})']\mbox{div}u\}_{x_1}\\-\{(2\mu+\lambda(\rho)) [(\frac{1}{\rho})_{x_1} H \cdot \nabla H_1+(\frac{1}{\rho})_{x_2}H\cdot \nabla H_2]\}_{x_1}\\ =-\mu\{(B-\frac{1}{2\rho}|H|^2_{x_2})_{x_1}-(L-\frac{1}{2\rho}|H|^2_{x_1})_{x_2}\}_{x_2}
+\{(2\mu+\lambda(\rho))\\ [(B-\frac{1}{2\rho}|H|^2_{x_2})_{x_2}+(L-\frac{1}{2\rho}|H|^2_{x_1})_{x_1}]\}_{x_1},\\
H_t+u\cdot\nabla H-\nu\Delta H-H\cdot\nabla u+H\mbox{div}u=0.
\end{cases}
\end{align}
These equations are equivalent to each other for the smooth solutions to the original system \eqref{eq_EP_1}. In the following, we will use the above system in different steps.

\section{ Lower order estimates}
In this section, we derive some uniform a-priori estimates.
Using the method in \cite{Solonnikov, Tani}, we can prove the existence and uniqueness for classical solutions on a sufficiently small time interval. In what follows, we will study the global problem is connected with obtaining a priori estimates with constants only on the data of the problem and duration $T$ of the time interval and independent of the interval of existence of a local solution. Then we can extend this solution globally. We divide the proof of the low order estimates into several steps.

Step 1. Elementary energy estimates

 \begin{lem}\label{lem3.1}
 There exists a positive constant $C$ depending on $(\rho_0, u_0, H_0)$, such that
 \begin{align}\label{elementary estimate}
 \begin{split}
 &\sup_{t\in[0,T]}(\|\sqrt{\rho}u\|_2^2+\|\rho\|_{\gamma}^{\gamma}+\|\rho\|_1+\|H\|_2^2)
 +\int_0^T(\|\omega\|_2^2+\|\nabla u\|_2^2+\|\nabla H\|_2^2\\&
 +\|(2\mu+\lambda(\rho))^{\frac{1}{2}}\mbox{div}u\|_2^2)\mbox{d}t\leq C.
 \end{split}
 \end{align}
 \end{lem}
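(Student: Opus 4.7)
The plan is to combine the usual kinetic, potential and magnetic energy balances and exploit the well known cancellation between the Lorentz force $(\nabla\times H)\times H$ in the momentum equation and the induction term $\nabla\times(u\times H)$ in the magnetic equation. Mass conservation is immediate: integrating the continuity equation on $\mathbb{T}^2$ yields $\|\rho(t)\|_1=\|\rho_0\|_1$, which takes care of the $\|\rho\|_1$ term.

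Next I would multiply the momentum equation by $u$ and integrate by parts on $\mathbb{T}^2$. Using the continuity equation in the standard form $\tfrac12\partial_t(\rho|u|^2)+\tfrac12\mbox{div}(\rho u|u|^2)$ gives the kinetic energy time derivative. For the pressure term I would use the identity
\begin{align*}
-\int p(\rho)\,\mbox{div}u\,\mbox{d}x=\frac{d}{dt}\int\frac{\rho^{\gamma}}{\gamma-1}\,\mbox{d}x,
\end{align*}
which follows from $(p(\rho))_t+\mbox{div}(p(\rho)u)+(\gamma-1)p(\rho)\mbox{div}u=0$ and thus controls $\|\rho\|_\gamma^\gamma$. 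The viscous part, after integration by parts, produces $\mu\|\nabla u\|_2^2+\int(\mu+\lambda(\rho))|\mbox{div}u|^2\mbox{d}x$, which dominates both $\|\nabla u\|_2^2$ and $\|(2\mu+\lambda(\rho))^{1/2}\mbox{div}u\|_2^2$. In parallel I would test the magnetic equation against $H$; the dissipative term gives $\nu\|\nabla H\|_2^2$ and the advection term $u\cdot\nabla H$ combined with $H\,\mbox{div}u$ and $-H\cdot\nabla u$ (after writing $\nabla\times(u\times H)$ in this form and using $\mbox{div}H=0$) produces the kinetic magnetic balance $\tfrac12\tfrac{d}{dt}\|H\|_2^2$ plus the coupling.

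The delicate point — and the one step I would treat most carefully — is the cancellation of the coupling. The Lorentz term on the right of the momentum equation contributes $\int ((\nabla\times H)\times H)\cdot u\,\mbox{d}x$, while the induction term contributes $\int \nabla\times(u\times H)\cdot H\,\mbox{d}x$. Using $\mbox{div}H=0$ together with integration by parts on the torus (no boundary terms), these two quantities cancel: this is the classical MHD energy identity and is precisely what makes a combined $L^2$ estimate work. Summing the two balances and integrating in time then yields
\begin{align*}
&\sup_{t\in[0,T]}\bigl(\|\sqrt{\rho}u\|_2^2+\|\rho\|_\gamma^\gamma+\|H\|_2^2\bigr)\\
&\qquad+\int_0^T\bigl(\|\nabla u\|_2^2+\|(2\mu+\lambda(\rho))^{1/2}\mbox{div}u\|_2^2+\|\nabla H\|_2^2\bigr)\mbox{d}t\leq C,
\end{align*}
with $C$ depending only on the initial data.

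Finally, to recover control of $\|\omega\|_2^2$ in the dissipation, I would use the torus identity $\int(\omega^2+(\mbox{div}u)^2)\mbox{d}x=\int|\nabla u|^2\mbox{d}x$ (obtained by expanding both sides and noting $\int\partial_1u_1\partial_2u_2\,\mbox{d}x=\int\partial_2u_1\partial_1u_2\,\mbox{d}x$ after integrating by parts twice, with no boundary contribution), so $\|\omega\|_2^2\leq\|\nabla u\|_2^2$, which is already controlled. Combining with mass conservation gives the full claim. The main obstacle is really only the coupling cancellation; everything else is the compressible Navier–Stokes energy estimate of \cite{Karzhikhov} applied essentially verbatim.
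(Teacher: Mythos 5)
Your proposal is correct and follows essentially the same route as the paper: testing the momentum equation with $u$ and the induction equation with $H$, using the cancellation $u\cdot(\nabla\times H\times H)+H\cdot(\nabla\times(u\times H))=\mbox{div}((u\times H)\times H)$, the pressure identity $\int u\cdot\nabla p\,\mbox{d}x=\frac{d}{dt}\int\frac{\rho^{\gamma}}{\gamma-1}\mbox{d}x$, and mass conservation. The only (immaterial) difference is that the paper rewrites $\Delta u=\nabla\mbox{div}u-\nabla\times\omega$ to obtain the dissipation directly as $\mu\omega^{2}+(2\mu+\lambda(\rho))(\mbox{div}u)^{2}$ and then invokes $\|\nabla u\|_{2}\leq C(\|\omega\|_{2}+\|\mbox{div}u\|_{2})$, whereas you keep $\mu|\nabla u|^{2}+(\mu+\lambda(\rho))(\mbox{div}u)^{2}$ and use the equivalent torus identity $\int|\nabla u|^{2}\mbox{d}x=\int(\omega^{2}+(\mbox{div}u)^{2})\mbox{d}x$.
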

 \begin{proof}
 Multiplying the momentum equation and the magnetic equation by $u$ and $H$ respectively, integrating over $\mathbb{T}^2$, and then summing the resulting equations, from the continuity equation it holds that
 \begin{align}\label{elementary estimate1}
 \begin{split}
 &\frac{\mbox{d}}{\mbox{d}t}\int(\rho|u|^2+|H|^2)\mbox{d}x+\int(\mu\omega^2+(2\mu+\lambda(\rho))(\mbox{div}u)^2+\nu\|\nabla H\|_2^2)\mbox{d}x\\
 &+\int u\cdot\nabla p\mbox{d}x=0,
 \end{split}
 \end{align}
 where we have used the fact that
 \begin{align*}
 \Delta u=\nabla\mbox{div}u-\nabla\times\omega,
 \end{align*}
 and
  \begin{align*} u\cdot(\nabla\times H\times H)+H\cdot (\nabla\times(u\times H))=\mbox{div}((u\times H)\times H).
 \end{align*}
Note that
\begin{align}\label{gamma}
\int u\cdot \nabla p\mbox{d}x=\frac{\mbox{d}}{\mbox{d}t}\int\frac{\rho^{\gamma}}{\gamma-1}\mbox{d}x.
\end{align}

Integrating the continuity equation, we have
\begin{align}
\frac{\mbox{d}}{\mbox{d}t}\int\rho\mbox{d}x=0,
\end{align}
 which together with \eqref{elementary estimate1} and \eqref{gamma}, gives that
  \begin{align*}
 \begin{split}
 &\frac{\mbox{d}}{\mbox{d}t}\int(\rho|u|^2+|H|^2+\rho+
 \frac{\rho^{\gamma}}{\gamma-1})\mbox{d}x+\int(\mu\omega^2+(2\mu+\lambda(\rho))(\mbox{div}u)^2+\nu\|\nabla H\|_2^2)\mbox{d}x=0,
 \end{split}
 \end{align*}
 Integrating the above equality in the time variable $t$ over $[0,T]$, and using the fact $\|\nabla u\|_2\leq C(\|\omega\|_2+\|\mbox{div}u\|_2)$, it follows that
  \begin{align*}
 \begin{split}
 &\sup_{t\in[0,T]}(\|\sqrt{\rho}u\|_2^2+\|\rho\|_{\gamma}^{\gamma}+\|\rho\|_1+\|H\|_2^2)
 +\int_0^T(\|\omega\|_2^2+\|\nabla u\|_2^2+\|\nabla H\|_2^2\\&
 +\|(2\mu+\lambda(\rho))^{\frac{1}{2}}\mbox{div}u\|_2^2)\mbox{d}t\leq C.
 \end{split}
 \end{align*}
 \end{proof}
 Step 2. Density estimate

 Applying the operator $\mbox{div}$ to the momentum equation of \eqref{eq_EP_1}, we have
 \begin{align}\label{elliptic system}
 [\mbox{div}(\rho u)]_t+\mbox{div}[\mbox{div}(\rho u\otimes u-H\otimes H)]=\Delta F.
 \end{align}
 Consider the following two elliptic problems:
 \begin{align}\label{elliptic system 1}
 \Delta \xi=\mbox{div}(\rho u),\  \int \xi\mbox{d}x=0,
 \end{align}
 \begin{align}\label{elliptic system 2}
 \Delta \eta=\mbox{div}[\mbox{div}(\rho u\otimes u-H\otimes H)],\  \int\eta \mbox{d}x=0,
 \end{align}
 both with the periodic boundary condition on the torus $\mathbb{T}^2$.

 From the elliptic estimates and H\"{o}lder inequality, it can be derived as that
 \begin{lem}\label{lem3.2}
 (1) $ \|\nabla \xi\|_{2m} \leq Cm\|\rho\|_{\frac{2mk}{k-1}}\|u\|_{2mk}$, for any $k>1$, $m\geq 1$;\\
 (2) $ \|\nabla \xi\|_{2-r} \leq C\|\rho\|^{\frac{1}{2}}_{\frac{2-r}{r}}\|\sqrt{\rho}u\|_{2}$, for any $0<r<1$;\\
 (3) $ \|\eta\|_{2m} \leq Cm\|\rho\|_{\frac{2mk}{k-1}}\|u\|_{4mk}^2+Cm\|H\|_{4m}^2$, for any $k>1$, $m\geq 1$;\\
 where $C$ is a positive constant independent of $m$, $k$ and $r$.
 \end{lem}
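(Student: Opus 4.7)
The plan is to treat all three estimates as consequences of the standard Calder\'on--Zygmund theory for the periodic Laplacian, combined with H\"older's inequality. Since $\xi$ and $\eta$ have zero mean and solve Poisson equations on $\mathbb{T}^2$, we can represent $\nabla\xi=\nabla\Delta^{-1}\mathrm{div}(\rho u)$ and $\eta=\Delta^{-1}\partial_i\partial_j(\rho u_iu_j-H_iH_j)$, so both operators amount to compositions of Riesz transforms applied to the source terms. The classical fact we invoke is that for every $1<p<\infty$ the Riesz transform is bounded on $L^p(\mathbb{T}^2)$ with operator norm $\le Cp$ for $p\ge 2$; this linear growth in $p$ is precisely what produces the factor $m$ appearing on the right-hand sides of (1) and (3).

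For part (1), I would apply the $L^{2m}$ Calder\'on--Zygmund bound with $p=2m$ to obtain $\|\nabla\xi\|_{2m}\le Cm\|\rho u\|_{2m}$, and then split $\rho u$ by H\"older with exponents $\frac{2mk}{k-1}$ and $2mk$, which satisfy the required reciprocal identity $\frac{k-1}{2mk}+\frac{1}{2mk}=\frac{1}{2m}$. Part (3) is parallel: since $\Delta^{-1}\partial_i\partial_j$ is bounded on $L^{2m}$ with the same $Cm$ norm, one gets $\|\eta\|_{2m}\le Cm(\|\rho u\otimes u\|_{2m}+\|H\otimes H\|_{2m})$, and the first factor is controlled by H\"older as $\|\rho\|_{\frac{2mk}{k-1}}\|u\|_{4mk}^2$ while the second is simply $\|H\|_{4m}^2$.

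For part (2) the exponent $p=2-r\in(1,2)$ lies in the stable range of the Riesz transform, so the Calder\'on--Zygmund step $\|\nabla\xi\|_{2-r}\le C\|\rho u\|_{2-r}$ is immediate (with a constant that does not blow up as long as $r$ stays bounded away from $1$, matching the statement of the lemma). The trick here is to factor $\rho u=\sqrt{\rho}\cdot\sqrt{\rho}\,u$ and apply H\"older with exponents $\frac{2(2-r)}{r}$ and $2$; since $\frac{r}{2(2-r)}+\frac{1}{2}=\frac{1}{2-r}$, this yields $\|\rho u\|_{2-r}\le\|\sqrt{\rho}\|_{\frac{2(2-r)}{r}}\|\sqrt{\rho}u\|_2=\|\rho\|_{\frac{2-r}{r}}^{1/2}\|\sqrt{\rho}u\|_2$, which is exactly the desired bound.

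The only conceptual point that requires care is tracking the $L^p$-operator norm of the Riesz transform as $p\to\infty$; once that linear-in-$p$ bound is in hand the rest is bookkeeping with H\"older exponents. All of parts (1)--(3) are then uniform in their respective parameters, as needed for the later steps where $m$ will be sent to infinity in the density bound and the exponent $r$ will be chosen to close the energy estimates.
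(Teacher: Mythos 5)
Your proposal is correct and follows essentially the same route as the paper: elliptic (Calder\'on--Zygmund) estimates for $\nabla\Delta^{-1}\mathrm{div}$ and $\Delta^{-1}\partial_i\partial_j$ on $\mathbb{T}^2$ with operator norm growing linearly in the integrability exponent, followed by H\"older splittings with exactly the exponents the paper uses. Your caveat in part (2) that the constant is uniform only for $r$ bounded away from $1$ is, if anything, more careful than the paper's bare statement, and it is harmless here since the lemma is applied with $2-r=\frac{2m}{m+1}$, i.e.\ $r\le\frac{2}{3}$.
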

 \begin{proof}
 By the elliptic estimates to the equation \eqref{elliptic system 2} and using the H\"{o}lder inequality, we have for any $k>1$, $m\geq1$,
 \begin{align*}
 \|\eta\|_{2m} \leq Cm\|\rho u^2\|_{2m}+Cm\|H^2\|_{2m}\l\hat{}eq Cm\|\rho\|_{\frac{2mk}{k-1}}\|u\|_{4mk}^2+Cm\|H\|_{4m}^2.
 \end{align*}
the statements (2) and (3) can be proved in the same way as (1).
 \end{proof}
 Denote \begin{align}\label{psi} \phi(t)=\int(\mu\omega^2+(2\mu+\lambda(\rho))(\mbox{div}u)^2+\nu\|\nabla H\|_2^2)\mbox{d}x,
  \end{align}
  and recall Lemmas \ref{Bernstein}-\ref{lem3} and Lemma \ref{lem3.2}, the following lemma holds.
 \begin{lem}\label{lem3.3}
 (1) $ \| \xi\|_{2m} \leq Cm^{\frac{1}{2}}\|\nabla\xi\|_{\frac{2m}{m+1}}\leq Cm^{\frac{1}{2}}\|\rho\|_{m}^{\frac{1}{2}}$, for any $m\geq2$;\\
 (2) $ \|u\|_{2m} \leq C[m^{\frac{1}{2}}\phi(t)^{\frac{1}{2}}+1]$, for any $m\geq2$;\\
 (3) $ \|\nabla \xi\|_{2m} \leq C [m^{\frac{3}{2}}k^{\frac{1}{2}}\|\rho\|_{\frac{2mk}{k-1}}\phi(t)^{\frac{1}{2}}+m\|\rho\|_{\frac{2mk}{k-1}}]$, for any $k>1$, $m\geq 1$;\\
 (4) $ \|\eta\|_{2m} \leq C[m^2k\|\rho\|_{\frac{2mk}{k-1}}\phi(t)+m\|\rho\|_{\frac{2mk}{k-1}}+m^2\phi(t)+m]$, for any $k>1$, $m\geq 1$;\\
 where $C$ is a positive constant independent of $m$ and $k$.
 \end{lem}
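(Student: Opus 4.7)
The strategy is to chain the elliptic estimates of Lemma \ref{lem3.2} with the Poincar\'e and Gagliardo--Nirenberg inequalities of Lemmas \ref{lem2}--\ref{lem3}, and then insert the uniform energy bound \eqref{elementary estimate}. Parts (1) and (2) are independent; parts (3) and (4) then follow by plugging (2) into Lemma \ref{lem3.2}(1) and (3) respectively. For (1), I apply Lemma \ref{lem2} to the mean-zero function $\xi$ with exponent $p=\frac{2m}{m+1}\in[1,2)$; since $\frac{2p}{2-p}=2m$ and $2-p=\frac{2}{m+1}$, the Poincar\'e constant $(2-p)^{-1/2}\lesssim m^{1/2}$ yields the first inequality. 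Then Lemma \ref{lem3.2}(2) with $r=\frac{2}{m+1}$ (for which $\frac{2-r}{r}=m$) gives $\|\nabla\xi\|_{\frac{2m}{m+1}}\le C\|\rho\|_m^{1/2}\|\sqrt{\rho}\,u\|_2$, and $\|\sqrt{\rho}\,u\|_2\le C$ from \eqref{elementary estimate} finishes the second.

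For (2), I apply Lemma \ref{lem3} to $u$ with $\eta=1$ and $\varepsilon=0$, so that $s=0$ and the inequality reads $\|u\|_{2m}\le C(\|u\|_1+m^{1/2}\|\nabla u\|_{\frac{2m}{m+1}})$. Since $\frac{2m}{m+1}<2$ and $|\mathbb{T}^2|=1$, one has $\|\nabla u\|_{\frac{2m}{m+1}}\le C\|\nabla u\|_2\le C\phi(t)^{1/2}$. To handle $\|u\|_1$, write $u=\bar u+(u-\bar u)$: the oscillation is bounded by Poincar\'e--Wirtinger, $\|u-\bar u\|_1\le C\|\nabla u\|_2\le C\phi(t)^{1/2}$, and the mean is controlled by conservation of $\int\rho u\,\mbox{d}x$ (valid on $\mathbb{T}^2$ since the Lorentz force equals $\mbox{div}(H\otimes H)-\frac{1}{2}\nabla|H|^2$ when $\mbox{div}H=0$), together with $\int\rho_0\,\mbox{d}x>0$ and a H\"older estimate on $\int(\rho-\bar\rho)(u-\bar u)\,\mbox{d}x$. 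The resulting bound $\|u\|_1\le C(1+\phi(t)^{1/2})$ is absorbed into the ``$+1$'' of the stated estimate because $m\ge 1$.

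Parts (3) and (4) follow by direct substitution. For (3), Lemma \ref{lem3.2}(1) combined with (2) at the exponent $mk$ yields $\|\nabla\xi\|_{2m}\le Cm\|\rho\|_{\frac{2mk}{k-1}}((mk)^{1/2}\phi(t)^{1/2}+1)$, which is exactly the claim. For (4), Lemma \ref{lem3.2}(3) splits $\|\eta\|_{2m}$ into a $\|u\|_{4mk}^2$ piece and a $\|H\|_{4m}^2$ piece. The first is handled by (2) at exponent $2mk$, giving $\|u\|_{4mk}^2\le C(mk\phi(t)+1)$. For the second, applying Lemma \ref{lem3} to $H$ with $m\to 2m$ (same $\eta,\varepsilon$), together with $\|H\|_2\le C$ and $\|\nabla H\|_2\le C\phi(t)^{1/2}$ directly from \eqref{elementary estimate}, yields $\|H\|_{4m}^2\le C(1+m\phi(t))$. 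Assembling the two pieces reproduces the four terms in (4).

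The main technical point is keeping the powers of $m$ and $k$ sharp: the exponents $m^{1/2}$ in (1) and (2), $m^{3/2}k^{1/2}$ in (3), and $m^2k$ in (4) are exactly what the later high-integrability density estimates will need, so any unnecessary loss would propagate and break the closing argument. A secondary but genuine subtlety is the a priori $L^1$-control of $u$ used to absorb $\|u\|_1$ into the innocuous ``$+1$''; this rests on the hypothesis $\int\rho_0\,\mbox{d}x>0$ combined with momentum conservation on $\mathbb{T}^2$, which in turn relies on the exact-divergence structure of the MHD stress and Lorentz terms.
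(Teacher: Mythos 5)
Your proposal is correct and follows essentially the same route as the paper: chain Lemma \ref{lem3.2} with the Poincar\'e/Gagliardo--Nirenberg inequalities (Lemmas \ref{lem2}--\ref{lem3}) using the exponents $\frac{2m}{m+1}$ and $r=\frac{2}{m+1}$, control the mean of $u$ through conservation of $\int\rho\,\mbox{d}x$ and $\int\rho u\,\mbox{d}x$ together with $\int\rho_0\,\mbox{d}x>0$ and a H\"older pairing of $\rho$ with $u-\bar u$, then obtain (3) and (4) by substituting (2) (at exponents $mk$, $2mk$) and the analogous bound for $\|H\|_{4m}$ into Lemma \ref{lem3.2}(1),(3). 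The only additions are harmless: you make explicit the divergence structure justifying momentum conservation, which the paper simply asserts.
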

 \begin{proof}

 (1) From Lemma \ref{lem2}, Lemma \ref{elementary estimate} and Lemma \ref{lem3.2}(2), clearly
  \begin{align*}
  \| \xi\|_{2m} \leq Cm^{\frac{1}{2}}\|\nabla\xi\|_{\frac{2m}{m+1}}\leq Cm^{\frac{1}{2}}\|\rho\|_{m}^{\frac{1}{2}}\|\sqrt{\rho}u\|_2\leq C m^{\frac{1}{2}}\|\rho\|_{m}^{\frac{1}{2}}.
 \end{align*}

(2) By Lemma \ref{lem3}, we get
\begin{align}\label{density3.14}
  \| u\|_{2m} \leq C(\|u\|_1+m^{\frac{1}{2}}\|\nabla u\|_{\frac{2m}{m+1}}).
 \end{align}
 Denote $\bar{u}=\|u\|_1=\int u\mbox{d}x$, then
 \begin{align}\label{density3.15}
 |\int\rho (u-\bar{u})\mbox{d}x|\leq \|\rho\|_{\gamma}\|u-\bar{u}\|_{\frac{\gamma}{\gamma-1}}\leq C\|\nabla u\|_2.
 \end{align}
 On the other hand, from the conservative form of the compressible MHD equations \eqref{eq_EP_1} and the periodic boundary conditions, we obtain
 \begin{align*}
 \frac{\mbox{d}}{\mbox{d}t}\int \rho(x,t)\mbox{d}x= \frac{\mbox{d}}{\mbox{d}t}\int \rho u(x,t)\mbox{d}x=0,
 \end{align*}
 that is,
  \begin{align*}
\int \rho(x,t)\mbox{d}x=\int \rho_0(x)\mbox{d}x, \ \int \rho u(x,t)\mbox{d}x=\int \rho_0u_0(x)\mbox{d}x,
 \end{align*}
 for any $t\in [0,T]$.
 Thus,
 \begin{align*}
 |\int\rho(u-\bar{u})\mbox{d}x|= |\int\rho_0u_0\mbox{d}x-\bar{u}\int \rho_0(x)\mbox{d}x|\geq |\bar{u}|\int \rho_0(x)\mbox{d}x-|\int \rho_0u_0(x)\mbox{d}x|,
 \end{align*}
 which together with \eqref{density3.15} implies that
 \begin{align*}
 |\bar{u}|\leq\frac{\int \rho_0u_0(x)\mbox{d}x}{\int \rho_0(x)\mbox{d}x}+\frac{C\|\nabla u\|_2}{\int \rho_0(x)\mbox{d}x}.
 \end{align*}
Substituting the above inequality into \eqref{density3.14} completes the proof of Lemma \ref{lem3.3}(2).

(3) By Lemma (\ref{lem3.2})(1) and Lemma \ref{lem3.3}(2), we can show that
\begin{align*}
&\|\nabla \xi\|_{2m} \leq Cm \|\rho\|_{\frac{2mk}{k-1}}\|u\|_{2mk}\leq Cm \|\rho\|_{\frac{2mk}{k-1}}(m^{\frac{1}{2}}k^{\frac12}\|\nabla u\|_2+1)\\&
\leq C [m^{\frac{3}{2}}k^{\frac{1}{2}}\|\rho\|_{\frac{2mk}{k-1}}\phi(t)^{\frac{1}{2}}+m\|\rho\|_{\frac{2mk}{k-1}}].
\end{align*}

(4) From Lemma (\ref{lem3.2})(3) and Lemma \ref{lem3.3}(2), we have
\begin{align*}
 \|\eta\|_{2m} &\leq Cm\|\rho\|_{\frac{2mk}{k-1}}\|u\|_{4mk}^2+Cm\|H\|_{4m}^2 \\&\leq Cm \|\rho\|_{\frac{2mk}{k-1}}(mk\|\nabla u\|_2^2+1)+Cm^2\|\nabla H\|_2^2+Cm\\
 &\leq C[m^2k\|\rho\|_{\frac{2mk}{k-1}}\phi(t)+m\|\rho\|_{\frac{2mk}{k-1}}+m^2\phi(t)+m].
\end{align*}
 \end{proof}
Combining \eqref{elliptic system 1}-\eqref{elliptic system 2} with \eqref{elliptic system}, we obtain
\begin{align*}
\Delta(\xi_t+\eta-F+\int F(x,t)\mbox{d}x)=0.
\end{align*}
Thus,
\begin{align*}
\xi_t+\eta-F+\int F(x,t)\mbox{d}x=0.
\end{align*}
Since $F=(2\mu+\lambda(\rho))\mbox{div}u-p-\frac12|H|^2$, the preceding equality implies that
\begin{align*}
\xi_t-(2\mu+\lambda(\rho))\mbox{div}u+p+\frac12|H|^2+\eta+\int F(x,t)\mbox{d}x=0.
\end{align*}
Using the continuity equation, one gets
\begin{align}\label{xi1}
\xi_t+(2\mu+\lambda(\rho))\frac{1}{\rho}(\rho_t+u\cdot\nabla \rho)+p(\rho)+\frac12|H|^2+\eta+\int F(x,t)\mbox{d}x=0.
\end{align}
Define
\begin{align*}
\theta(\rho)=\int_1^{\rho}\frac{2\mu+\lambda(s)}{s}\mbox{d}s=2\mu\ln\rho+\frac{1}{\beta}(\rho^{\beta}-1),
\end{align*}
 then \eqref{xi1} yields the following transport equation
 \begin{align}\label{xi2}
(\xi+\theta(\rho))_t+u\cdot\nabla(\xi+\theta(\rho))+p(\rho)+\frac12|H|^2+\eta-u\cdot\nabla\xi+\int F(x,t)\mbox{d}x=0.
\end{align}
 \begin{lem}\label{lem3.4}
 For any $k\geq 1$, it holds that
 \begin{align*}
 \sup_{t\in[0,T]}\|\rho(\cdot, t)\|_k\leq Ck^{\frac{2}{\beta-1}}.
 \end{align*}
 \end{lem}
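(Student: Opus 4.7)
The plan is to exploit the transport equation \eqref{xi2} satisfied by $\xi+\theta(\rho)$ to extract $L^{2m}$ bounds, and then convert these into the claimed polynomial-in-$k$ estimate. Since $\theta(\rho)=2\mu\ln\rho+(\rho^{\beta}-1)/\beta$ behaves like $\rho^{\beta}/\beta$ as $\rho\to\infty$, while $\|\xi\|_{2m}$ is already controlled by Lemma \ref{lem3.3}(1), a good bound on $\|\xi+\theta(\rho)\|_{2m}$ will translate into a bound on $\|\rho\|_{2m\beta}$. With the identification $k=2m\beta$, I want every constant produced along the way to scale at most like $m^{2\beta/(\beta-1)}$, so that the final bound reads $\|\rho\|_k\leq Ck^{2/(\beta-1)}$.

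First I would multiply \eqref{xi2} by $2m(\xi+\theta(\rho))^{2m-1}$ and integrate over $\mathbb{T}^{2}$. After integrating the transport term by parts, this yields the identity
$$\frac{d}{dt}\|\xi+\theta(\rho)\|_{2m}^{2m}=\int(\xi+\theta)^{2m}\,\mathrm{div}\,u\,dx -2m\int(\xi+\theta)^{2m-1}\bigl[p(\rho)+\tfrac12|H|^{2}+\eta-u\cdot\nabla\xi+\overline{F}\bigr]dx,$$
with $\overline{F}=\int F\,dx$. Because $\mathrm{div}\,u=(F+p+|H|^{2}/2)/(2\mu+\lambda(\rho))$ and $\lambda(\rho)=\rho^{\beta}$ with $\beta>3>\gamma$, the $\mathrm{div}\,u$ term is of strictly lower order than the coercive contribution coming from $-2m\int(\xi+\theta)^{2m-1}p\,dx$; the latter controls $\|\rho\|_{(2m-1)\beta+\gamma}^{(2m-1)\beta+\gamma}$ up to harmless remainders, while the magnetic contribution $|H|^{2}$ is tamed by the uniform bound $|H|\leq C$ supplied by Lemma \ref{lem_phase}. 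Next I would bring in Lemma \ref{lem3.3} to bound $\|\eta\|_{2m}$, $\|\nabla\xi\|_{2m}$, and $\|u\|_{2m}$, splitting $(\xi+\theta)^{2m-1}$ by H\"older and then invoking Young's inequality to absorb a small multiple of $\|\theta\|_{2m}^{2m}$ back into the left-hand side. What remains involves only $\phi(t)$ and $\|\rho\|_{2mk/(k-1)}$, so a suitable choice of $k=k(m)$ feeds the previous iteration step into the next, and the time integral $\int_{0}^{T}\phi\,dt\leq C$ from Lemma \ref{lem3.1} makes the Gronwall argument close.

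The main obstacle is the bookkeeping of the sharp power of $m$ through the H\"older/Young chain, because Lemma \ref{lem3.3}(3)--(4) introduces factors $m^{2}$, $m^{3/2}$, and a free parameter $k$ that must be optimised so that the accumulated loss is no worse than $m^{4m\beta/(\beta-1)}$ in $\|\theta\|_{2m}^{2m}$. Taking $2m$-th roots and using $\theta(\rho)\sim\rho^{\beta}/\beta$ then gives $\|\rho\|_{2m\beta}\leq Cm^{2\beta/(\beta-1)}$; setting $k=2m\beta$ for $k\geq 2\beta$ and interpolating down to arbitrary $k\geq 1$ via the already-controlled $L^{1}$ bound from Lemma \ref{lem3.1} produces the advertised inequality $\sup_{t\in[0,T]}\|\rho(\cdot,t)\|_{k}\leq Ck^{2/(\beta-1)}$. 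The genuinely new difficulty compared with \cite{Karzhikhov} is keeping the magnetic contributions inside both $F$ and the source of \eqref{xi2} from polluting this scaling; the uniform bound $\|H\|_{\infty}\leq C$ together with the extra room allowed by $\beta>3$ is exactly what makes this work.
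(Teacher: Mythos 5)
Your overall strategy (power multipliers applied to the transport equation \eqref{xi2}, the bounds of Lemma \ref{lem3.3}, and a Gronwall argument using $\int_0^T\phi\,\mbox{d}t\leq C$) is the same as the paper's, but two ingredients you dropped are exactly what make the argument work, and without them your version does not close. The paper multiplies \eqref{xi2} by $\rho[(\xi+\theta(\rho))_+]^{2m-1}$, not by $2m(\xi+\theta)^{2m-1}$. The density weight, combined with the continuity equation, makes the transported quantity $\int\rho[(\xi+\theta)_+]^{2m}\mbox{d}x$ evolve with \emph{no} $\int(\xi+\theta)^{2m}\mbox{div}u\,\mbox{d}x$ term at all; in your unweighted identity that term survives, and your claim that it is ``strictly lower order'' because $\mbox{div}u=(F+p+\frac12|H|^2)/(2\mu+\lambda)$ is circular: no pointwise (or even $L^\infty_tL^2_x$) control of $F$ is available before Lemmas \ref{lem3.5} and \ref{lem3.7}, which are proved \emph{after} this lemma and use it. With only the energy bound of Lemma \ref{lem3.1}, estimating $\int(\xi+\theta)^{2m}\mbox{div}u\,\mbox{d}x$ by H\"older forces a strictly higher $L^p$ norm of $\xi+\theta$ than the one you are propagating, so the differential inequality does not close. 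The positive part is equally essential: at this stage there is no lower bound on $\rho$ (that only comes in Lemma \ref{lem3.9}), and $\theta(\rho)=2\mu\ln\rho+\frac1\beta(\rho^\beta-1)\to-\infty$ as $\rho\to0^+$. Without the truncation, your pressure term $-2m\int(\xi+\theta)^{2m-1}p\,\mbox{d}x$ is not sign-definite — on the set where $\xi+\theta<0$ it has the wrong sign — so it is not ``coercive'' and certainly does not control $\|\rho\|_{(2m-1)\beta+\gamma}^{(2m-1)\beta+\gamma}$; moreover the source terms $\eta$, $u\cdot\nabla\xi$, $\int F\,\mbox{d}x$ multiplied against the large negative part of $(\xi+\theta)^{2m-1}$ would have to be estimated too, and the analogue of $f(0)$ would involve $\|\ln\rho\|$-type quantities you cannot bound uniformly in $m$. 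In the paper the truncation makes the pressure and magnetic terms nonnegative (so they are simply discarded) and makes $f(0)\leq C_{\sigma,M}$ uniformly in $m$.

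Beyond these two structural points, the closing mechanism you describe is not the one that actually yields the sharp power $k^{\frac{2}{\beta-1}}$. The paper never extracts coercivity from the pressure term: it relates $\|\rho\|_{2m\beta+1}^{\beta}$ to $f(t)$ by splitting over the level sets $\Omega_1,\Omega_2$, absorbs the $\int\rho|\xi|^{2m}$ contribution with Lemma \ref{lem3.3}(1) and Young's inequality, and then normalizes $y(t)=m^{-\frac{2}{\beta-1}}\|\rho\|_{2m\beta+1}$ so that every term on the right collapses to $(\phi(s)+1)y^{\beta}(s)$ with $m$-independent constants, after which Gronwall gives $y\leq C$. This bookkeeping is the heart of the proof and is only gestured at in your outline; as written, your exponent tracking is also internally inconsistent (a bound $\|\theta\|_{2m}\lesssim m^{\frac{2\beta}{\beta-1}}$ gives $\|\rho\|_{2m\beta}\lesssim m^{\frac{2}{\beta-1}}$ after taking the $\beta$-th root, not $\|\rho\|_{2m\beta}\lesssim m^{\frac{2\beta}{\beta-1}}$, which would be incompatible with the claimed conclusion for $k=2m\beta$). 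Finally, the appeal to Lemma \ref{lem_phase} is unnecessary here: in the weighted, truncated version the $|H|^2$ term is nonnegative and is dropped, and only $\int|H|^2\mbox{d}x\leq C$ from Lemma \ref{lem3.1} is used.
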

 \begin{proof}
 Multiplying the equation \eqref{xi2} by the function $\rho[(\xi+\theta(\rho))_+]^{2m-1}$ with $m\geq 4$ a natural number, and integrating the result equation over $\Omega$, involving the continuity equation, we get
 \begin{align}\label{density eq1}
 \begin{split}
 &\frac{1}{2m}\frac{\mbox{d}}{\mbox{d}t}\int\rho[(\xi+\theta(\rho))_+]^{2m}\mbox{d}x+\int\rho p(\rho)[(\xi+\theta(\rho))_+]^{2m-1}\mbox{d}x\\
 &+\frac12\int\rho|H|^2[(\xi+\theta(\rho))_+]^{2m-1}\mbox{d}x\\
 &=-\int\rho\eta[(\xi+\theta(\rho))_+]^{2m-1}\mbox{d}x+\int\rho u\cdot\nabla \xi[(\xi+\theta(\rho))_+]^{2m-1}\mbox{d}x\\
 &-\int F(x,t)\mbox{d}x\int\rho[(\xi+\theta(\rho))_+]^{2m-1}\mbox{d}x.
 \end{split}
 \end{align}
 Put
 \begin{align*}
 f(t)=\{\int\rho[(\xi+\theta(\rho))_+]^{2m}\}^{\frac{1}{2m}},\ t\in [0,T].
 \end{align*}
 Now we estimate the terms on the right-hand side of \eqref{density eq1}.

 First of all,
 \begin{align}\label{density eq2}
 \begin{split}
 &|-\int\rho \eta[(\xi+\theta(\rho))_+]^{2m-1}\mbox{d}x|\\&\leq \int \rho^{\frac{1}{2m}}|\eta|[\rho(\xi+\theta(\rho))_+^{2m}]^{\frac{2m-1}{2m}}\mbox{d}x\\
 &\leq \|\rho\|_{2m\beta+1}^{\frac{1}{2m}}\|\eta\|_{2m+\frac{1}{\beta}}\|\rho(\xi+\theta(\rho))_+^{2m}\|_1^{\frac{2m-1}{2m}}\\
 & \leq C\|\rho\|_{2m\beta+1}^{\frac{1}{2m}}f(t)^{2m-1}[(m+\frac{1}{2\beta})^2k\|\rho\|_{\frac{2(m+\frac{1}{2\beta})k}{k-1}}\|\nabla u\|_2^2+m\|\rho\|_{\frac{2(m+\frac{1}{2\beta})k}{k-1}}\\&+m^2\|\nabla H\|_2^2+m]\\
 &\leq C\|\rho\|_{2m\beta+1}^{1+\frac{1}{2m}}f(t)^{2m-1}[m^2\phi(t)+m]+\|\rho\|_{2m\beta+1}^{\frac{1}{2m}}f(t)^{2m-1}[m^2\phi(t)+m],
 \end{split}
 \end{align}
 where $\phi$ is defined as \eqref{psi}, and we have chosen $k=\frac{\beta}{\beta-1}$.

Next, for $\frac{1}{2m\beta+1}+\frac{2}{p}=1$ with $p\geq1$, the second term on the right-hand side of \eqref{density eq1} can be estimated as
\begin{align}\label{density eq4}
\begin{split}
&|\int\rho u\cdot\nabla \xi[(\xi+\theta(\rho))_+]^{2m-1}\mbox{d}x|\\
&\leq\int\rho^{\frac{1}{2m}}|u||\nabla \xi|
[\rho(\xi+\theta(\rho))_+^{2m}]^{\frac{2m-1}{2m}}\mbox{d}x\\
&\leq C\|\rho\|_{2m\beta+1}^{\frac{1}{2m}}\|u\|_{2mp}\|\nabla\xi\|_{2mp}\|\rho(\xi+\theta(\rho))_+^{2m}\|_1^{\frac{2m-1}{2m}}\\
&\leq C\|\rho\|_{2m\beta+1}^{\frac{1}{2m}}[(mp)^{\frac12}\|\nabla u\|_2+1][(mq)^{\frac32}k^{\frac12}\|\rho\|_{\frac{2mqk}{k-1}}\phi(t)^{\frac12}\\
&+mq\|\rho\|_{\frac{2mqk}{k-1}}]f(t)^{2m-1}\\
&\leq C\|\rho\|_{2m\beta+1}^{1+\frac{1}{2m}}f(t)^{2m-1}[m^{\frac12}\phi^{\frac12}(t)+1][m^{\frac32}\phi(t)^{\frac12}+m]\\
&\leq C\|\rho\|_{2m\beta+1}^{1+\frac{1}{2m}}f(t)^{2m-1}[m^{2}\phi(t)+m],
\end{split}
\end{align}
where in the third inequality we have chosen $p=q=\frac{2m\beta+1}{m\beta}$ and $k=\frac{\beta}{\beta-2}$.

Now let's estimate the last term on the right-hand side of \eqref{density eq1}. From the elementary estimate \eqref{elementary estimate}, one can show that
\begin{align}\label{density eq5}
\begin{split}
&|-\int F(x,t)\mbox{d}x\int\rho[(\xi+\theta(\rho))_+]^{2m-1}\mbox{d}x|\\
&\leq \int |(2\mu+\lambda(\rho))\mbox{div}u-p-\frac12|H|^2|\mbox{d}x
\int\rho^{\frac{1}{2m}}[\rho(\xi+\theta(\rho))_+^{2m}]^{\frac{2m-1}{2m}}\mbox{d}x\\
&\leq \big[\big(\int(2\mu+\lambda(\rho))(\mbox{div}u)^2\mbox{d}x\big)^{\frac12}\big(\int(2\mu+\lambda(\rho))\mbox{d}x\big)^{\frac12}\big)+\int p(\rho)\mbox{d}x\\
&+\frac12\int |H|^2\mbox{d}x\big]\|\rho\|_1^{\frac{1}{2m}}\|\rho(\xi+\theta(\rho))_+^{2m}\|_1^{\frac{2m-1}{2m}}\\
&\leq C[\phi(t)^{\frac12}+\phi(t)^{\frac12}(\int\rho^{\beta}\mbox{d}x)^{\frac12}+1]f(t)^{2m-1}\\
&\leq C[\phi(t)^{\frac12}+\phi(t)^{\frac12}\|\rho\|_{2m\beta+1}^{\frac{\beta}{2}}+1]f(t)^{2m-1}.
\end{split}
\end{align}
 Plugging \eqref{density eq2}-\eqref{density eq5} into \eqref{density eq1} yields that
\begin{align*}
&\frac{1}{2m}\frac{\mbox{d}}{\mbox{d}t}(f^{2m}(t))+\int\rho p(\rho)[(\xi+\theta(\rho))_+]^{2m-1}\mbox{d}x\\
 &\leq C\|\rho\|_{2m\beta+1}^{1+\frac{1}{2m}}f(t)^{2m-1}[m^{2}\phi(t)+m]+C\|\rho\|_{2m\beta+1}^{\frac{1}{2m}}f(t)^{2m-1}[m^2\phi(t)+m]\\
 &+C\|\rho\|_{2m\beta+1}^{\frac{1}{2m}}[m^{\frac12}\phi^{\frac12}+1]f(t)^{2m-1}
 +C[\phi(t)^{\frac12}+\phi(t)^{\frac12}\|\rho\|_{2m\beta+1}^{\frac{\beta}{2}}+1]f(t)^{2m-1}.
\end{align*}
Then it holds that
\begin{align*}
&\frac{\mbox{d}}{\mbox{d}t}f(t)
\leq C[1+\phi(t)^{\frac12}+\phi(t)^{\frac12}\|\rho\|_{2m\beta+1}^{\frac{\beta}{2}}+(m^2\phi(t)+m)\|\rho\|_{2m\beta+1}^{1+\frac{1}{2m}}
 \\
 &+(m^2\phi(t)+m)\|\rho\|_{2m\beta+1}^{\frac{1}{2m}}+
 (m^{\frac12}\phi(t)^{\frac12}+1)\|\rho\|_{2m\beta+1}^{\frac{1}{2m}}].
\end{align*}
Integrating the above inequality over $[0,t]$ gives that
\begin{align}\label{density eq6}
&f(t)
\leq f(0)+ C[1+\int_0^t\phi(s)^{\frac12}\|\rho\|_{2m\beta+1}^{\frac{\beta}{2}}\mbox{d}s+\int_0^t(m^2\phi(s)+m)\|\rho\|_{2m\beta+1}^{1+\frac{1}{2m}}\mbox{d}s
 \\
 &+\int_0^t(m^2\phi(s)+m)\|\rho\|_{2m\beta+1}^{\frac{1}{2m}}\mbox{d}s+
 \int_0^t(m^{\frac12}\phi(s)^{\frac12}+1)\|\rho\|_{2m\beta+1}^{\frac{1}{2m}}\mbox{d}s].
\end{align}
Now we calculate the quantity
\begin{align*}
f(0)=(\int\rho_0[(\xi_0+\theta(\rho_0))_+]^{2m}\mbox{d}x)^{\frac{1}{2m}}.
\end{align*}

From Lemma \ref{lem3.2}(1), it is not difficulty to prove
\begin{align*}
\|\xi_0\|_{L^{\infty}}\leq C.
\end{align*}
Moreover, from the definition of $\theta(\rho_0)=2\mu\ln\rho_0+\frac{1}{\beta}((\rho_0)^{\beta}-1)$, one has
\begin{align*}
\xi_0+\theta(\rho_0)\rightarrow-\infty, \ \mbox{as}\ \rho_0\rightarrow 0^+.
\end{align*}
So there exists a positive constant $\sigma$, such that if $0<\rho_0\leq\sigma$, then
\begin{align*}
(\xi_0+\theta(\rho_0))_+\equiv0.
\end{align*}
Thus it holds that
\begin{align*}
&f(0)=[(\int_{[a\leq\rho_0\leq\sigma]}+\int_{[\sigma\leq\rho_0\leq M]})\rho_0(\xi_0+\theta(\rho_0))_+^{2m}\mbox{d}x]^{\frac{1}{2m}}\\
&=[\int_{[\sigma\leq\rho_0\leq M]}\rho_0(\xi_0+\theta(\rho_0))_+^{2m}\mbox{d}x]^{\frac{1}{2m}}\leq C_{\sigma,M},
\end{align*}
with $C_{\sigma,M}$ a positive constant independent of $m$, which together with \eqref{density eq6} leads to
\begin{align}\label{density eq7}
\begin{split}
&f(t)
\leq  C[1+\int_0^t\phi(s)^{\frac12}\|\rho\|_{2m\beta+1}^{\frac{\beta}{2}}\mbox{d}s+\int_0^t(m^2\phi(s)+m)\|\rho\|_{2m\beta+1}^{1+\frac{1}{2m}}\mbox{d}s
 \\
 &+\int_0^t(m^2\phi(s)+m)\|\rho\|_{2m\beta+1}^{\frac{1}{2m}}\mbox{d}s+
 \int_0^t(m^{\frac12}\phi(s)^{\frac12}+1)\|\rho\|_{2m\beta+1}^{\frac{1}{2m}}\mbox{d}s].
 \end{split}
\end{align}
Set $\Omega_1(t)=\{x\in \mathbb{T}^2|\rho(x,t)>2\}$ and $\Omega_2(t)=\{x\in\Omega_1(t)|\xi(x,t)+\theta(\rho)(x,t)>0\}$. Then one has
\begin{align*}
&\|\rho\|_{2m\beta+1}^{\beta}=(\int\rho^{2m\beta+1}\mbox{d}x)^{\frac{\beta}{2m\beta+1}}
=(\int_{\Omega_1(t)}\rho^{2m\beta+1}\mbox{d}x+\int_{\mathbb{T}^2/ \Omega_1(t)}\rho^{2m\beta+1}\mbox{d}x)^{\frac{\beta}{2m\beta+1}}\\
&\leq C(\int_{\Omega_1(t)}\rho^{2m\beta+1}\mbox{d}x)^{\frac{\beta}{2m\beta+1}}+C \leq C (\int_{\Omega_1(t)}\rho|\theta(\rho)|^{2m}\mbox{d}x)^{\frac{\beta}{2m\beta+1}} +C\\
&\leq C(\int_{\Omega_2(t)}\rho|\theta(\rho)+\xi-\xi|^{2m}\mbox{d}x
+\int_{\Omega_1(t)/\Omega_2(t)}\rho|\theta(\rho)|^{2m}\mbox{d}x)^{\frac{\beta}{2m\beta+1}} +C\\
&\leq C(\int_{\Omega_2(t)}\rho|\theta(\rho)+\xi|^{2m}\mbox{d}x+\int_{\Omega_2(t)}\rho|\xi|^{2m}\mbox{d}x
+\int_{\Omega_1(t)/\Omega_2(t)}\rho|\xi|^{2m}\mbox{d}x)^{\frac{\beta}{2m\beta+1}} +C\\
&\leq C(f(t)^{2m}+\int_{\mathbb{T}^2}\rho|\xi|^{2m}\mbox{d}x)^{\frac{\beta}{2m\beta+1}}+C\leq C[f(t)+(\int_{\mathbb{T}^2}\rho|\xi|^{2m}\mbox{d}x)^{\frac{\beta}{2m\beta+1}}+1].
\end{align*}
Notice that
\begin{align*}
(\int_{\mathbb{T}^2}\rho|\xi|^{2m}\mbox{d}x)^{\frac{\beta}{2m\beta+1}}&\lesssim \|\rho\|_{2m\beta+1}^{\frac{\beta}{2m\beta+1}}\||\xi|^{2m}\|_{\frac{2m\beta+1}{2m\beta}}^{\frac{\beta}{2m\beta+1}}
\lesssim \|\rho\|_{2m\beta+1}^{\frac{\beta}{2m\beta+1}}\|\xi\|_{2m+\frac{1}{\beta}}^{\frac{2m\beta}{2m\beta+1}}\\
&\lesssim \|\rho\|_{2m\beta+1}^{\frac{\beta}{2m\beta+1}}[(m+\frac{1}{2\beta})^{\frac12}
\|\rho\|_{m+\frac{1}{2\beta}}^{\frac12}]^{\frac{2m\beta}{2m\beta+1}}
\\
&\lesssim m^{\frac12}\|\rho\|_{2m\beta+1}^{\frac{\beta(m+1)}{2m\beta+1}}.
\end{align*}
Then from Young inequality, it follows that
\begin{align*}
&\|\rho\|_{2m\beta+1}^{\beta}=(\int\rho^{2m\beta+1}\mbox{d}x)^{\frac{\beta}{2m\beta+1}}
\leq C[1+f(t)+m^{\frac12}\|\rho\|_{2m\beta+1}^{\frac{\beta(m+1)}{2m\beta+1}}]\\
&\leq \frac12\|\rho\|_{2m\beta+1}^{\beta}+C(1+f(t)+m^{\frac{m\beta+\frac12}{m(2\beta-1)}}).
\end{align*}
Thus one can get
\begin{align*}
&\|\rho\|_{2m\beta+1}^{\beta}
\leq C[f(t)+m^{\frac{\beta}{2\beta-1}}]\\
&\leq C[m^{\frac{\beta}{2\beta-1}}+\int_0^t\phi(s)^{\frac12}\|\rho\|_{2m\beta+1}^{\frac{\beta}{2}}\mbox{d}s+\int_0^t(m^2\phi(s)+m)\|\rho\|_{2m\beta+1}^{1+\frac{1}{2m}}\mbox{d}s
 \\
 &+\int_0^t(m^2\phi(s)+m)\|\rho\|_{2m\beta+1}^{\frac{1}{2m}}\mbox{d}s+
 \int_0^t(m^{\frac12}\phi(s)^{\frac12}+1)\|\rho\|_{2m\beta+1}^{\frac{1}{2m}}\mbox{d}s]\\
 &\leq C[m^{\frac{\beta}{2\beta-1}}+\int_0^t\|\rho\|_{2m\beta+1}^{\beta}\mbox{d}s+\int_0^t(m^2\phi(s)+m)\|\rho\|_{2m\beta+1}^{1+\frac{1}{2m}}\mbox{d}s
 \\
 &+\int_0^t(m^2\phi(s)+m)\|\rho\|_{2m\beta+1}^{\frac{1}{2m}}\mbox{d}s+
 \int_0^t(m^{\frac12}\phi(s)^{\frac12}+1)\|\rho\|_{2m\beta+1}^{\frac{1}{2m}}\mbox{d}s].
\end{align*}
From Gronwall's inequality, it can be derived as
\begin{align*}
\|\rho\|_{2m\beta+1}^{\beta}
&\leq C[m^{\frac{\beta}{2\beta-1}}+\int_0^t(m^2\phi(s)+m)\|\rho\|_{2m\beta+1}^{1+\frac{1}{2m}}\mbox{d}s
 +\int_0^t(m^2\phi(s)+m)\|\rho\|_{2m\beta+1}^{\frac{1}{2m}}\mbox{d}s\\
 &+
 \int_0^t(m^{\frac12}\phi(s)^{\frac12}+1)\|\rho\|_{2m\beta+1}^{\frac{1}{2m}}\mbox{d}s].
\end{align*}
Put
\begin{align*}
y(t)=m^{-\frac{2}{\beta-1}}\|\rho\|_{2m\beta+1}(t),
\end{align*}
then
\begin{align*}
y^{\beta}(t)&\leq C[m^{\frac{\beta(1-3\beta)}{(2\beta-1)(\beta-1)}}
+m^{\frac{1}{m(\beta-1)}}\int_0^t\phi(s)y(s)^{1+\frac{1}{2m}}\mbox{d}s
\\
&+m^{\frac{1}{m(\beta-1)}-1}\int_0^ty(s)^{1+\frac{1}{2m}}\mbox{d}s
+m^{\frac{1-2m}{m(\beta-1)}}\int_0^t\phi(s)y(s)^{\frac{1}{2m}}\mbox{d}s
\\
&+m^{\frac{1-m-m\beta}{m(\beta-1)}}\int_0^ty(s)^{\frac{1}{2m}}\mbox{d}s
+m^{\frac{1-2m}{m(\beta-1)}-\frac32}\int_0^t\phi(s)^{\frac12}y(s)^{\frac{1}{2m}}\mbox{d}s
\\
&+m^{\frac{1-2m\beta}{m(\beta-1)}}\int_0^ty(s)^{\frac{1}{2m}}\mbox{d}s]\\
&\leq C[1+\int_0^t(\phi(s)+1)y^{\beta}(s)\mbox{d}s].
\end{align*}
Again applying Gronwall's inequality, we have
\begin{align*}
y(t)\leq C, \ \forall\  t\in [0,T],
\end{align*}
which implies
\begin{align*}
\|\rho\|_{2m\beta+1}(t)\leq C m^{\frac{2}{\beta-1}},\ \forall \ t\in [0,T].
\end{align*}
\end{proof}
Step 3. First-order derivative estimates of the velocity $u$ and the magnetic field $H$.

\begin{lem}\label{lem3.5}
There exists a positive constant $C$, such that
\begin{align*}
\sup_{t\in[0,T]}\int\mu \omega^2+(\mbox{curl H})^2+\frac{(F+\frac12|H|^2)^2}{2\mu+\lambda(\rho)}\mbox{d}x+\int_0^T\int \rho(B^2+L^2)+\nu |\nabla \mbox{curl}H|^2\mbox{d}x\mbox{d}t\leq C.
\end{align*}
\end{lem}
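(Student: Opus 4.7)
The plan is to establish a differential inequality for
\[
\Phi(t) := \int\Bigl[\mu\omega^2 + (\mbox{curl}\,H)^2 + \frac{(F+\tfrac12|H|^2)^2}{2\mu+\lambda(\rho)}\Bigr]dx,
\]
by testing the momentum equation of \eqref{eq_EP_1} against the material derivative $\dot u := u_t + (u\cdot\nabla)u$ and testing the magnetic equation against $-\Delta H$, then combining the resulting identities. The point of the first test is that, via the identity $\mu\Delta u + \nabla((\mu+\lambda(\rho))\mbox{div}\,u) = -\mu\nabla^{\perp}\omega + \nabla((2\mu+\lambda(\rho))\mbox{div}\,u)$ and the definition of $F$, the momentum equation reads
\[
\rho\dot u = -\mu\nabla^{\perp}\omega + \nabla\bigl(F+\tfrac12|H|^2\bigr) + (H\cdot\nabla)H - \tfrac12\nabla|H|^2.
\]
Comparing with the definitions of $B,L$ in \eqref{BL}, this identifies $\dot u = (L,B) + \rho^{-1}\bigl[(H\cdot\nabla)H - \tfrac12\nabla|H|^2\bigr]$, so controlling $\int\rho|\dot u|^2\,dx$ controls $\int\rho(B^2+L^2)\,dx$ up to magnetic cross-terms.

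Testing the momentum equation against $\dot u$ and integrating by parts, the $-\mu\nabla^{\perp}\omega$ piece pairs with the vorticity equation of \eqref{omega and f} to produce $\frac{d}{dt}\!\int\frac{\mu}{2}\omega^2\,dx$, while the $\nabla(F+\tfrac12|H|^2)$ piece pairs with the $F+\tfrac12|H|^2$ equation of \eqref{omega and f} (combined with the continuity equation to move the weight $1/(2\mu+\lambda(\rho))$ past $\partial_t$) to produce $\frac{d}{dt}\!\int\frac{(F+\frac12|H|^2)^2}{2(2\mu+\lambda(\rho))}\,dx$. Summing, I obtain schematically
\[
\int\rho|\dot u|^2\,dx + \tfrac12\tfrac{d}{dt}\!\int\!\Bigl[\mu\omega^2 + \tfrac{(F+\frac12|H|^2)^2}{2\mu+\lambda(\rho)}\Bigr]dx = R_1(t),
\]
where $R_1$ absorbs convective quadratic terms, magnetic couplings from $(H\cdot\nabla)H-\tfrac12\nabla|H|^2$, and terms arising from the $\rho$-dependence of $\lambda$.

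For the magnetic field, I test the $H$-equation of \eqref{BL} against $-\Delta H$. Since $\mbox{div}\,H=0$, one has $\|\nabla H\|_2=\|\mbox{curl}\,H\|_2$ and $\|\Delta H\|_2=\|\nabla\,\mbox{curl}\,H\|_2$, and
\[
\tfrac12\tfrac{d}{dt}\|\mbox{curl}\,H\|_2^2 + \nu\|\nabla\,\mbox{curl}\,H\|_2^2 = \int\Delta H\cdot\bigl[u\cdot\nabla H - (H\cdot\nabla)u + H\,\mbox{div}\,u\bigr]dx =: R_2(t).
\]
The uniform bound $|H|\le C$ from Lemma \ref{lem_phase}, the Gagliardo--Nirenberg inequality (Lemma \ref{Bernstein}), and the density bound of Lemma \ref{lem3.4} let me control $R_2$ by $\tfrac{\nu}{2}\|\nabla\,\mbox{curl}\,H\|_2^2 + C\phi(t)\Phi(t) + C\phi(t)$ with $\phi$ as in \eqref{psi}. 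The magnetic cross-terms appearing in $R_1$ (namely $\int\dot u\cdot[(H\cdot\nabla)H-\tfrac12\nabla|H|^2]\,dx$) are handled by moving derivatives off $\dot u$ onto $H$ via integration by parts and $\mbox{div}\,H=0$, then splitting by Cauchy--Schwarz so that a small portion is absorbed into $\tfrac12\int\rho|\dot u|^2\,dx$ and the remainder is controlled by $|H|\le C$ together with $\|\nabla H\|_2\in L^2_t$.

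The principal obstacle is the density-dependent viscosity $\lambda(\rho)=\rho^\beta$ with $\beta>3$: differentiating $(2\mu+\lambda(\rho))\mbox{div}\,u$ and $(F+\tfrac12|H|^2)^2/(2\mu+\lambda(\rho))$ in time produces factors $\lambda'(\rho)\rho_t = -\lambda'(\rho)\mbox{div}(\rho u)$ carrying high powers of $\rho$ against $\mbox{div}\,u$. These are estimated by pairing the density bound $\|\rho\|_k\le Ck^{2/(\beta-1)}$ (Lemma \ref{lem3.4}) with $\|\mbox{div}\,u\|_2\le C\phi(t)^{1/2}$ and Gagliardo--Nirenberg interpolations from Lemmas \ref{lem3.2}--\ref{lem3.3}; the restriction $\beta>3$ is precisely what makes the resulting exponent arithmetic close. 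Adding the two main identities and absorbing the good terms on the left yields a differential inequality of the form $\Phi'(t) + \int\rho(B^2+L^2)\,dx + \nu\|\nabla\,\mbox{curl}\,H\|_2^2 \le C\phi(t)\Phi(t) + g(t)$ with $g\in L^1([0,T])$ depending only on quantities controlled by Lemmas \ref{lem3.1} and \ref{lem3.4}. Gronwall's inequality together with $\int_0^T\phi(t)\,dt\le C$ from Lemma \ref{lem3.1} then concludes the proof.
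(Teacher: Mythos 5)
Your setup (testing the momentum equation against the material derivative and the magnetic equation against $-\Delta H$, identifying $\rho\dot u$ with $\rho(L,B)^t$ plus magnetic terms) is a legitimate reformulation of the paper's energy identity for $Z^2(t)=\int\mu\omega^2+(\mbox{curl}H)^2+\frac{(F+\frac12|H|^2)^2}{2\mu+\lambda(\rho)}\mbox{d}x$, and up to that point you and the paper are doing essentially the same thing. The genuine gap is in the closing step: you assert that after absorption one arrives at a \emph{linear} inequality $\Phi'(t)+\int\rho(B^2+L^2)\mbox{d}x+\nu\|\nabla\mbox{curl}H\|_2^2\leq C\phi(t)\Phi(t)+g(t)$ with $g\in L^1$, and then invoke Gronwall. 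This is not what the available estimates give. The right-hand side unavoidably contains cubic terms in the unknown — $\int\omega^2\mbox{div}u\,\mbox{d}x$, $\int(\mbox{curl}H)^2\mbox{div}u\,\mbox{d}x$, $\int\frac{|F+\frac12|H|^2|^3}{2\mu+\lambda(\rho)}\mbox{d}x$, $\int|F+\frac12|H|^2|\,|\nabla u|^2\mbox{d}x$, $\int|F+\frac12|H|^2|\,|\nabla H|^2\mbox{d}x$ — and the only dissipation, $\varphi^2(t)=\int\rho(B^2+L^2)+\nu|\nabla\mbox{curl}H|^2\mbox{d}x$, controls $\nabla(F+\frac12|H|^2,\omega)$ only in $L^{2(1-r)}$ with a constant blowing up like $r^{\frac{1}{1-\beta}}$ because of the density weight (via $\|\rho\|_k\leq Ck^{\frac{2}{\beta-1}}$). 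Interpolating these cubic terms therefore produces, after Young's inequality, a \emph{superlinear} bound of the form $\frac{\mbox{d}}{\mbox{d}t}Z^2+\varphi^2\leq C m(\frac{m}{\varepsilon})^{\frac{2}{\beta-1}}(1+Z^2)^{2+4\varepsilon}$ with $\varepsilon=2^{-m}$, to which Gronwall simply does not apply.

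The paper closes this not by Gronwall but by a nonlinear comparison: dividing by $(1+Z^2)^{1+4\varepsilon}$, using that $Z^2\in L^1(0,T)$ (from Lemmas \ref{lem3.1} and \ref{lem3.4}), and then choosing $m$ large so that $CM^{1+\frac{2}{\beta-1}}2^{-m(1-\frac{2}{\beta-1})}$ is small — this is exactly, and only, where $\beta>3$ (i.e.\ $1-\frac{2}{\beta-1}>0$) is used, not in the "exponent arithmetic" of the $\lambda'(\rho)\rho_t$ terms as you suggest. So your proposal misidentifies both the structure of the final differential inequality and the role of the hypothesis $\beta>3$; as written, the last paragraph of your argument would fail. (A minor additional omission: one must also verify $Z^2(0)\leq C$ from the initial data, which the paper does and you do not address.)
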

\begin{proof}
Multiplying the first equation and the second equation of \eqref{omega and f} by $\mu \omega$ and $\frac{(F+\frac12|H|^2)}{2\mu+\lambda(\rho)}$ respectively, and then summing the resulted equations together, one has
\begin{align}\label{wf1}
\begin{split}
&\frac12\frac{\mbox{d}}{\mbox{d}t}\int\mu\omega
+\frac{(F+\frac12|H|^2)^2}{2\mu+\lambda(\rho)}\mbox{d}x
+\int \rho(B^2+L^2)\mbox{d}x\\&=-\frac{\mu}{2}\int\omega^2\mbox{div}u\mbox{d}x
-(\frac{1}{\rho}H\cdot\nabla H_1)_{x_2}\cdot\omega+(\frac{1}{\rho}H\cdot\nabla H_1)_{x_1}\cdot\omega\mbox{d}x\\
&+\frac12\int(F+\frac12|H|^2)^2\mbox{div}u[\rho(\frac{1}{2\mu+\lambda(\rho)})'-\frac{1}{2\mu+\lambda(\rho)}]\mbox{d}x\\
&+\int(F+\frac12|H|^2)\mbox{div}u[\rho(\frac{p}{2\mu+\lambda(\rho)})'-\frac{p}{2\mu+\lambda(\rho)}]\mbox{d}x\\
&-\int2(F+\frac12|H|^2)(u_{1x_2}u_{2x_1}-u_{1x_1}u_{2x_2})\mbox{d}x\\
&-\int\frac{2}{\rho}(F+\frac12|H|^2)(H_{1x_2}H_{2x_1}-H_{1x_1}H_{2x_2})\mbox{d}x\\
&+\int(F+\frac12|H|^2)[(\frac{1}{\rho})_{x_1} H \cdot \nabla H_1+(\frac{1}{\rho})_{x_2}H\cdot \nabla H_2]\mbox{d}x\\
&+\int B(\frac12|H|^2)_{x_2}+L(\frac12|H|^2)_{x_1}\mbox{d}x,
\end{split}
\end{align}
where we have used the fact that
\begin{align*}
&(u_{1x_1})^2+2u_{1x_2}u_{2x_1}+(u_{2x_2})^2\\&=(u_{1x_1}+u_{2x_2})^2+2(u_{1x_2}u_{2x_1}-u_{1x_1}u_{2x_2})
\\&=(\mbox{div}u)^2+2(u_{1x_2}u_{2x_1}-u_{1x_1}u_{2x_2})\\&=\mbox{div}u(\frac{F+\frac12|H|^2+p(\rho)}{2\mu+\lambda(\rho)})
+2(u_{1x_2}u_{2x_1}-u_{1x_1}u_{2x_2})
\end{align*}
and
\begin{align*}
&(H_{1x_1})^2+2H_{1x_2}H_{2x_1}+(H_{2x_2})^2\\&=(H_{1x_1}+H_{2x_2})^2+2(H_{1x_2}H_{2x_1}-H_{1x_1}H_{2x_2})
\\&=2(H_{1x_2}H_{2x_1}-H_{1x_1}H_{2x_2}).
\end{align*}
Applying the operator $\mbox{curl}$ to the magnetic equation of \eqref{BL}, multiplying the resulting equation by $\mbox{curl}H$, then integrating it over $\mathbb{T}^2$, we have
\begin{align*}
&\frac12\frac{\mbox{d}}{\mbox{d}t}(\mbox{curl}H)^2\mbox{d}x+\nu\int|\nabla\mbox{curl}|^2\mbox{d}x
-\frac12\int(\mbox{curl}H)^2\mbox{div}u\mbox{d}x\\
&+\int u_{x_1}\cdot\nabla H_{x_2}\cdot\mbox{curl}H\mbox{d}x-\int u_{x_2}\cdot\nabla H_{x_1}\cdot\mbox{curl}H\mbox{d}x\\
&+\int H\cdot\nabla u_2+H_2\mbox{div}u(\mbox{curl}H)_{x_1}\mbox{d}x-\int H\cdot\nabla u_1+H_1\mbox{div}u(\mbox{curl}H)_{x_2}\mbox{d}x,
\end{align*}
which together with \eqref{wf1} gives that
\begin{align}\label{wf2}
\begin{split}
&\frac12\frac{\mbox{d}}{\mbox{d}t}\int\mu\omega
+(\mbox{curl}H)^2+\frac{(F+\frac12|H|^2)^2}{2\mu+\lambda(\rho)}\mbox{d}x
+\int \rho(B^2+L^2)+\nu|\nabla\mbox{curl}|^2\mbox{d}x\\&=-\frac{\mu}{2}\int\omega^2\mbox{div}u\mbox{d}x
-(\frac{1}{\rho}H\cdot\nabla H_1)_{x_2}\cdot\omega+(\frac{1}{\rho}H\cdot\nabla H_1)_{x_1}\cdot\omega\mbox{d}x\\
&+\frac12\int(F+\frac12|H|^2)^2\mbox{div}u[\rho(\frac{1}{2\mu+\lambda(\rho)})'-\frac{1}{2\mu+\lambda(\rho)}]\mbox{d}x\\
&+\int(F+\frac12|H|^2)\mbox{div}u[\rho(\frac{p}{2\mu+\lambda(\rho)})'-\frac{p}{2\mu+\lambda(\rho)}]\mbox{d}x\\
&-\int2(F+\frac12|H|^2)(u_{1x_2}u_{2x_1}-u_{1x_1}u_{2x_2})\mbox{d}x\\
&-\int\frac{2}{\rho}(F+\frac12|H|^2)(H_{1x_2}H_{2x_1}-H_{1x_1}H_{2x_2})\mbox{d}x\\
&+\int(F+\frac12|H|^2)[(\frac{1}{\rho})_{x_1} H \cdot \nabla H_1+(\frac{1}{\rho})_{x_2}H\cdot \nabla H_2]\mbox{d}x\\
&+\int B(\frac12|H|^2)_{x_2}+L(\frac12|H|^2)_{x_1}\mbox{d}x-\frac12\int(\mbox{curl}H)^2\mbox{div}u\mbox{d}x\\
&-\int u_{x_1}\cdot\nabla H_{x_2}\cdot\mbox{curl}H\mbox{d}x+\int u_{x_2}\cdot\nabla H_{x_1}\cdot\mbox{curl}H\mbox{d}x\\
&-\int H\cdot\nabla u_2+H_2\mbox{div}u(\mbox{curl}H)_{x_1}\mbox{d}x+\int H\cdot\nabla u_1+H_1\mbox{div}u(\mbox{curl}H)_{x_2}\mbox{d}x.
\end{split}
\end{align}
Denote
\begin{align}\label{wf3}
Z^2(t)=\int\mu\omega
+(\mbox{curl}H)^2+\frac{(F+\frac12|H|^2)^2}{2\mu+\lambda(\rho)}\mbox{d}x
\end{align}
and
\begin{align}\label{wf4}
\varphi^2(t)=\int \rho(B^2+L^2)+\nu|\nabla\mbox{curl}|^2\mbox{d}x.
\end{align}
Then it holds that for $0<r\leq\frac12$,
\begin{align}\label{wf5}
\begin{split}
\|\nabla (F+|H|^2,\omega, \mbox{curl}H)\|_{2(1-r)}&\leq C\varphi(t)\|\rho\|^{\frac12}_{\frac{1-r}{r}}\leq C\varphi(t)(\frac{1-r}{r})^{\frac{1}{\beta-1}}\\&\leq C\varphi(t)r^{\frac{1}{1-\beta}}
\end{split}
\end{align}
and
\begin{align}\label{wf6}
\begin{split}
\|\nabla u\|_{2}+\|\omega\|_{2}+\|(2\mu+\lambda(\rho))^{\frac12}\mbox{div} u\|_{2}&\leq C[Z(t)+(\int\frac{p(\rho)^2}{2\mu+\lambda(\rho)}\mbox{d}x)^{\frac12}]\\&\leq C(Z(t)+1).
\end{split}
\end{align}

Now let's estimates the terms on the right-hand side of \eqref{wf2}. From the H\"{o}lder inequality, interpolation inequality, the elementary estimate \eqref{elementary estimate}, \eqref{wf5}-\eqref{wf6}, it follows that for $0<\varepsilon\leq \frac14$,
\begin{align}\label{wf7}
\begin{split}
-\frac{\mu}{2}\int\omega^2\mbox{div}u\mbox{d}x&\leq C\|\mbox{div}u\|_2\|\omega\|_4^2\leq C(Z(t)+1)\|\omega\|_2^{\frac{1-3\varepsilon}{1-2\varepsilon}}
\|\nabla\omega\|_{2(1-\varepsilon)}^{\frac{1-\varepsilon}{1-2\varepsilon}}\\
&\leq C(Z(t)+1)Z(t)^{\frac{1-3\varepsilon}{1-2\varepsilon}}\varphi(t)^{\frac{1-\varepsilon}{1-2\varepsilon}}
\varepsilon^{\frac{1-\varepsilon}{(1-\beta)(1-2\varepsilon)}}\\
&\leq \delta \varphi^2(t)+C_{\delta}Z^2(t)(Z(t)+1)^{\frac{2(1-2\varepsilon)}{1-3\varepsilon}}
\varepsilon^{\frac{2(1-\varepsilon)}{(1-\beta)(1-3\varepsilon)}}\\
&\leq \delta \varphi^2(t)+C_{\delta}(Z
^2(t)+1)^{2+\frac{\varepsilon}{1-3\varepsilon}}
\varepsilon^{\frac{2(1-\varepsilon)}{(1-\beta)(1-3\varepsilon)}},
\end{split}
\end{align}
where and in the sequel $\delta>0$ is a small positive constant to be determined and $C_{\delta}$ is a positive constant depending on $\delta$.

From the definition of $F$ and $\lambda(\rho)$, and Lemma \ref{lem_phase}, similarly one has
\begin{align}\label{wf8}
\begin{split}
&-(\frac{1}{\rho}H\cdot\nabla H_1)_{x_2}\cdot\omega+(\frac{1}{\rho}H\cdot\nabla H_1)_{x_1}\cdot\omega\mbox{d}x\leq C\|\nabla \omega\|_2\|\nabla H\|_2\|H\|_{L^{\infty}}\\
&\leq C\varphi(t)\|\mbox{curl}H\|_2
\leq \delta \varphi^2(t)+C_{\delta}(Z(t)^2+1),
\end{split}
\end{align}
\begin{align}\label{wf8}
\begin{split}
&\frac12\int(F+\frac12|H|^2)^2\mbox{div}u[\rho(\frac{1}{2\mu+\lambda(\rho)})'-\frac{1}{2\mu+\lambda(\rho)}]\mbox{d}x\\
&=\frac12\int(F+\frac12|H|^2)^2(\frac{F+\frac12|H|^2}{2\mu+\lambda(\rho)}+\frac{p(\rho)}{2\mu+\lambda(\rho)})
\frac{2\mu+\lambda(\rho)+\rho\lambda'(\rho)}{(2\mu+\lambda(\rho))^2}\mbox{d}x\\
& \leq C \int(F+\frac12|H|^2)^2(\frac{F+\frac12|H|^2}{2\mu+\lambda(\rho)}+\frac{p(\rho)}{2\mu+\lambda(\rho)})\mbox{d}x\\
& \leq C(1+\int\frac{\big|F+\frac12|H|^2\big|^3}{2\mu+\lambda(\rho)}\mbox{d}x),
\end{split}
\end{align}
\begin{align}\label{wf9}
\begin{split}
&\int(F+\frac12|H|^2)\mbox{div}u[\rho(\frac{p}{2\mu+\lambda(\rho)})'-\frac{p}{2\mu+\lambda(\rho)}]\mbox{d}x
\\&=\int(F+\frac12|H|^2)(\frac{F+\frac12|H|^2}{2\mu+\lambda(\rho)}+\frac{p(\rho)}{2\mu+\lambda(\rho)})\\
& \times\frac{p(\rho)(2\mu+\lambda(\rho))+\rho\lambda'(\rho)p(\rho)-\rho p'(\rho)(2\mu+\lambda(\rho))}{(2\mu+\lambda(\rho))^2}\mbox{d}x\\
& \leq C \int(F+\frac12|H|^2)(\frac{F+\frac12|H|^2}{2\mu+\lambda(\rho)}+\frac{p(\rho)}{2\mu+\lambda(\rho)})p(\rho)\mbox{d}x\\
& \leq C(1+\int\frac{\big|F+\frac12|H|^2\big|^3}{2\mu+\lambda(\rho)}\mbox{d}x),
\end{split}
\end{align}
\begin{align}\label{wf10}
-\int2(F+\frac12|H|^2)(u_{1x_2}u_{2x_1}-u_{1x_1}u_{2x_2})\mbox{d}x\leq C \int \big|F+\frac12|H|^2\big||\nabla u|^2\mbox{d}x,
\end{align}
\begin{align}\label{wf11}
\begin{split}
&-\int\frac{2}{\rho}(F+\frac12|H|^2)(H_{1x_2}H_{2x_1}-H_{1x_1}H_{2x_2})\mbox{d}x\\
&\leq C\int \big|F+\frac12|H|^2\big||\nabla H|^2\mbox{d}x,
\end{split}
\end{align}
\end{proof}
\begin{align}\label{wf12}
\begin{split}
&\int B(\frac12|H|^2)_{x_2}+L(\frac12|H|^2)_{x_1}\mbox{d}x\\
&\leq C(\|\nabla H\|_2\|H\|_{L^{\infty}}\|\sqrt{\rho}B\|_2+\|\nabla H\|_2\|H\|_{L^{\infty}}\|\sqrt{\rho}L\|_2)\\
&\leq C(Z(t)+1)\varphi(t)\leq \delta\varphi^2(t)+C_{\delta}(Z^2(t)+1),
\end{split}
\end{align}
\begin{align}\label{wf13}
\begin{split}
&-\frac12\int(\mbox{curl}H)^2\mbox{div}u\mbox{d}x\leq C\|\mbox{div} u\|_2\|\mbox{curl}H\|_{4}^2\\
&\leq C(Z(t)+1)\|\mbox{curl}H\|_{2}^{\frac{1-3\varepsilon}{1-2\varepsilon}}
\|\nabla\mbox{curl}H\|_{2(1-\varepsilon)}^{\frac{1-\varepsilon}{1-2\varepsilon}}\\
&\leq C(Z(t)+1)Z(t)^{\frac{1-3\varepsilon}{1-2\varepsilon}}\varphi(t)^{\frac{1-\varepsilon}{1-2\varepsilon}}
\varepsilon^{\frac{1-\varepsilon}{(1-\beta)(1-2\varepsilon)}}\\
&\leq \delta \varphi^2(t)+C_{\delta}Z^2(t)(Z(t)+1)^{\frac{2(1-2\varepsilon)}{1-3\varepsilon}}
\varepsilon^{\frac{2(1-\varepsilon)}{(1-\beta)(1-3\varepsilon)}}\\
&\leq \delta \varphi^2(t)+C_{\delta}(Z(t)^2+1)^{2+\frac{\varepsilon}{1-3\varepsilon}}
\varepsilon^{\frac{2(1-\varepsilon)}{(1-\beta)(1-3\varepsilon)}},
\end{split}
\end{align}
\begin{align}\label{wf14}
\begin{split}
&-\int u_{x_1}\cdot\nabla H_{x_2}\cdot\mbox{curl}H\mbox{d}x+\int u_{x_2}\cdot\nabla H_{x_1}\cdot\mbox{curl}H\mbox{d}x\\
&\leq C\|\nabla H\|_4^2\|\nabla u\|_2\leq C(Z(t)+1)\|\mbox{curl}H\|_4^2\\
&\leq C(Z(t)+1)\|\mbox{curl}H\|_2^{\frac{1-3\varepsilon}{1-2\varepsilon}}
\|\nabla\mbox{curl}H\|_{2(1-\varepsilon)}^{\frac{1-\varepsilon}{1-2\varepsilon}}\\
&\leq \delta \varphi^2(t)+C_{\delta}(Z(t)^2+1)^{2+\frac{\varepsilon}{1-3\varepsilon}}
\varepsilon^{\frac{2(1-\varepsilon)}{(1-\beta)(1-3\varepsilon)}}
\end{split}
\end{align}
and
\begin{align}\label{wf15}
\begin{split}
&-\int H\cdot\nabla u_2+H_2\mbox{div}u(\mbox{curl}H)_{x_1}\mbox{d}x+\int H\cdot\nabla u_1+H_1\mbox{div}u(\mbox{curl}H)_{x_2}\mbox{d}x\\
&\leq \|\nabla\mbox{curl}H\|_2\|\nabla u\|_2\|H\|_{L^{\infty}}
\leq C\varphi(t)(Z(t)+1)\\
&\leq \delta\varphi^2(t)+C_{\delta}(Z^2(t)+1).
\end{split}
\end{align}
Substituting \eqref{wf8}-\eqref{wf15} into \eqref{wf7} leads to \begin{align}\label{wf16}
\begin{split}
\frac12\frac{\mbox{d}}{\mbox{d}t}Z^2(t)+\varphi^2(t)&\leq \delta \varphi^2(t)+C_{\delta}\big(1
+Z^2(t)+(Z^2(t)+1)^{2+\frac{\varepsilon}{1-3\varepsilon}}
\varepsilon^{\frac{2(1-\varepsilon)}{(1-\beta)(1-3\varepsilon)}}\big)\\
&+C\big[1+\int\frac{\big|F+\frac12|H|^2\big|^3}{2\mu+\lambda(\rho)}\mbox{d}x+\int\big|F+\frac12|H|^2\big||\nabla u|^2\mbox{d}x\\
&+\int\big|F+\frac12|H|^2\big||\nabla H|^2\mbox{d}x\big].
\end{split}
\end{align}
Now it remains to estimate the terms $\int\frac{\big|F+\frac12|H|^2\big|^3}{2\mu+\lambda(\rho)}\mbox{d}x$, $\int\big|F+\frac12|H|^2\big||\nabla u|^2\mbox{d}x$ and $\int\big|F+\frac12|H|^2\big||\nabla H|^2\mbox{d}x$ on the right-hand side of \eqref{wf16}.

From Lemma \ref{lem3}, for $\varepsilon\in[0,\frac12]$ and $\eta=\varepsilon$, it follows that
\begin{align}\label{wf17}
\|F+\frac12|H|^2\|_{2m}\leq C[\|F+\frac12|H|^2\|_{1}+m^{\frac12}\|\nabla(F+\frac12|H|^2)\|_{\frac{2m}{m+\varepsilon}}^{1-s}
\|F+\frac12|H|^2\|_{2(1-\varepsilon)}^s],
\end{align}
where $s=\frac{(1-\varepsilon)^2}{m-\varepsilon(1-\varepsilon)}$ and the positive constant $C$ is independent of $m$ and $\varepsilon$.
Choose the positive constant $\varepsilon=2^{-m}$ with $m>2$ being integer in the inequalities \eqref{wf16} and \eqref{wf17}. From Lemma \ref{lem3.4} one can get
\begin{align}\label{wf18}
\begin{split}
&\|F+\frac12|H|^2\|_{1}=\int(2\mu+\lambda(\rho))^{-\frac12}|F+\frac12|H|^2|(2\mu+\lambda(\rho))^{\frac12}\mbox{d}x\\
&\leq (\int\frac{(F+\frac12|H|^2)^2}{2\mu+\lambda(\rho)}\mbox{d}x)^{\frac12}(\int2\mu+\lambda(\rho)\mbox{d}x)^{\frac12}\leq C Z(t),
\end{split}
\end{align}
\begin{align}\label{wf19}
\begin{split}
&\|F+\frac12|H|^2\|_{2(1-\varepsilon)}^s\\
&=\big(\int(2\mu+\lambda(\rho))^{-(1-\varepsilon)}
|F+\frac12|H|^2|^{2(1-\varepsilon)}(2\mu+\lambda(\rho))^{1-\varepsilon}\mbox{d}x\big)^{\frac{s}{2(1-\varepsilon)}}\\
&\leq \big(\int\frac{(F+\frac12|H|^2)^2}{2\mu+\lambda(\rho)}\mbox{d}x\big)^{\frac s2}\big(\int(2\mu+\lambda(\rho))^{\frac{1-\varepsilon}{\varepsilon}}\mbox{d}x\big)^{\frac{s\varepsilon}{2(1-\varepsilon)}}\\
&\leq C Z^s(t)(\|\rho\|_{\frac{\beta(1-\varepsilon)}{\varepsilon}}^{\frac{s\beta}{2}}+1)\leq CZ^s(t)[(\frac{\beta(1-\varepsilon)}{\varepsilon})^{\frac{s\beta}{\beta-1}}+1]\\
&\leq CZ^s(t)[(\varepsilon^{-\frac{s\beta}{\beta-1}}+1]\leq C Z^s(t)(2^{\frac{ms\beta}{\beta-1}}+1)\leq CZ^s(t),
\end{split}
\end{align}
where in the last inequality one has used the fact that $ms=\frac{m(1-\varepsilon)^2}{m-\varepsilon(1-\varepsilon)}\rightarrow1$, as $m\rightarrow\infty$.

Inserting \eqref{wf5} with $r=\frac{\varepsilon}{m+\varepsilon}$, \eqref{wf18}-\eqref{wf19} into \eqref{wf17} implies that
\begin{align}\label{wf20}
\begin{split}
&\|F+\frac12|H|^2\|_{2m}\leq C[Z(t)+m^{\frac12}\|\nabla(F+\frac12|H|^2)\|_{\frac{2m}{m+\varepsilon}}^{1-s}
Z(t)^s]\\
&\leq C[Z(t)+m^{\frac12}(\frac{m+\varepsilon}{\varepsilon})^{\frac{1-s}{\beta-1}}\varphi(t)^{1-s}Z(t)^s]\\
&\leq C[Z(t)+m^{\frac12}(\frac{m}{\varepsilon})^{\frac{1-s}{\beta-1}}\varphi(t)^{1-s}Z(t)^s].
\end{split}
\end{align}
Therefore, it holds that
\begin{align}\label{wf3.54}
\begin{split}
&\int\frac{\big|F+\frac12|H|^2\big|^3}{2\mu+\lambda(\rho)}\mbox{d}x
\\
&=\int\frac{\big|F+\frac12|H|^2\big|^{2-\frac{1}{m-1}}}{(2\mu+\lambda(\rho))^{1-\frac{1}{2(m-1)}}}
(\frac{1}{2\mu+\lambda(\rho)})^{\frac{1}{2(m-1)}}\big|F+\frac12|H|^2\big|^{1+\frac{1}{m-1}}\mbox{d}x\\
&\leq \int\big(\frac{\big|F+\frac12|H|^2\big|^2}{2\mu+\lambda(\rho)}\big)^{1-\frac{1}{2(m-1)}}\big|F+\frac12|H|^2\big|^{\frac{m}{m-1}}\mbox{d}x\\
&\leq \big(\int\frac{\big|F+\frac12|H|^2\big|^2}{2\mu+\lambda(\rho)}\mbox{d}x\big)^{\frac{2m-3}{2(m-1)}}
\big(\int\big|F+\frac12|H|^2\big|^{2m}\mbox{d}x\big)^{\frac{1}{2(m-1)}}\\
&\leq Z(t)^{\frac{2m-3}{m-1}}\|F+\frac12|H|^2\|_{2m}^{\frac{m}{m-1}}\\
&\leq CZ(t)^{\frac{2m-3}{m-1}}[Z(t)+m^{\frac12}(\frac{m}{\varepsilon})^{\frac{1-s}{\beta-1}}\varphi(t)^{1-s}Z(t)^s]^{\frac{m}{m-1}}\\
&\leq C[Z(t)^3+m^{\frac{m}{2(m-1)}}(\frac{m}{\varepsilon})^{\frac{(1-s)m}{(\beta-1)(m-1)}}\varphi(t)^{\frac{(1-s)m}{m-1}}Z(t)^{\frac{(2+s)m-3}{m-1}}]\\
&\leq \delta \varphi^2(t)+C_{\delta}[Z(t)^3+m^{\frac{m}{m(1+s)-2)}}
(\frac{m}{\varepsilon})^{\frac{2(1-s)m}{(\beta-1)(m(1+s)-2}}Z(t)^{\frac{2((2+s)m-3)}{m(1+s)-2}}]\\
&\leq \delta \varphi^2(t)+C_{\delta}[(1+Z^2(t))^2+m(\frac{m}{\varepsilon})^{\frac{2}{\beta-1}}(1+Z^2(t))^{2+\frac{1-ms}{m(1+s)-2}}],
\end{split}
\end{align}
where we have applied the fact $$ms=\frac{m(1-\varepsilon)^2}{m-\varepsilon(1-\varepsilon)}\rightarrow1\ \mbox{as}\ m\rightarrow+\infty$$ and $$\lim_{m\rightarrow+\infty}(2^m(1-ms))=2,$$ with $\varepsilon=2^{-m}$.

Now consider the integral
\begin{align}\label{wf21}
\begin{split}
&\int\big|F+\frac12|H|^2\big||\nabla u|^2\mbox{d}x\leq \|F+\frac12|H|^2\|_{2m}\|\nabla u\|_{\frac{4m}{2m-1}}^2\\
&\leq C\|F+\frac12|H|^2\|_{2m}(\|\mbox{div} u\|_{\frac{4m}{2m-1}}^2+\|\omega\|_{\frac{4m}{2m-1}}^2)\\
&\leq C\|F+\frac12|H|^2\|_{2m}(\|\frac{F+\frac12|H|^2}{2\mu+\lambda(\rho)}\|_{\frac{4m}{2m-1}}^2+\|\omega\|_{\frac{4m}{2m-1}}^2+1).
\end{split}
\end{align}
Note that
\begin{align}\label{wf22}
\begin{split}
&\|\frac{F+\frac12|H|^2}{2\mu+\lambda(\rho)}\|_{\frac{4m}{2m-1}}^2
=\big(\int\frac{\big|F+\frac12|H|^2\big|^{\frac{4m}{2m-1}}}{(2\mu+\lambda(\rho))^{\frac{4m}{2m-1}}}\mbox{d}x\big)^{\frac{2m-1}{2m}}\\
&=\big(\int\frac{\big|F+\frac12|H|^2\big|^{\frac{2m(2m-3)}{(2m-1)(m-1)}}}{(2\mu+\lambda(\rho))^{\frac{4m}{2m-1}}}
\big|F+\frac12|H|^2\big|^{\frac{2m}{(2m-1)(m-1)}}\mbox{d}x\big)^{\frac{2m-1}{2m}}\\
&\leq \|F+\frac12|H|^2\|_{2m}^{\frac{1}{m-1}}\big(\int\frac{\big|F+\frac12|H|^2
\big|^2}{(2\mu+\lambda(\rho))^{\frac{4(m-1)}{2m-3}}}\mbox{d}x\big)^{\frac{2m-3}{2(m-1)}}\\
&\leq C \|F+\frac12|H|^2\|_{2m}^{\frac{1}{m-1}}\big(\int\frac{\big|F+\frac12|H|^2
\big|^2}{2\mu+\lambda(\rho)}\mbox{d}x\big)^{\frac{2m-3}{2(m-1)}}\\
&\leq C\|F+\frac12|H|^2\|_{2m}^{\frac{1}{m-1}}Z(t)^{\frac{2m-3}{2(m-1)}}
\end{split}
\end{align}
and by virtue of $\int\omega\mbox{d}x=0$, interpolation inequality and \eqref{wf5}, it can be derived as
\begin{align}
\begin{split}
\|\omega\|_{\frac{4m}{2m-1}}^2&\leq C\|\omega\|^{2-\frac{1-\varepsilon}{m(1-2\varepsilon)}}_2
\|\nabla\omega\|^{\frac{1-\varepsilon}{m(1-2\varepsilon)}}_{2(1-\varepsilon)}\\
&\leq CZ(t)^{2-\frac{1-\varepsilon}{m(1-2\varepsilon)}}
(\varphi(t)\varepsilon^{\frac{1}{1-\beta}})^{\frac{1-\varepsilon}{m(1-2\varepsilon)}}\\
&\leq CZ(t)^{2-\frac{1-\varepsilon}{m(1-2\varepsilon)}}2^{\frac{1-\varepsilon}{(\beta-1)(1-2\varepsilon)}}
\varphi(t)^{\frac{1-\varepsilon}{m(1-2\varepsilon)}}\\
&\leq CZ(t)^{2-\frac{1-\varepsilon}{m(1-2\varepsilon)}}\varphi(t)^{\frac{1-\varepsilon}{m(1-2\varepsilon)}},
\end{split}
\end{align}
which together with \eqref{wf20},\eqref{wf21}, \eqref{wf22} implies that
\begin{align}\label{velocityintegral}
\begin{split}
&\int\big|F+\frac12|H|^2\big||\nabla u|^2\mbox{d}x\\
&\leq C[Z(t)+m^{\frac12}(\frac{m}{\varepsilon})^{\frac{1-s}{\beta-1}}\varphi(t)^{1-s}Z(t)^s]^{1+\frac{1}{m-1}}
Z(t)^{\frac{2m-3}{2(m-1)}}\\
&+C [Z(t)+m^{\frac12}(\frac{m}{\varepsilon})^{\frac{1-s}{\beta-1}}\varphi(t)^{1-s}Z(t)^s]
[1+Z(t)^{2-\frac{1-\varepsilon}{m(1-2\varepsilon)}}\varphi(t)^{\frac{1-\varepsilon}{m(1-2\varepsilon)}}]\\
&\leq C[Z(t)^3+m^{\frac{m}{2(m-1)}}(\frac{m}{\varepsilon})^{\frac{m(1-s)}{(\beta-1)(m-1)}}
\varphi(t)^{\frac{m(1-s)}{m-1}}Z(t)^{\frac{ms+2m-3}{m-1}}+Z(t)\\
&
+Z(t)^{3-\frac{1-\varepsilon}{m(1-2\varepsilon)}}\varphi(t)^{\frac{1-\varepsilon}{m(1-2\varepsilon)}}
+m^{\frac12}(\frac{m}{\varepsilon})^{\frac{1-s}{\beta-1}}\varphi(t)^{1-s}Z(t)^s\\
&+m^{\frac12}(\frac{m}{\varepsilon})^{\frac{1-s}{\beta-1}}\varphi(t)^{1-s+\frac{1-\varepsilon}{m(1-2\varepsilon)}}
Z(t)^{2+s-\frac{1-\varepsilon}{m(1-2\varepsilon)}}]\\
&\leq \delta \varphi^2(t)+C_{\delta}[(1+Z^2(t))^2+\big(m^{\frac12}(\frac{m}{\varepsilon})^{\frac{1-s}{\beta-1}}Z^s(t)\big)^{\frac{2}{1+s}}
\\&+\big(m^{\frac{m}{2(m-1)}}(\frac{m}{\varepsilon})^{\frac{m(1-s)}{(\beta-1)(m-1)}}
Z(t)^{2+\frac{ms-1}{m-1}}\big)^{\frac{2(m-1)}{m(1+s)-2}}\\
&+\big(m^{\frac12}(\frac{m}{\varepsilon})^{\frac{1-s}{\beta-1}}
Z(t)^{2+s-\frac{1-\varepsilon}{m(1-2\varepsilon)}}\big)^{\frac{2}{1+s-\frac{1-\varepsilon}{m(1-2\varepsilon)}}}]\\
&\leq \delta \varphi^2(t)+C_{\delta}[(1+Z^2(t))^2+m(\frac{m}{\varepsilon})^{\frac{2}{\beta-1}}(1+Z^2(t))^{2+\frac{1-ms}{m(1+s)-2}}\\
&+m(\frac{m}{\varepsilon})^{\frac{2}{\beta-1}}(1+Z^2(t))+m(\frac{m}{\varepsilon})^{\frac{2}{\beta-1}}
(1+Z^2(t))^{2+\frac{1-ms+(2ms-1)\varepsilon}{m(1+s)(1-2\varepsilon)-1+\varepsilon}}].
\end{split}
\end{align}
By $\int \mbox{curl}H\mbox{d}x=0$, Lemma \ref{lem2}, \eqref{wf5} and \eqref{wf20}, after a similar estimate we arrive at
\begin{align}\label{magneticintegral}
\begin{split}
&\int\big|F+\frac12|H|^2\big||\nabla H|^2\mbox{d}x\\&
\leq \|F+\frac12|H|^2\|_{2m}\|\nabla H\|_{\frac{4m}{2m-1}}^2
\leq C\|F+\frac12|H|^2\|_{2m}\|\mbox{curl}H\|_{\frac{4m}{2m-1}}^2\\
&\leq C\|F+\frac12|H|^2\|_{2m}\|\mbox{curl}H\|^{2-\frac{1-\varepsilon}{m(1-2\varepsilon)}}_2
\|\nabla\mbox{curl}H\|^{\frac{1-\varepsilon}{m(1-2\varepsilon)}}_{2(1-\varepsilon)}\\
&\leq C [Z(t)+m^{\frac12}(\frac{m}{\varepsilon})^{\frac{1-s}{\beta-1}}\varphi(t)^{1-s}Z(t)^s]
Z(t)^{2-\frac{1-\varepsilon}{m(1-2\varepsilon)}}\varphi(t)^{\frac{1-\varepsilon}{m(1-2\varepsilon)}}\\
&\leq
C[m^{\frac12}(\frac{m}{\varepsilon})^{\frac{1-s}{\beta-1}}\varphi(t)^{1-s+\frac{1-\varepsilon}{m(1-2\varepsilon)}}
Z(t)^{2+s-\frac{1-\varepsilon}{m(1-2\varepsilon)}}\\
&+Z(t)^{3-\frac{1-\varepsilon}{m(1-2\varepsilon)}}\varphi(t)^{\frac{1-\varepsilon}{m(1-2\varepsilon)}}]\\
&\leq \delta \varphi^2(t)+C_{\delta}[(1+Z^2(t))^2+\big(m^{\frac12}(\frac{m}{\varepsilon})^{\frac{1-s}{\beta-1}}Z^s(t)\big)^{\frac{2}{1+s}}
\\&+\big(m^{\frac{m}{2(m-1)}}(\frac{m}{\varepsilon})^{\frac{m(1-s)}{(\beta-1)(m-1)}}
Z(t)^{2+\frac{ms-1}{m-1}}\big)^{\frac{2(m-1)}{m(1+s)-2}}\\
&+\big(m^{\frac12}(\frac{m}{\varepsilon})^{\frac{1-s}{\beta-1}}
Z(t)^{2+s-\frac{1-\varepsilon}{m(1-2\varepsilon)}}\big)^{\frac{2}{1+s-\frac{1-\varepsilon}{m(1-2\varepsilon)}}}]\\
&\leq \delta \varphi^2(t)+C_{\delta}[(1+Z^2(t))^2+m(\frac{m}{\varepsilon})^{\frac{2}{\beta-1}}
(1+Z^2(t))^{2+\frac{1-ms+(2ms-1)\varepsilon}{m(1+s)(1-2\varepsilon)-1+\varepsilon}}].
\end{split}
\end{align}
Substituting \eqref{wf3.54}, \eqref{velocityintegral} and \eqref{magneticintegral} into \eqref{wf16} and choosing $\delta$ sufficiently small gives that
\begin{align}\label{wf16j}
\begin{split}
\frac12\frac{\mbox{d}}{\mbox{d}t}Z^2(t)+\frac12\varphi^2(t)&\leq C\big[+(1+Z^2(t))^2+(Z^2(t)+1)^{2+\frac{\varepsilon}{1-3\varepsilon}}
\varepsilon^{\frac{2(1-\varepsilon)}{(1-\beta)(1-3\varepsilon)}}
\\
&+m(\frac{m}{\varepsilon})^{\frac{2}{\beta-1}}(1+Z^2(t))^{2+\frac{1-ms}{m(1+s)-2}}
+m(\frac{m}{\varepsilon})^{\frac{2}{\beta-1}}(1+Z^2(t))\\
&+m(\frac{m}{\varepsilon})^{\frac{2}{\beta-1}}
(1+Z^2(t))^{2+\frac{1-ms+(2ms-1)\varepsilon}{m(1+s)(1-2\varepsilon)-1+\varepsilon}}\big].
\end{split}
\end{align}
Notice that
$$\lim_{m\rightarrow+\infty}[2^m(1-ms)]=2,$$ hence $1-ms\sim 2\varepsilon$ as $m\rightarrow +\infty$.
For $m$ sufficiently large enough, one can show that
$$\frac{1-ms}{m(1+s)-2}\leq 4\varepsilon,$$
and
$$\frac{1-ms+(2ms-1)\varepsilon}{m(1+s)(1-2\varepsilon)-1+\varepsilon}\leq 4\varepsilon.$$
Therefore, from \eqref{wf16j}, we obtain the following inequality
\begin{align}\label{wf16jj}
\begin{split}
\frac12\frac{\mbox{d}}{\mbox{d}t}Z^2(t)+\frac12\varphi^2(t)&\leq Cm(\frac{m}{\varepsilon})^{\frac{2}{\beta-1}}
(1+Z^2(t))^{2+4\varepsilon}.
\end{split}
\end{align}
On the other hand, from
 \begin{align*}
Z^2(t)=\int\mu\omega
+(\mbox{curl}H)^2+\frac{(F+\frac12|H|^2)^2}{2\mu+\lambda(\rho)}\mbox{d}x
\end{align*}
and the estimate for density and the elementary estimates for the velocity and magnetic field, it is easy to show that $Z^2(t)\in L^1(0,T)$. Thus, from \eqref{wf16jj}, we conclude
\begin{align*}
\frac{1}{(1+Z^2(t))^{4\varepsilon}}-\frac{1}{(1+Z^2(0))^{4\varepsilon}}+C m\varepsilon(\frac{m}{\varepsilon})^{\frac{2}{\beta-1}}\geq0.
\end{align*}
Take $M>2$ so as to satisfy the inequality
\begin{align*}
C M\varepsilon(\frac{M}{\varepsilon})^{\frac{2}{\beta-1}}\leq\frac{1}{2(1+Z^2(0))^{4\varepsilon}},
\end{align*}
that is,
\begin{align}\label{wf3.65}
C M^{1+\frac{2}{\beta-1}}2^{-m(1-\frac{2}{\beta-1})}\leq\frac{1}{2(1+Z^2(0))^{4\varepsilon}},
\end{align}
then
\begin{align}\label{wf3.63}
\frac{1}{(1+Z^2(t))^{4\varepsilon}}\geq\frac{1}{(1+Z^2(0))^{4\varepsilon}}.
\end{align}
Since
\begin{align*}
&Z^2(0)=\int\mu\omega_0
+(\mbox{curl}H_0)^2+\frac{(F_0+\frac12|H_0|^2)^2}{2\mu+\lambda(\rho_0)}\mbox{d}x\\
&\leq C[\|u_0\|_{H^2}^2+\|H_0\|_{H^2}^2+\|\rho_0\|_{H^3}^{\beta}\|u_0\|_{H^2}^2+\|\rho_0\|_{H^3}^{2\gamma}]\\
&\leq C,
\end{align*}
if $1-\frac{2}{\beta-1}>0$, that is $\beta>3$, then we can take sufficiently large $M>2$ to ensure the condition \eqref{wf3.65}. From \eqref{wf3.63}, it holds that
$$Z^2(t)\leq 2^{2^{m-2}}(1+Z^2(0))-1\leq C$$ and $$\int_0^T\varphi(t)\mbox{d}t\leq C.$$
Thus we complete the proof of Lemma \ref{lem3.5}.

Step 4. Second-order estimates for the velocity and the magnetic field
\begin{lem}\label{lem3.6}
There exists a positive constant $C$ independent of $\delta$,
such that
\begin{align*}
&\sup_{t\in[0,T]}\int \rho(B^2+L^2)+\nu|\nabla\mbox{curl}H|^2+|\nabla\rho|^2\mbox{d}x
+\int_0^T\int\mu(B_{x_1}-L_{x_2})^2\\
&+(2\mu+\lambda(\rho))(B_{x_2}+L_{x_1})^2\mbox{d}x\mbox{d}t\leq C.
\end{align*}
\end{lem}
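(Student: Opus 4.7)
The plan is to test the first two equations of \eqref{BL} by $B$ and $L$ respectively and sum, to differentiate the magnetic equation once more and test against $\nabla\mbox{curl}\,H$, and to handle $\nabla\rho$ via the continuity equation combined with the effective-viscous-flux identity $(2\mu+\lambda(\rho))\mbox{div}\,u=F+p(\rho)+\tfrac12|H|^2$. These three estimates are then coupled through a Gronwall argument, with the dissipation produced by the $(B,L)$-test absorbing the highest-order contributions from the right-hand side of \eqref{BL}.

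The key structural observation is that integration by parts on the dissipative right-hand side of \eqref{BL} yields exactly the quadratic form claimed in the lemma. Using the continuity equation, the material-derivative block $\rho B_t+\rho u\cdot\nabla B-\rho B\,\mbox{div}\,u$ tested against $B$, together with the symmetric $L$-identity, produces $\tfrac12\frac{\mbox{d}}{\mbox{d}t}\int\rho(B^2+L^2)\,\mbox{d}x$. Integrating by parts the $\mu\{\cdots\}_{x_1}$ and $\{(2\mu+\lambda(\rho))\cdots\}_{x_2}$ blocks in the $B$-equation against $B$, and the analogous terms in the $L$-equation against $L$, combines to give the dissipation $\mu\int(B_{x_1}-L_{x_2})^2\,\mbox{d}x+\int(2\mu+\lambda(\rho))(B_{x_2}+L_{x_1})^2\,\mbox{d}x$ modulo commutators in $\nabla\rho$ and $\nabla|H|^2$ that will be treated as perturbations.

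For $\nabla\mbox{curl}\,H$ I apply $\nabla$ to the third equation of \eqref{BL} and test against $\nabla\mbox{curl}\,H$; the dissipation $\nu\int|\Delta\mbox{curl}\,H|^2\,\mbox{d}x$ controls the nonlinear forcing, bounded via Lemma \ref{lem_phase} (for $\|H\|_\infty$), Lemma \ref{lem3.5} (for $Z(t)\le C$ and hence for $\|\nabla u\|_2$, $\|\mbox{curl}\,H\|_2$), and Lemmas \ref{Bernstein}--\ref{lem3}. For $\nabla\rho$, differentiating $\rho_t+u\cdot\nabla\rho+\rho\,\mbox{div}\,u=0$ and testing with $\nabla\rho$ leaves the single delicate term $\int\rho\nabla\mbox{div}\,u\cdot\nabla\rho\,\mbox{d}x$; substituting $\mbox{div}\,u=(F+p+\tfrac12|H|^2)/(2\mu+\lambda(\rho))$ splits this into a favourable-sign piece of the form $-\int\lambda'(\rho)(F+p+\tfrac12|H|^2)|\nabla\rho|^2/(2\mu+\lambda(\rho))^2\,\mbox{d}x$ plus remainders in $\nabla F$ and $\nabla|H|^2$, both of which Young's inequality controls against the dissipation built into $\varphi(t)$ from Lemma \ref{lem3.5}.

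The main obstacle is the cubic clutter on the right-hand side of \eqref{BL}: the quadratic-in-$\nabla u$ block with coefficient $2\mu+\lambda(\rho)$, its analogue in $\nabla H$ with coefficient $(2\mu+\lambda(\rho))/\rho$, and the $\nabla(1/\rho)\cdot H\cdot\nabla H$ term, all with an outer derivative that redistributes one order once integrated by parts against $B$ or $L$. I would handle the $\nabla u$-block via the identity $(u_{1x_1})^2+2u_{1x_2}u_{2x_1}+(u_{2x_2})^2=(\mbox{div}\,u)^2+2(u_{1x_2}u_{2x_1}-u_{1x_1}u_{2x_2})$ already used before \eqref{omega and f}, writing $(2\mu+\lambda(\rho))\mbox{div}\,u=F+p+\tfrac12|H|^2$ and reducing to quantities controlled by $Z(t)\le C$ and $\|\rho\|_k\le Ck^{2/(\beta-1)}$ from Lemmas \ref{lem3.4}--\ref{lem3.5}. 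For the $H$-block I use the pointwise bound $|H|\le C$ from Lemma \ref{lem_phase}, and for $\nabla(1/\rho)\cdot H\cdot\nabla H$ I interpolate $|\nabla\rho|/\rho^2$ using the $\nabla\rho$-estimate being built plus Lemma \ref{lem3.4}, here again relying on $\beta>3$ to keep high-$L^p$ norms of $\rho^\beta$ manageable (exactly as in the iteration closing Lemma \ref{lem3.5}). Young's inequality absorbs all top-order residuals into $\mu\int(B_{x_1}-L_{x_2})^2+\int(2\mu+\lambda(\rho))(B_{x_2}+L_{x_1})^2$ and $\nu\int|\Delta\mbox{curl}\,H|^2$, after which the remaining lower-order terms are $L^1_t$-integrable by Lemmas \ref{lem3.1}, \ref{lem3.4}, \ref{lem3.5}, and Gronwall closes the estimate.
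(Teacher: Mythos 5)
Your plan is essentially the paper's proof: testing the first two equations of \eqref{BL} with $B$ and $L$ gives \eqref{bl1} with precisely the dissipation in the statement; the second-order magnetic estimate is obtained in the paper by applying $\nabla\mbox{curl}$ (rather than $\nabla$) to the magnetic equation and testing with $\nabla\mbox{curl}H$, which produces the dissipation $\nu\|\nabla^2\mbox{curl}H\|_2^2$; the $\nabla\rho$ estimate comes from the differentiated continuity equation; the three identities are summed into \eqref{bl3} and estimated term by term with the auxiliary facts $\|\nabla(F+\frac12|H|^2,\omega)\|_p\le C\|\rho(B,L)\|_p$ (see \eqref{bl7}) and $\|(B,L)\|_p\le C\|\nabla(B,L)\|_2$ (see \eqref{bl3.74}), and the estimate is closed by Young's inequality and Gronwall, using $Y^2(t)\in L^1(0,T)$ from Lemma \ref{lem3.5}, exactly as you propose.

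Two points in your write-up need repair. First, your disposal of $-\int\rho\nabla\rho\cdot\nabla\mbox{div}u\,\mbox{d}x$ by a sign argument is not correct: writing $\mbox{div}u=\frac{F+\frac12|H|^2+p(\rho)}{2\mu+\lambda(\rho)}$ gives
\begin{align*}
-\int\rho\nabla\rho\cdot\nabla\mbox{div}u\,\mbox{d}x
&=-\int\frac{\rho\,\nabla\rho\cdot\nabla(F+\tfrac12|H|^2)}{2\mu+\lambda(\rho)}\mbox{d}x
-\int\frac{\rho\,p'(\rho)}{2\mu+\lambda(\rho)}|\nabla\rho|^2\mbox{d}x
+\int\frac{\rho\,\lambda'(\rho)\,\mbox{div}u}{2\mu+\lambda(\rho)}|\nabla\rho|^2\mbox{d}x,
\end{align*}
so the only genuinely signed piece is the pressure term; the $\lambda'$-piece that you call ``favourable'' carries the sign of $\mbox{div}u$, hence is indefinite (and in your formula it appears with the wrong sign and a missing factor of $\rho$). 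It is still harmless, but it must be estimated like the other perturbations, e.g.\ through interpolation bounds for $F+\frac12|H|^2$ in terms of $Y(t)$ and $\psi(t)$, and absorbed by Young's inequality into $\delta\psi^2(t)+C_{\delta}(1+Y(t))^4$, which is how the paper's bound \eqref{bl14} is to be read. Second, the Gronwall step requires the initial value $Y(0)$, in particular $\|\sqrt{\rho_0}(B_0,L_0)\|_2$, to be finite; the paper devotes the final portion of the proof to this (using the momentum equation at $t=0$ together with the regularity and positivity of the initial data), and your proposal omits this step entirely.
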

\begin{proof}
Multiplying the first equation of the system \eqref{BL} by $B$, the second by $L$, and integrating their sum in the space variable over $\mathbb{T}^2$, then using the continuity equation, we get
\begin{align}\label{bl1}
\begin{split}
&\frac12\frac{\mbox{d}}{\mbox{d}t}\int\rho(B^2+L^2)\mbox{d}x+\int\mu(B_{x_1}-L_{x_2})^2
+(2\mu+\lambda(\rho))(B_{x_2}+L_{x_1})^2\mbox{d}x\\
&=\int\rho(B^2+L^2)\mbox{div}u\mbox{d}x
+\int\mu[(\frac{1}{2\rho}|H|^2_{x_2})_{x_1}-(\frac{1}{2\rho}|H|^2_{x_1})_{x_2}](B_{x_1}-L_{x_2})\mbox{d}x\\
&-\int\rho(2\mu+\lambda(\rho))
[(F+\frac{1}{2}|H|^2)(\frac{1}{2\mu+\lambda(\rho)})'
+(\frac{p}{2\mu+\lambda(\rho)})']\mbox{div}u(B_{x_2}+L_{x_2})\mbox{d}x\\&-\int B(u_{x_2}\cdot\nabla(F+\frac{1}{2}|H|^2)+\mu u_{x_1}\nabla\omega)+L(u_{x_1}\cdot\nabla(F+\frac{1}{2}|H|^2)-\mu u_{x_2}\nabla\omega)\mbox{d}x
\\&+\int\mu[\omega\mbox{div}u+(\frac{1}{\rho}H\cdot\nabla H_1)_{x_2}-(\frac{1}{\rho}H\cdot\nabla H_1)_{x_1}](B_{x_1}-L_{x_2})\mbox{d}x\\
&+\int(2\mu+\lambda(\rho))[(u_{1x_1})^2+2u_{1x_2}u_{2x_1}
+(u_{2x_2})^2](B_{x_2}+L_{x_1})\mbox{d}x\\&-\int\frac{2\mu+\lambda(\rho)}{\rho}
[(H_{1x_1})^2+2H_{1x_2}H_{2x_1}+(H_{2x_2})^2](B_{x_2}+L_{x_1})\mbox{d}x\\&
-\int(2\mu+\lambda(\rho)) [(\frac{1}{\rho})_{x_1} H \cdot \nabla H_1+(\frac{1}{\rho})_{x_2}H\cdot \nabla H_2](B_{x_2}+L_{x_2})\mbox{d}x
\\&+\int(2\mu+\lambda(\rho))(B_{x_2}+L_{x_1})[(\frac{1}{2\rho}|H|^2_{x_2})_{x_2}
+(\frac{1}{2\rho}|H|^2_{x_1})_{x_1}].
\end{split}
\end{align}
Applying the operator $\nabla\mbox{curl}$ to the magnetic equation, multiplying it by $\nabla\mbox{curl}H$, and then integrating over $\mathbb{T}^2$, we obtain
\begin{align}\label{bl2}
\begin{split}
&\frac12\frac{\mbox{d}}{\mbox{d}t}\|\nabla\mbox{curl}H\|_2^2+\nu\|\nabla^2\mbox{curl}H\|_2^2
\\&=-\int\nabla\mbox{curl}u\cdot\nabla H\cdot\nabla\mbox{curl}H
+\mbox{curl}u\cdot\nabla \nabla H\cdot\nabla\mbox{curl}H\\&+\nabla u\cdot\nabla\mbox{curl}H\cdot\nabla\mbox{curl}H+u\cdot\nabla\nabla\mbox{curl}H\cdot\nabla\mbox{curl}H
\\&-\nabla\mbox{curl}H\cdot\nabla u\cdot\nabla\mbox{curl}H-\mbox{curl}H\cdot\nabla\nabla u\cdot\nabla\mbox{curl}H\\
&-\nabla H\cdot \nabla\mbox{curl}u\cdot\nabla\mbox{curl}H-H\cdot\nabla\nabla\mbox{curl}u\cdot\nabla\mbox{curl}H\\
&+\mbox{div}u\nabla\mbox{curl}H\cdot\nabla\mbox{curl}H+\mbox{curl}H\cdot\nabla\mbox{div}u\cdot\nabla\mbox{curl}H\\
&+\nabla H\mbox{curl}\mbox{div}u\cdot\nabla\mbox{curl}H+H\cdot\nabla\mbox{curl}\mbox{div}u\cdot\nabla\mbox{curl}H\mbox{d}x.
\end{split}
\end{align}
Applying $\nabla$ to the mass equation, multiplying by $\nabla\rho$, then integrating the resulted equation by parts, one arrives at
\begin{align*}
\frac12\frac{\mbox{d}}{\mbox{d}t}\|\nabla\rho\|_2^2=-\int\nabla u|\nabla\rho|^2\mbox{d}x-\frac12\int\mbox{div}u|\nabla\rho|^2\mbox{d}x
-\int\rho\nabla\rho\cdot\nabla\mbox{div}u\mbox{d}x,
\end{align*}
which together with \eqref{bl1}-\eqref{bl2} gives that
\begin{align}\label{bl3}
\begin{split}
&\frac12\frac{\mbox{d}}{\mbox{d}t}\int\rho(B^2+L^2)+|\nabla\mbox{curl}H|^2+|\nabla\rho|^2\mbox{d}x\\
&+\int\mu(B_{x_1}-L_{x_2})^2
+(2\mu+\lambda(\rho))(B_{x_2}+L_{x_1})^2+\nu|\nabla^2\mbox{curl}H|^2\mbox{d}x\\
&=\int\rho(B^2+L^2)\mbox{div}u\mbox{d}x
+\int\mu[(\frac{1}{2\rho}|H|^2_{x_2})_{x_1}-(\frac{1}{2\rho}|H|^2_{x_1})_{x_2}](B_{x_1}-L_{x_2})\mbox{d}x\\
&-\int\rho(2\mu+\lambda(\rho))
[(F+\frac{1}{2}|H|^2)(\frac{1}{2\mu+\lambda(\rho)})'
+(\frac{p}{2\mu+\lambda(\rho)})']\mbox{div}u(B_{x_2}+L_{x_2})\mbox{d}x\\&-\int B(u_{x_2}\cdot\nabla(F+\frac{1}{2}|H|^2)+\mu u_{x_1}\nabla\omega)+L(u_{x_1}\cdot\nabla(F+\frac{1}{2}|H|^2)-\mu u_{x_2}\nabla\omega)\mbox{d}x
\\&+\int\mu[\omega\mbox{div}u+(\frac{1}{\rho}H\cdot\nabla H_1)_{x_2}-(\frac{1}{\rho}H\cdot\nabla H_1)_{x_1}](B_{x_1}-L_{x_2})\mbox{d}x\\
&+\int(2\mu+\lambda(\rho))[(u_{1x_1})^2+2u_{1x_2}u_{2x_1}
+(u_{2x_2})^2](B_{x_2}+L_{x_1})\mbox{d}x\\&-\int\frac{2\mu+\lambda(\rho)}{\rho}
[(H_{1x_1})^2+2H_{1x_2}H_{2x_1}+(H_{2x_2})^2](B_{x_2}+L_{x_1})\mbox{d}x\\&
-\int(2\mu+\lambda(\rho)) [(\frac{1}{\rho})_{x_1} H \cdot \nabla H_1+(\frac{1}{\rho})_{x_2}H\cdot \nabla H_2](B_{x_2}+L_{x_2})\mbox{d}x
\\&+\int(2\mu+\lambda(\rho))(B_{x_2}+L_{x_1})[(\frac{1}{2\rho}|H|^2_{x_2})_{x_2}
+(\frac{1}{2\rho}|H|^2_{x_1})_{x_1}]\\
&-\int\nabla\mbox{curl}u\cdot\nabla H\cdot\nabla\mbox{curl}H
+\mbox{curl}u\cdot\nabla \nabla H\cdot\nabla\mbox{curl}H+\nabla u\cdot\nabla\mbox{curl}H\cdot\nabla\mbox{curl}H\\&+u\cdot\nabla\nabla\mbox{curl}H\cdot\nabla\mbox{curl}H
-\nabla\mbox{curl}H\cdot\nabla u\cdot\nabla\mbox{curl}H-\mbox{curl}H\cdot\nabla\nabla u\cdot\nabla\mbox{curl}H\\
&-\nabla H\cdot \nabla\mbox{curl}u\cdot\nabla\mbox{curl}H-H\cdot\nabla\nabla\mbox{curl}u\cdot\nabla\mbox{curl}H
+\mbox{div}u\nabla\mbox{curl}H\cdot\nabla\mbox{curl}H\\&+\mbox{curl}H\cdot\nabla\mbox{div}u\cdot\nabla\mbox{curl}H
+\nabla H\mbox{curl}\mbox{div}u\cdot\nabla\mbox{curl}H+H\cdot\nabla\mbox{curl}\mbox{div}u\cdot\nabla\mbox{curl}H\mbox{d}x\\
&-\int\nabla u|\nabla\rho|^2\mbox{d}x-\frac12\int\mbox{div}u|\nabla\rho|^2\mbox{d}x
-\int\rho\nabla\rho\cdot\nabla\mbox{div}u\mbox{d}x.
\end{split}
\end{align}
Put
\begin{align}\label{bl4}
Y(t)=\big(\int\rho(B^2+L^2)+|\nabla\mbox{curl}H|^2+|\nabla\rho|^2\mbox{d}x\big)^{\frac{1}{2}}
\end{align}
and
\begin{align}\label{bl5}
\psi(t)=\big(\int\mu(B_{x_1}-L_{x_2})^2
+(2\mu+\lambda(\rho))(B_{x_2}+L_{x_1})^2+\nu|\nabla^2\mbox{curl}H|^2\mbox{d}x\big)^{\frac12}.
\end{align}
Note that
\begin{align*}
\int(|\nabla B|^2+|\nabla L|^2)\mbox{d}x&=\int(B_{x_1}^2+B_{x_2}^2+L_{x_1}^2+L_{x_2}^2)\mbox{d}x\\
&=\int[(B_{x_1}-L_{x_2})^2+(B_{x_2}+L_{x_1})]\mbox{d}x\\
&\leq \frac{1}{\mu}\psi^2(t).
\end{align*}
Thus it holds that
\begin{align}\label{bl6}
\|\nabla (B,L)\|_2(t)\leq C\psi(t), \ \forall t\in [0,T].
\end{align}
Then it follows from the elliptic system
$$\mu\omega_{x_1}+(F+\frac12|H|^2)_{x_2}=\rho B,$$
$$-\mu\omega_{x_2}+(F+\frac12|H|^2)_{x_1}=\rho L,$$
that
\begin{align}\label{bl7}
\|\nabla (F+\frac12|H|^2, \omega)\|_p\leq C\|\rho(B,L)\|_p,\ \forall 1<p<+\infty.
\end{align}
Furthermore, since $\int\mu\omega_{x_1}+(F+\frac12|H|^2)_{x_2}\mbox{d}x=0$, by the mean value theorem, there exists a point $x_*\in\mathbb{T}^2$, such that $(\omega_{x_1}+(F+\frac12|H|^2)_{x_2})({x_*},t)=0$, and so $B(x_*,t)=0$. Similarly, there exists a point $x^1_*$, such that $L(x_*^1,t)=0$. Therefore, by the Poincare inequality, it holds that
\begin{align}\label{bl3.74}
\|(B,L)\|_p\leq C\|\nabla(B,L)\|_2,\ \forall 1<p<+\infty ,
\end{align}
where $C$ depend on $p$.

Now we estimate the right-hand side of \eqref{bl3} term by term. From the H\"{o}lder inequality, \eqref{bl3.74} and Lemma \ref{lem3.4}, it holds that
\begin{align}\label{bl8}
\begin{split}
|\int\rho(B^2+L^2)\mbox{div}u\mbox{d}x|&=|\int\rho(B^2+L^2)\frac{F+\frac12|H|^2+p(\rho)}{2\mu+\lambda(\rho)}\mbox{d}x|\\
&\leq \|\sqrt{\rho}(B,L)\|_2\|(B,L)\|_4\|\frac{\sqrt{\rho}(F+\frac12|H|^2+p(\rho))}{2\mu+\lambda(\rho)}\|_4\\
&\leq CY(t)\psi(t)(1+\|F+\frac12|H|^2\|_4).
\end{split}
\end{align}
Observe that
\begin{align}\label{bl9}
\|(F+\frac12|H|^2,\omega)\|_4\leq C\|\nabla(F+\frac12|H|^2,\omega)\|_2\leq CY(t).
\end{align}
Then we can write \eqref{bl8} as
\begin{align}\label{bl8j}
\begin{split}
\int\rho(B^2+L^2)\mbox{div}u\mbox{d}x&=\int\rho(B^2+L^2)\frac{F+\frac12|H|^2+p(\rho)}{2\mu+\lambda(\rho)}\mbox{d}x\\
&\leq CY(t)\psi(t)(1+Y(t))\\
&\leq \delta\psi^2(t)+C_{\delta}(Y(t)+1)^4.
\end{split}
\end{align}
Direct estimates give
\begin{align}\label{bl8jj}
\begin{split}
&-\int\rho(2\mu+\lambda(\rho))
[(F+\frac{1}{2}|H|^2)(\frac{1}{2\mu+\lambda(\rho)})'
+(\frac{p}{2\mu+\lambda(\rho)})']\mbox{div}u(B_{x_2}+L_{x_2})\mbox{d}x\\
&\leq  \delta\int(2\mu+\lambda(\rho))(B_{x_2}+L_{x_2})^2\mbox{d}x+C_{\delta}\int\rho^2(2\mu+\lambda(\rho))
[(F+\frac{1}{2}|H|^2)(\frac{1}{2\mu+\lambda(\rho)})'
\\
&+(\frac{p}{2\mu+\lambda(\rho)})']^2(\mbox{div}u)^2\mbox{d}x\\
&\leq \delta\psi^2(t)+C_{\delta}\int\rho^2[(F+\frac{1}{2}|H|^2)(\frac{1}{2\mu+\lambda(\rho)})'
+(\frac{p}{2\mu+\lambda(\rho)})']^2\frac{|F+\frac12|H|^2|^2+p^2(\rho)}{2\mu+\lambda(\rho)}\mbox{d}x\\
&\leq \delta\psi^2(t)+C_{\delta}(1+\|F+\frac12|H|^2\|_4^4)\\
&\leq \delta\psi^2(t)+C_{\delta}(Y(t)+1)^4,
 \end{split}
\end{align}
\begin{align}\label{bl9}
\begin{split}
&-\int B(u_{x_2}\cdot\nabla(F+\frac{1}{2}|H|^2)+\mu u_{x_1}\nabla\omega)+L(u_{x_1}\cdot\nabla(F+\frac{1}{2}|H|^2)-\mu u_{x_2}\nabla\omega)\mbox{d}x\\
&\leq C\int|(B,L)||\nabla u||\nabla(F+\frac12|H|^2,\omega)|\mbox{d}x
\leq C\|(B,L)\|_8\|\nabla u\|_2\|\nabla(F+\frac12|H|^2,\omega)\|_{\frac83}\\
&\leq C\|\nabla(B,L)\|_2\|\rho(B,L)\|_{\frac83}\leq CY^{\frac38}(t)\psi^{\frac{13}{8}}
\leq\delta\psi^2(t)+C_{\delta}Y^2(t),
\end{split}
\end{align}
\begin{align}\label{bl10}
\begin{split}
&+\int\mu[\omega\mbox{div}u+(\frac{1}{\rho}H\cdot\nabla H_1)_{x_2}-(\frac{1}{\rho}H\cdot\nabla H_1)_{x_1}](B_{x_1}-L_{x_2})\mbox{d}x\\
&\leq \mu\big(\int(B_{x_1}-L_{x_2})^2\mbox{d}x\big)^{\frac12}\big(\int\omega^2(\mbox{div}u)^2+[(\frac{1}{\rho}H\cdot\nabla H_1)_{x_2}-(\frac{1}{\rho}H\cdot\nabla H_1)_{x_1}]^2\mbox{d}x\big)^{\frac12}\\
&\leq \delta\psi^2(t)+C_{\delta}\|\omega\|_4^2\|\frac{F+\frac12|H|^2+p(\rho)}{2\mu+\lambda(\rho)}\|_4^2+C_{\delta}Y^2(t)\\
&\leq\delta\psi^2(t)+C_{\delta}\|\omega\|_4^2(1+\|F+\frac12|H|^2\|_4^2)+C_{\delta}Y^2(t)\\
&\leq\delta\psi^2(t)+C_{\delta}(Y(t)+1)^4,
\end{split}
\end{align}
\begin{align}\label{bl11}
\begin{split}
&\int(2\mu+\lambda(\rho))[(u_{1x_1})^2+2u_{1x_2}u_{2x_1}
+(u_{2x_2})^2](B_{x_2}+L_{x_1})\mbox{d}x\\&-\int\frac{2\mu+\lambda(\rho)}{\rho}
[(H_{1x_1})^2+2H_{1x_2}H_{2x_1}+(H_{2x_2})^2](B_{x_2}+L_{x_1})\mbox{d}x\\
&\leq \big(\int(2\mu+\lambda(\rho))(B_{x_2}+L_{x_1})^2\mbox{d}x\big)^{\frac12}\big(\int(2\mu+\lambda(\rho))[(u_{1x_1})^2+2u_{1x_2}u_{2x_1}
\\
&+(u_{2x_2})^2+(H_{1x_1})^2+2H_{1x_2}H_{2x_1}+(H_{2x_2})^2]^2\mbox{d}x\big)^{\frac12}\\
&\leq \delta\psi^2(t)+C_{\delta}\int(2\mu+\lambda(\rho))[(u_{1x_1})^2+2u_{1x_2}u_{2x_1}
+(u_{2x_2})^2+(H_{1x_1})^2\\
&+2H_{1x_2}H_{2x_1}+(H_{2x_2})^2]^2\mbox{d}x\\
&\leq \delta\psi^2(t)+C_{\delta}\|2\mu+\lambda(\rho)\|_2(\|\nabla u\|_8^4+\|\nabla H\|_8^4)\\
&\leq \delta\psi^2(t)+C_{\delta}(\|\mbox{div} u\|_8^4+\|\omega\|_8^4+\|\mbox{curl}H\|_8^4)\\
&\leq \delta\psi^2(t)+C_{\delta}(1+\|(F+\frac12|H|^2,\omega)\|_8^4+\|\mbox{curl}H\|_8^4)\\
&\leq \delta\psi^2(t)+C_{\delta}(1+\|\nabla(F+\frac12|H|^2,\omega,\mbox{curl}H)\|_{\frac85}^4)\\
&\leq\delta\psi^2(t)+C_{\delta}(Y(t)+1)^4,
\end{split}
\end{align}
where we have used the fact that
\begin{align*}
\|\nabla u\|_2\leq C(\|\mbox{div}u\|_2+\|\omega\|_2)\leq C(\|\frac{F+\frac12|H|^2+p(\rho)}{2\mu+\lambda(\rho)}\|_2+\|\omega\|_2)\leq C,
\end{align*}
\begin{align*}
\|\rho(B,L)\|_{\frac83}&=(\int\sqrt{\rho}|(B,L)||(B,L)|^{\frac53}\rho^{\frac{13}{6}}\mbox{d}x)^{\frac38}
\leq\|\sqrt{\rho}(B,L)\|_2^{\frac38}\|(B,L)\|_4^{\frac58}\|\rho\|_{26}^{\frac{13}{16}}\\
&\leq CY^{\frac38}(t)\|\nabla(B,L)\|_2^{\frac58}.
\end{align*}
After a tedious calculation,
\begin{align}\label{bl12}
\begin{split}
&-\int(2\mu+\lambda(\rho)) [(\frac{1}{\rho})_{x_1} H \cdot \nabla H_1+(\frac{1}{\rho})_{x_2}H\cdot \nabla H_2](B_{x_2}+L_{x_2})\mbox{d}x
\\&+\int(2\mu+\lambda(\rho))(B_{x_2}+L_{x_1})[(\frac{1}{2\rho}|H|^2_{x_2})_{x_2}
+(\frac{1}{2\rho}|H|^2_{x_1})_{x_1}]\\
&\int\mu[(\frac{1}{2\rho}|H|^2_{x_2})_{x_1}-(\frac{1}{2\rho}|H|^2_{x_1})_{x_2}](B_{x_1}-L_{x_2})\mbox{d}x\\
&\leq \delta\psi^2(t)+C_{\delta}(Y(t)+1)^4,
\end{split}
\end{align}
\begin{align}\label{bl13}
\begin{split}
&-\int\nabla\mbox{curl}u\cdot\nabla H\cdot\nabla\mbox{curl}H
+\mbox{curl}u\cdot\nabla \nabla H\cdot\nabla\mbox{curl}H+\nabla u\cdot\nabla\mbox{curl}H\cdot\nabla\mbox{curl}H\\&+u\cdot\nabla\nabla\mbox{curl}H\cdot\nabla\mbox{curl}H
-\nabla\mbox{curl}H\cdot\nabla u\cdot\nabla\mbox{curl}H-\mbox{curl}H\cdot\nabla\nabla u\cdot\nabla\mbox{curl}H\\
&-\nabla H\cdot \nabla\mbox{curl}u\cdot\nabla\mbox{curl}H-H\cdot\nabla\nabla\mbox{curl}u\cdot\nabla\mbox{curl}H
+\mbox{div}u\nabla\mbox{curl}H\cdot\nabla\mbox{curl}H\\&+\mbox{curl}H\cdot\nabla\mbox{div}u\cdot\nabla\mbox{curl}H
+\nabla H\mbox{curl}\mbox{div}u\cdot\nabla\mbox{curl}H+H\cdot\nabla\mbox{curl}\mbox{div}u\cdot\nabla\mbox{curl}H\mbox{d}x\\
&\leq \delta\psi^2(t)+C(1+Y(t))^4,
\end{split}
\end{align}
\begin{align}\label{bl14}
-\int\nabla u|\nabla\rho|^2\mbox{d}x-\frac12\int\mbox{div}u|\nabla\rho|^2\mbox{d}x
-\int\rho\nabla\rho\cdot\nabla\mbox{div}u\mbox{d}x\leq \delta\psi^2(t)+C(1+Y(t))^4.
\end{align}
Incorporating \eqref{bl8j}-\eqref{bl14} with \eqref{bl3} leads to
\begin{align*}
\frac12\frac{\mbox{d}}{\mbox{d}x}Y^2(t)+\psi^2(t)\leq8\delta\psi^2(t)+C_{\delta}(1+Y^2(t))^2.
\end{align*}
Choosing $8\delta=\frac12$, from Lemma \ref{lem3.5}, we get $Y^2(t)\in L^1(0,T)$, and so using Gronwall's inequality gives that
\begin{align}\label{bl20}
Y^2(t)+\int_0^T\psi^2(t)\mbox{d}t\leq Y^2(0)+C.
\end{align}
From the system \ref{eq_EP_1}, it holds that
\begin{align*}
\mathcal{L}_{\rho_0}u_0&=\mu\Delta u_0+\nabla((\mu+\lambda(\rho_0))\mbox{div}u_0)=\mu\Delta u_0+\nabla(F_0-\mu\mbox{div}u_0+p(\rho_0)+\frac12|H_0|)\\
&=[\mu\nabla(\mbox{div}u_0)-\mu\nabla\times(\nabla\times u_0)]+\nabla(F_0-\mu\mbox{div}u_0+p(\rho_0)+\frac12|H_0|)
\end{align*}
with
\begin{align*}
F_0=(2\mu+\lambda(\rho_0))\mbox{div}u_0-p(\rho_0)-\frac12|H_0|,
\end{align*}
similarly one can define $\omega_0$, $B_0$, $L_0$.
Thus
\begin{align*}
\mathcal{L}_{\rho_0}u_0-\nabla p_0&=\nabla(F_0+\frac12|H_0|)-\mu\nabla\times(\nabla\times u_0)\\
&=\nabla(F_0+\frac12|H_0|)-\mu(\partial_{x_2}\omega_0,-\partial_{x_1}\omega_0)^t\\
&=((F_0+\frac12|H_0|)_{x_1}-\mu\partial_{x_2}\omega_0,(F_0
+\frac12|H_0|)_{x_2}+\mu\partial_{x_1}\omega_0)\\
&=\rho_0(L_0,B_0)^t.
\end{align*}
Hence, there exists $g\in L^2({\mathbb{T}^2})$ such that
\begin{align*}
\sqrt{\rho_0}g=\rho_0(L_0,B_0)^t,
\end{align*}
which gives that
\begin{align*}
Y^2(0)=\|\sqrt{\rho_0}(L_0,B_0)\|_2^2=\|\frac{\sqrt{\rho_0}g}{\sqrt{\rho_0}}\|_2^2\leq C.
\end{align*}
Therefore, from \eqref{bl20}, it holds that
\begin{align*}
Y^2(t)+\int_0^T\psi^2(t)\mbox{d}t\leq C.
\end{align*}
\end{proof}
The Lemma \ref{lem3.6} is proved.

Step 5. Upper bound of the density
\begin{lem}\label{lem3.7}
It holds that
\begin{align*}
\int_0^T\|(F+\frac12|H|^2,\omega)\|^3_{L^{\infty}}\mbox{d}t\leq C.
\end{align*}
\end{lem}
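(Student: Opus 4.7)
The plan is to dominate the $L^\infty$ norm of $F + \tfrac12|H|^2$ and $\omega$ by the $L^2_tH^1_x$ norm of $(B,L)$ controlled in Lemma \ref{lem3.6}, using a two-dimensional Gagliardo--Nirenberg interpolation followed by the $L^p$ elliptic estimate coming from the div-curl representation
\begin{align*}
\mu\omega_{x_1} + (F+\tfrac12|H|^2)_{x_2} = \rho B, \qquad -\mu\omega_{x_2} + (F+\tfrac12|H|^2)_{x_1} = \rho L.
\end{align*}

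First I would invoke Lemma \ref{Bernstein} in the form $\|f\|_\infty \le C(\|f\|_1 + \|f\|_2^{1/3}\|\nabla f\|_4^{2/3})$ (the choice $(q,m,r)=(\infty,4,2)$ in two dimensions producing $\theta=2/3$). Applied to $\omega$, which has zero spatial mean on $\mathbb{T}^2$, and to $F + \tfrac12|H|^2$ after subtracting its spatial average, this yields
\begin{align*}
\|(F+\tfrac12|H|^2,\omega)\|_\infty \le C\bigl(1 + \|\nabla(F+\tfrac12|H|^2,\omega)\|_4^{2/3}\bigr),
\end{align*}
since Lemma \ref{lem3.5} provides a uniform $L^2$ bound for both quantities (through $Z(t)\le C$), and the same bound controls the mean of $F+\tfrac12|H|^2$ via Cauchy--Schwarz.

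Next, the $L^4$ elliptic estimate already recorded as \eqref{bl7} gives $\|\nabla(F+\tfrac12|H|^2,\omega)\|_4 \le C\|\rho(B,L)\|_4$. Combining H\"older with the density bound $\|\rho\|_8 \le C$ from Lemma \ref{lem3.4} (finite because $\beta>3$) and the Poincar\'e-type bound \eqref{bl3.74} for $(B,L)$, I get
\begin{align*}
\|\rho(B,L)\|_4 \le \|\rho\|_8\,\|(B,L)\|_8 \le C\|\nabla(B,L)\|_2 \le C\psi(t),
\end{align*}
with $\psi$ as defined in \eqref{bl5}.

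Cubing the first display and integrating over $[0,T]$ finally produces
\begin{align*}
\int_0^T\|(F+\tfrac12|H|^2,\omega)\|_\infty^3\,\mbox{d}t \le CT + C\int_0^T\psi^2(t)\,\mbox{d}t \le C,
\end{align*}
the last inequality being exactly the conclusion of Lemma \ref{lem3.6}. There is no serious obstacle to close: the only delicate ingredient is ensuring $\|\rho\|_8$ is uniformly bounded in time, which rests on Lemma \ref{lem3.4} and the hypothesis $\beta>3$. Because every right-hand side is already summable in $t$, no new nonlinear Gronwall argument is needed at this stage.
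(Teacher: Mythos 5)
Your overall strategy (elliptic estimate \eqref{bl7} for $\nabla(F+\tfrac12|H|^2,\omega)$ in terms of $\rho(B,L)$, H\"older with Lemma \ref{lem3.4} and \eqref{bl3.74}, then $\int_0^T\psi^2\,\mbox{d}t\leq C$ from Lemma \ref{lem3.6}, then a Sobolev/Gagliardo--Nirenberg step to reach $L^\infty$) is the same as the paper's; the paper works with $p=3$ and the embedding $W^{1,3}(\mathbb{T}^2)\hookrightarrow L^\infty$, splitting $\rho^3|(B,L)|^3=\sqrt{\rho}|(B,L)|\cdot|(B,L)|^2\cdot\rho^{5/2}$ so as to use $\|\sqrt{\rho}(B,L)\|_2\leq Y(t)\leq C$, whereas you work with $p=4$ and interpolate $\|f\|_\infty\lesssim \|f\|_1+\|f\|_2^{1/3}\|\nabla f\|_4^{2/3}$. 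Both bookkeepings close, \emph{but your write-up has one genuine gap}: you claim that Lemma \ref{lem3.5} gives a uniform $L^2$ bound for $F+\tfrac12|H|^2$ ``through $Z(t)\leq C$''. It does not: $Z^2(t)$ only controls the weighted quantity $\int\frac{(F+\frac12|H|^2)^2}{2\mu+\lambda(\rho)}\mbox{d}x$, and removing the weight $\,(2\mu+\lambda(\rho))^{-1}=(2\mu+\rho^\beta)^{-1}$ requires an upper bound on $\rho$ --- which is exactly Lemma \ref{lem3.8}, proved \emph{after} and \emph{by means of} the present lemma, so the argument as written is circular. Moreover this bound is not a decorative detail in your scheme: with only the $L^1$ control \eqref{wf18} as low norm, the Gagliardo--Nirenberg exponent becomes $\theta=4/5$, so cubing produces $\|\nabla f\|_4^{12/5}$, which is \emph{not} integrable in time from $\int_0^T\psi^2\,\mbox{d}t\leq C$; one really needs a time-uniform $L^r$ bound with $r\geq 2$ to land on the exponent $2$.

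The gap is fixable with the tools already available at this stage, so the conclusion of your argument survives once the justification is repaired. For instance, by \eqref{bl7} with exponent $\tfrac32$, H\"older, Lemma \ref{lem3.6} and Lemma \ref{lem3.4},
\begin{align*}
\|\nabla(F+\tfrac12|H|^2)\|_{3/2}\leq C\|\rho(B,L)\|_{3/2}\leq C\|\sqrt{\rho}(B,L)\|_{2}\,\|\rho\|_{3}^{1/2}\leq CY(t)\leq C,
\end{align*}
and combining this time-uniform bound with $\|F+\tfrac12|H|^2\|_1\leq CZ(t)\leq C$ from \eqref{wf18} and the second form of Lemma \ref{Bernstein} (with $q=2$, $m=\tfrac32$, $r=1$) yields $\sup_{t\in[0,T]}\|F+\tfrac12|H|^2\|_2\leq C$ without any appeal to an upper bound on the density. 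With that substitute in place, the rest of your proof --- $\|\rho(B,L)\|_4\leq\|\rho\|_8\|(B,L)\|_8\leq C\|\nabla(B,L)\|_2\leq C\psi(t)$ via \eqref{bl3.74} and \eqref{bl6}, then cubing and integrating against $\int_0^T\psi^2\,\mbox{d}t\leq C$ --- goes through. (A minor remark: the boundedness of $\|\rho\|_8$ comes from Lemma \ref{lem3.4} alone and does not need $\beta>3$; that hypothesis was consumed earlier, in Lemma \ref{lem3.5}.)
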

\begin{proof}
From \eqref{bl7} with $p=3$, it holds that
\begin{align}
\begin{split}
\int_0^T\|\nabla(F+\frac12|H|^2,\omega)\|_3^3\mbox{d}t&\leq C\int_0^T\|\rho(B,L)\|_3^3\mbox{d}t\\
&\leq C\int_0^T\int\rho^3|(B,L)|^3\mbox{d}x\mbox{d}t
\\
&\leq C\int_0^T\int\rho^{\frac12}|(B,L)||(B,L)|^2\rho^{\frac52}\mbox{d}\mbox{d}t
\\&\leq C\int_0^T\|\rho^{\frac12}(B,L)\|_2\|(B,L)\|_8^2\|\rho\|_{10}^{\frac25}\mbox{d}t\\
&\leq C\int_0^T\|\nabla(B,L)\|_2^2\mbox{d}t\\
&\leq C\int_0^T\psi^2(t)\mbox{d}t
\leq C,
\end{split}
\end{align}
which together with Lemma \ref{lem3} gives that
\begin{align}\label{lem3.94}
\int_0^T\|(F+\frac12|H|^2,\omega)\|_{\infty}^3\leq\int_0^T\|(F+\frac12|H|^2,\omega)\|_{W^{1,3}}^3\mbox{d}t\leq C.
\end{align}
\end{proof}
\begin{lem}\label{lem3.8}
It holds that
\begin{align*}
\rho(x,t)\leq C, \ \forall (x,t)\in \mathbb{T}^2\times[0,T].
\end{align*}
\end{lem}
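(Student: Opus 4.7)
The plan is to derive a transport identity for $\theta(\rho)$ along particle trajectories, bound its right-hand side using Lemma~\ref{lem3.7}, and then invert $\theta$. Using $\theta'(\rho) = (2\mu+\lambda(\rho))/\rho$ together with the continuity equation $\rho_t + u\cdot\nabla\rho = -\rho\,\mbox{div}\,u$, one computes
$$\theta(\rho)_t + u\cdot\nabla\theta(\rho) \;=\; -(2\mu+\lambda(\rho))\,\mbox{div}\,u \;=\; -F - p(\rho) - \frac12|H|^2,$$
where the second equality simply rewrites $(2\mu+\lambda(\rho))\,\mbox{div}\,u$ via the definition of the effective viscous flux $F$.

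Next I would integrate this identity along the characteristic $X(t;x_0)$ driven by $u$, which is well defined because $u$ is Lipschitz thanks to the $H^2$ bounds already established. Dropping the non-positive contributions $-p(\rho)$ and $-\frac12|H|^2$ on the right yields the one-sided estimate
$$\theta(\rho)(X(t),t) \;\leq\; \theta(\rho_0)(x_0) + \int_0^t \|F(\cdot,s)\|_\infty\,ds.$$
The initial value $\theta(\rho_0)(x_0)$ is bounded because the hypothesis $0<\rho_0\in W^{2,q}(\mathbb{T}^2)$, together with the Sobolev embedding $W^{2,q}\hookrightarrow C(\mathbb{T}^2)$ on the compact torus, keeps $\rho_0$ between two positive constants. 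For the time integral, Lemma~\ref{lem_phase} gives $\||H|^2\|_\infty \leq C$, and Lemma~\ref{lem3.7} combined with H\"older's inequality in time yields
$$\int_0^T \|F\|_\infty\,dt \;\leq\; \int_0^T \left\|F+\frac12|H|^2\right\|_\infty dt + C \;\lesssim\; T^{2/3}\left(\int_0^T \left\|F+\frac12|H|^2\right\|_\infty^{3} dt\right)^{1/3} + C \;\leq\; C.$$

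Combining these two inputs produces a uniform upper bound $\theta(\rho)(x,t)\leq C$ on $\mathbb{T}^2\times[0,T]$. Since $\theta(\rho) = 2\mu\ln\rho + \frac{1}{\beta}(\rho^\beta-1)$ is strictly increasing on $(0,\infty)$ and satisfies $\theta(\rho)\to+\infty$ as $\rho\to+\infty$, this bound inverts to the desired pointwise estimate $\rho(x,t)\leq C$. The only substantive ingredient is the $L^1_t L^\infty_x$ control of $F$, which is precisely what Lemma~\ref{lem3.7} was crafted to supply; all remaining manipulations are routine use of the continuity equation along characteristics and no further confrontation with the magnetic coupling is required at this step.
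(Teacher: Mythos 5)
Your argument is essentially the paper's own proof: both integrate the transport identity for $\theta(\rho)$ along particle trajectories, control the right-hand side through the $L^1_t L^\infty_x$ bound on $F+\frac12|H|^2$ supplied by Lemma \ref{lem3.7} (you merely split off $\frac12|H|^2$ via Lemma \ref{lem_phase}, while the paper keeps the flux intact), and then invert the increasing function $\theta$. The only blemish is your justification of the characteristics: $H^2(\mathbb{T}^2)$ does not embed into $W^{1,\infty}$ and the $H^2$ bound on $u$ is in any case established only later; the flow is well defined here simply because the a priori estimate is performed on the smooth local solution, as the paper implicitly does.
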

\begin{proof}
Using the continuity equation of \eqref{eq_EP_1}, and the definition of $\theta(\rho)$, one gets
\begin{align}
\theta(\rho)_t+u\cdot\nabla\rho+F+\frac12|H|^2+p(\rho)=0.
\end{align}
Introducing the particle path $\vec{X}(x,t;\tau)$ through the point $(x,t)\in\mathbb{T}\times[0,T]$ defined by
\begin{align}
\begin{cases}
\frac{\mbox{d}\vec{X}(x,t;\tau)}{\mbox{d}\tau}=u(\vec{X}(x,t;\tau),\tau),\\
\vec{X}(x,t;\tau)|_{\tau=t}=x,
\end{cases}
\end{align}
then we can show that
\begin{align}
\frac{\mbox{d}}{\mbox{d}\tau}\theta(\rho)(\vec{X}(x,t;\tau),\tau)
=-p(\rho)(\vec{X}(x,t;\tau),\tau)-F(\vec{X}(x,t;\tau),\tau)-\frac12|H|^2(\vec{X}(x,t;\tau),\tau).
\end{align}
Integrating the above equality over $[0,t]$ implies that
\begin{align}\label{density1}
\theta(\rho)(x,t)-\theta(\rho_0)(\vec{X}_0)=-\int_0^t(p(\rho)+F+\frac12|H|^2)(\vec{X}(x,t;\tau),\tau)\mbox{d}\tau,
\end{align}
with $\vec{X}_0=\vec{X}(x,t;\tau)|_{\tau=0}$.

From \eqref{density1}, it follows that
\begin{align*}
&2\mu\ln\frac{\rho(x,t)}{\rho_0(\vec{X}_0)}+\frac{1}{\beta}\rho^{\beta}(x,t)+\int_0^tp(\rho)(\vec{X}(x,t;\tau),\tau)\mbox{d}\tau\\
&=\frac{1}{\beta}\rho_0^{\beta}(\vec{X}_0)-\int_0^t(F+\frac12|H|^2)(\vec{X}(x,t;\tau),\tau)\mbox{d}\tau.
\end{align*}
Thus
\begin{align*}
&2\mu\ln\frac{\rho(x,t)}{\rho_0(\vec{X}_0)}\leq\frac{1}{\beta}\|\rho_0\|_{\infty}^{\beta}
+\int_0^t\|(F+\frac12|H|^2)(\tau,\cdot)\|_{\infty}\mbox{d}\tau\leq C,
\end{align*}
which yields that
\begin{align*}
\rho(x,t)\leq C\rho_0(\vec{X}_0).
\end{align*}
Hence,
\begin{align*}
\rho(x,t)\leq C, \ \forall(x,t)\in\mathbb{T}^2\times[0,T].
\end{align*}
\end{proof}
\begin{lem}\label{lem3.9}
It holds that for any $1<p<+\infty$,
\begin{align*}
\int_0^T\|\mbox{div}u\|_{\infty}^3+\|\nabla(F+\frac12|H|^2,\omega)\|_{p}^2\mbox{d}t\leq C, \\
\rho(x,t)\geq m_1>0,\ \mbox{with} \ m_1\ \mbox{a positive constant}.
\end{align*}
\end{lem}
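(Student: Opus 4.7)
The plan is to handle the three assertions in the order $\|\nabla(F+\frac12|H|^2,\omega)\|_p$, $\|\operatorname{div}u\|_\infty$, then the lower bound of $\rho$, so that earlier estimates feed into later ones.

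First, I would use the elliptic identity \eqref{bl7}, namely
$\|\nabla(F+\frac12|H|^2,\omega)\|_p\le C\|\rho(B,L)\|_p$
for any $1<p<\infty$. Since Lemma \ref{lem3.8} provides $\rho\le C$ uniformly, this reduces to estimating $\|(B,L)\|_p$. By \eqref{bl3.74}, obtained via the mean value theorem and the Poincar\'e inequality, $\|(B,L)\|_p\le C\|\nabla(B,L)\|_2$, and \eqref{bl6} controls the latter by $\psi(t)$. Squaring and integrating in time, Lemma \ref{lem3.6} gives
\[
\int_0^T\|\nabla(F+\tfrac12|H|^2,\omega)\|_p^2\,dt\le C\int_0^T\psi^2(t)\,dt\le C.
\]

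Next, from the definition of $F$ one has the pointwise identity
$\operatorname{div}u=(F+\frac12|H|^2+p(\rho))/(2\mu+\lambda(\rho))$,
and since $2\mu+\lambda(\rho)\ge 2\mu$, $\rho\le C$, and $|H|\le C$ (Lemma \ref{lem_phase}), we get
$\|\operatorname{div}u\|_\infty\le C(1+\|F+\frac12|H|^2\|_\infty)$.
Cubing and integrating, Lemma \ref{lem3.7} yields $\int_0^T\|\operatorname{div}u\|_\infty^3\,dt\le C$.

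For the lower bound on $\rho$, I would follow exactly the characteristic method used in Lemma \ref{lem3.8}: integrate
$\theta(\rho)_t+u\cdot\nabla\theta(\rho)+p(\rho)+F+\frac12|H|^2=0$
along the flow $\vec X(x,t;\tau)$, giving
\[
2\mu\ln\rho(x,t)+\tfrac{1}{\beta}\rho^\beta(x,t)=\theta(\rho_0)(\vec X_0)+\tfrac{1}{\beta}-\int_0^t\bigl(p(\rho)+F+\tfrac12|H|^2\bigr)(\vec X(x,t;\tau),\tau)\,d\tau.
\]
Since $\rho_0\ge \underline\rho>0$, the term $\theta(\rho_0)(\vec X_0)$ is bounded below. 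The term $\int_0^t p(\rho)\,d\tau$ is bounded above by $CT$ via Lemma \ref{lem3.8}. For the remaining integral I apply H\"older in time and Lemma \ref{lem3.7}:
\[
\int_0^t\|F+\tfrac12|H|^2\|_\infty\,d\tau\le T^{2/3}\Bigl(\int_0^T\|F+\tfrac12|H|^2\|_\infty^3\,d\tau\Bigr)^{1/3}\le C.
\]
Dropping the nonnegative term $\frac{1}{\beta}\rho^\beta$ on the left, we conclude $2\mu\ln\rho(x,t)\ge -C$, i.e.\ $\rho(x,t)\ge m_1>0$.

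The main conceptual obstacle is the lower bound, because controlling $\int_0^t(F+\frac12|H|^2)(\vec X(\tau),\tau)\,d\tau$ in $L^\infty_x$ along characteristics is exactly what makes Lemma \ref{lem3.7} (itself relying on the second-order estimate Lemma \ref{lem3.6} combined with the Gagliardo--Nirenberg type embedding in Lemma \ref{lem3}) indispensable; without the $L^3_t L^\infty_x$ bound on $F+\frac12|H|^2$, $\theta(\rho)$ could in principle drift to $-\infty$ along the flow. The other two bounds are more routine consequences of the elliptic structure of the effective viscous flux together with the already established bounds on $(B,L)$ and $\rho$.
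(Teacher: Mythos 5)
Your treatment of the two time-integral bounds coincides with the paper's: both use \eqref{bl7} together with the upper bound $\rho\le C$, the Poincar\'e-type estimate \eqref{bl3.74} and Lemma \ref{lem3.6} to get $\int_0^T\|\nabla(F+\frac12|H|^2,\omega)\|_p^2\,\mbox{d}t\le C\int_0^T\|\nabla(B,L)\|_2^2\,\mbox{d}t\le C$, and the identity $\mbox{div}u=\frac{F+\frac12|H|^2+p(\rho)}{2\mu+\lambda(\rho)}$ with Lemma \ref{lem3.7} for $\int_0^T\|\mbox{div}u\|_\infty^3\,\mbox{d}t\le C$. For the lower bound on $\rho$ the paper gives no detail beyond ``from the continuity equation and the above estimate'', i.e.\ it intends $\frac{\mbox{d}}{\mbox{d}\tau}\ln\rho(\vec{X}(x,t;\tau),\tau)=-\mbox{div}u(\vec{X}(x,t;\tau),\tau)$ integrated along particle paths, combined with $\int_0^T\|\mbox{div}u\|_\infty\,\mbox{d}t\le C$ (H\"older in time plus the cube bound just proved); you instead re-run the $\theta(\rho)$ identity of Lemma \ref{lem3.8} and use the $L^3_tL^\infty_x$ control of $F+\frac12|H|^2$, which is an essentially equivalent route. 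One step of yours is misstated, however: from $2\mu\ln\rho+\frac1\beta\rho^\beta\ge -C$ you cannot ``drop the nonnegative term $\frac1\beta\rho^\beta$'' and retain the lower bound for $2\mu\ln\rho$ — discarding a nonnegative term on the left weakens the inequality in the wrong direction. The repair is immediate: move $\frac1\beta\rho^\beta$ to the right and bound it by a constant via the upper bound $\rho\le C$ of Lemma \ref{lem3.8} (which you already invoke), or work with the $\ln\rho$ form of the identity, which never produces the $\rho^\beta$ term. With that one-line fix your argument is complete and matches the paper's intent.
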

\begin{proof}
For any $1<p<+\infty$,
\begin{align*}
\begin{split}
\int_0^T\|\nabla(F+\frac12|H|^2,\omega)\|_{p}^2\mbox{d}t&\leq C\int_0^T\|\rho(B,L)\|_p^2\mbox{d}t\leq C\int_0^T\|(B,L)\|_p^2\mbox{d}t\\
&\leq C\int_0^T\|\nabla(B,L)\|_2^2\mbox{d}t\leq C.
\end{split}
\end{align*}
On the other hand, it holds that
\begin{align*}
\int_0^T\|\mbox{div}u\|_{\infty}^3\mbox{d}t\leq C\int_0^T\|F+\frac12|H|^2\|_{\infty}^3+\|p(\rho)\|_{\infty}^3\mbox{d}t\leq C.
\end{align*}
From the continuity equation and the above estimate, it is quite easy to show that
$$\rho(x,t)\geq m_1>0.$$
Thus we complete the proof of Lemma \ref{lem3.9}.
\end{proof}

\section{Higher order estimates} \label{sec_g100}
In this section we derive some uniform estimates on their higher order estimates by virtue of the approximate solutions and basic estimates.
\begin{lem}\label{lem4.1}
It holds that for any $1\leq p<+\infty$,
\begin{align}
\sup_{t\in[0,T]}\|(\nabla\rho,\nabla p(\rho))(t,\cdot)\|_p+\int_0^T\|\nabla u\|_{\infty}^2\mbox{d}t\leq C.
\end{align}
\end{lem}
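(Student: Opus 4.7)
The plan is to derive an evolution inequality for $\|\nabla\rho\|_p$ from the continuity equation and close it through a logarithmic Sobolev interpolation that simultaneously yields the $L^2(0,T;L^\infty)$ bound on $\nabla u$.

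First, I would apply $\nabla$ to the mass equation $\rho_t + u\cdot\nabla\rho + \rho\,\mbox{div}\,u = 0$, test against $p|\nabla\rho|^{p-2}\nabla\rho$, and integrate by parts in the transport term (absorbing the $\int\mbox{div}\,u\,|\nabla\rho|^p$ contribution) to obtain
\begin{align*}
\frac{d}{dt}\|\nabla\rho\|_p \lesssim \|\nabla u\|_\infty \|\nabla\rho\|_p + \|\rho\|_\infty\|\nabla\,\mbox{div}\,u\|_p.
\end{align*}
Since $\rho$ is pinched between a positive constant and $C$ by Lemmas \ref{lem3.8}--\ref{lem3.9}, and since $\nabla p(\rho)=\gamma\rho^{\gamma-1}\nabla\rho$ is pointwise comparable to $\nabla\rho$, it suffices to control $\nabla\,\mbox{div}\,u$. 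Differentiating the effective-flux identity $(2\mu+\lambda(\rho))\mbox{div}\,u = F + p(\rho) + \tfrac12|H|^2$ in $x$ and using $H\in L^\infty(0,T;H^2)$ (Lemma \ref{lem3.6}), I would bound
\begin{align*}
\|\nabla\,\mbox{div}\,u\|_p \lesssim \|\nabla(F+\tfrac12|H|^2)\|_p + (1+\|\mbox{div}\,u\|_\infty)\|\nabla\rho\|_p + 1.
\end{align*}

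Second, I would invoke the logarithmic Sobolev-type inequality
\begin{align*}
\|\nabla u\|_\infty \lesssim 1 + (\|\omega\|_\infty + \|\mbox{div}\,u\|_\infty)\,\ln\bigl(e+\|\nabla^2 u\|_p\bigr),
\end{align*}
valid for any fixed $p>2$, together with the Calder\'on--Zygmund bound $\|\nabla^2 u\|_p \lesssim \|\nabla\omega\|_p + \|\nabla\,\mbox{div}\,u\|_p$. Using \eqref{bl7} and the display for $\nabla\,\mbox{div}\,u$ above, the argument of the logarithm is dominated by $e + \|\nabla\rho\|_p + \|\nabla(F+\tfrac12|H|^2,\omega)\|_p$. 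Setting $Y(t) = e + \|\nabla\rho\|_p(t)$ and combining the two previous estimates yields
\begin{align*}
\frac{d}{dt}Y &\lesssim \bigl[1+\|\omega\|_\infty+\|\mbox{div}\,u\|_\infty\bigr]\,Y\ln Y \\
&\quad + \bigl[1+\|\omega\|_\infty+\|\mbox{div}\,u\|_\infty\bigr]\,Y\ln\bigl(e+\|\nabla(F+\tfrac12|H|^2,\omega)\|_p\bigr) + \|\nabla(F+\tfrac12|H|^2)\|_p.
\end{align*}

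Third, I would close by a logarithmic Gronwall argument. The factor $\|\omega\|_\infty+\|\mbox{div}\,u\|_\infty$ belongs to $L^3(0,T)\subset L^1(0,T)$ by Lemmas \ref{lem3.7} and \ref{lem3.9}, and $\|\nabla(F+\tfrac12|H|^2,\omega)\|_p\in L^2(0,T)$ by Lemma \ref{lem3.9}; a short application of H\"older and Young inequalities converts the right-hand side into $a(t)Y\ln Y + b(t)$ with $a,b\in L^1(0,T)$, so $\ln Y(t)$ stays uniformly bounded on $[0,T]$. Once $\|\nabla\rho\|_p$ is controlled, feeding back into the log-Sobolev inequality gives $\|\nabla u\|_\infty \lesssim 1 + \|\omega\|_\infty + \|\mbox{div}\,u\|_\infty$, and squaring and integrating in time, together with the $L^3_t$ bounds of Lemmas \ref{lem3.7} and \ref{lem3.9}, yields $\int_0^T\|\nabla u\|_\infty^2\,dt<\infty$. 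The main obstacle is the circularity that the estimate on $\|\nabla\rho\|_p$ requires $\|\nabla u\|_\infty$ while $\|\nabla u\|_\infty$ depends on $\|\nabla^2 u\|_p$ and hence on $\|\nabla\rho\|_p$; the logarithmic Sobolev inequality is precisely what makes this feedback loop tractable, and the $L^3_t$ integrability of $\|\omega\|_\infty + \|\mbox{div}\,u\|_\infty$ produced in the previous section is exactly what powers the Gronwall step.
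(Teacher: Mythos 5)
Your proposal follows essentially the same route as the paper's proof: the $L^p$ energy estimate for $\nabla\rho$ from the differentiated continuity equation, control of $\nabla\,\mbox{div}\,u$ through the effective flux $F+\tfrac12|H|^2$, the Beale--Kato--Majda logarithmic inequality with $\|\nabla^2u\|_p$ dominated by $\|\nabla\rho\|_p$ plus a quantity that is only square-integrable in time, and a logarithmic Gronwall argument powered by the $L^3_t$ bounds of Lemmas \ref{lem3.7} and \ref{lem3.9}, followed by the same bootstrap to all $p$. The only cosmetic differences are that the paper bounds $\|\nabla^2u\|_p$ via the elliptic system $\mathcal{L}_{\rho}u=\nabla p(\rho)+\rho(L,B)^t$ together with \eqref{bl3.74} rather than Calder\'on--Zygmund on $(\omega,\mbox{div}\,u)$ plus \eqref{bl7}, and in your last step the factor $\ln\bigl(e+\|\nabla(F+\tfrac12|H|^2,\omega)\|_p\bigr)$ cannot literally be dropped (it is not uniformly bounded in time), though your squaring-plus-H\"older argument with the $L^2_t$ and $L^{3}_t$ bounds still yields $\int_0^T\|\nabla u\|_{\infty}^2\,\mbox{d}t\leq C$, exactly as in the paper.
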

\begin{proof}
Applying the operator $\nabla$ to the continuity equation of \eqref{eq_EP_1}, then multiplying the resulted equation by $p|\nabla\rho|^{p-2}\nabla \rho$ with $p\geq 2$, integrating it in the space variable $x$ over $\mathbb{T}^2$ implies that
\begin{align*}
\frac{\mbox{d}}{\mbox{d}t}\|\nabla\rho\|_p^p&
=-(p-1)\int|\nabla\rho|^p\mbox{div}u\mbox{d}x-p\int|\nabla\rho|^{p-2}\nabla\rho\cdot\nabla u\cdot\nabla\rho\mbox{d}x\\&-p\int\rho|\nabla\rho|^{p-2}\nabla\rho\cdot\nabla\mbox{div}u\mbox{d}x\\
&\leq (p-1)\|\mbox{div}u\|_{\infty}\|\nabla\rho\|_p^p+p\|\nabla u\|_{\infty}\|\nabla\rho\|_p^p+p\|\nabla\mbox{div}u\|_{p}\|\nabla\rho\|_p^{p-1}\|\rho\|_{\infty}.
\end{align*}
 This gives that
 \begin{align}\label{density4.5}
 \begin{split}
 \frac{\mbox{d}}{\mbox{d}t}\|\nabla\rho\|_p &\leq C[\|\nabla u\|_{\infty}\|\nabla\rho\|_p+\|\nabla\mbox{div}u\|_{p}]\\&\leq C[\|\nabla u\|_{\infty}\|\nabla\rho\|_p+\|\nabla(\frac{F+\frac12|H|^2+p(\rho)}{2\mu+\lambda(\rho)})\|_{p}]\\
 &\leq C[(\|\nabla u\|_{\infty}+\|F+\frac12|H|^2\|_{\infty}+1)\|\nabla\rho\|_p+\|\nabla(F+\frac12|H|^2)\|_{p}].
 \end{split}
 \end{align}
 Since
 \begin{align}\label{elliptic4.6}
 \mathcal{L}_{\rho}u=\nabla p(\rho)+\rho(L,B)^t,
 \end{align}
 by elliptic estimates and \eqref{bl3.74}, we show that for any $1<p<+\infty$,
 \begin{align*}
 \|\nabla^2u\|_p&\leq C[\|\nabla p(\rho)\|_p+\|\rho(L,B)\|_p]\\&\leq C[\|\nabla \rho\|_p+\|(L,B)\|_p]\\&\leq C[\|\nabla \rho\|_p+\|\nabla(L,B)\|_2].
 \end{align*}
 From the Beal-Kato-Majda type inequality, it follows that
  \begin{align}\label{lem4.8}
  \begin{split}
 \|\nabla u\|_{\infty}&\leq C(\|\mbox{div}u\|_{\infty}+\|\omega\|_{\infty})\ln(\mbox{e}+\|\nabla^2u\|_3)\\&\leq C(\|\mbox{div}u\|_{\infty}+\|\omega\|_{\infty})\ln(\mbox{e}+\|\nabla\rho\|_3)
 \\
 &+C(\|\mbox{div}u\|_{\infty}+\|\omega\|_{\infty})\ln(\mbox{e}+\|\nabla(L,B)\|_2),
 \end{split}
 \end{align}
 which together with \eqref{density4.5} for $p=3$, leads to
 \begin{align*}
 \begin{split}
 \frac{\mbox{d}}{\mbox{d}t}\|\nabla\rho\|_3 &\leq C(\|\mbox{div}u\|_{\infty}+\|\omega\|_{\infty})\ln(\mbox{e}+\|\nabla\rho\|_3)\|\nabla\rho\|_3
 \\
 &+C[(\|\mbox{div}u\|_{\infty}+\|\omega\|_{\infty})\ln(\mbox{e}
 +\|\nabla(L,B)\|_2)\\
 &+\|F+\frac12|H|^2\|_{\infty}+1]\|\nabla\rho\|_3+C\|\nabla(F+\frac12|H|^2)\|_{p}.
 \end{split}
 \end{align*}
 From \eqref{lem3.94}, Lemma \ref{lem3.9} and the Gronwall's inequality, we get
 \begin{align}\label{nablarho}
 \sup_{t\in[0,T]}\|\nabla\rho\|_3\leq C.
 \end{align}
  Incorporating Lemma \ref{lem3.7}, Lemma \ref{lem3.9} and \eqref{lem4.8}-\eqref{nablarho} yields that
 \begin{align}
 \int_0^T\|\nabla u\|_{\infty}^2\mbox{d}t\leq C.
 \end{align}
 Again using Lemma \ref{lem3.7}, Lemma \ref{lem3.9} and \eqref{density4.5}, by Gronwall's inequality, one arrives at
 \begin{align}
 \sup_{t\in[0,T]}\|\nabla\rho\|_p\leq C(\|\nabla\rho_0\|_p+1),\ \forall p\in[1,+\infty).
 \end{align}
\end{proof}
\begin{lem}\label{lem4.2}
It holds that for any $1\leq p<+\infty$,
\begin{align*}
&\sup_{t\in[0,T]}[\|u(t,\cdot)\|_{\infty}+\|\nabla u\|_p+\|(\rho_t, p_t)\|_p+\|(\rho_t, p_t)\|_{H^1}+\|(\rho,p(\rho),u)\|_{H^2}]\\&+\int_0^T\|u\|_{H^3}^2\mbox{d}t\leq C.
\end{align*}
\end{lem}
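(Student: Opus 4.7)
The plan is to derive all the $H^2$-level estimates by leveraging the bounds from Lemmas~\ref{lem3.5}--\ref{lem3.9} together with Lemma~\ref{lem4.1}, the elliptic structure of the momentum equation (the Lam\'e-type identity $\mathcal{L}_\rho u = \nabla p(\rho) + \rho(L,B)^t$ derived in Lemma~\ref{lem4.1}), and standard transport estimates on the continuity equation.

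First I would upgrade the bounds on $u$. Applying $L^p$ elliptic theory to $\mathcal{L}_\rho u = \nabla p(\rho) + \rho(L,B)^t$, using $\sup_t\|\nabla\rho\|_p \leq C$ from Lemma~\ref{lem4.1} together with $\sup_t\|\sqrt{\rho}(B,L)\|_2 \leq C$ from Lemma~\ref{lem3.6} and the positive lower bound $\rho \geq m_1$ from Lemma~\ref{lem3.9}, one obtains $\sup_t\|\nabla^2 u\|_2 \leq C$. The 2D Sobolev embedding $H^2 \hookrightarrow W^{1,p}\cap L^\infty$ then yields $\sup_t(\|u\|_\infty + \|\nabla u\|_p) \leq C$ for every $p<\infty$. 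The $L^p$ bounds on $\rho_t$ and $p_t$ follow immediately from the identities $\rho_t = -u\cdot\nabla\rho - \rho\,\mbox{div}\,u$ and $p_t = -u\cdot\nabla p - \gamma p\,\mbox{div}\,u$ together with the above.

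The main step is the $H^2$ bound on $\rho$. I would apply $\partial_i\partial_j$ to the continuity equation, multiply by $\partial_i\partial_j\rho$, and integrate over $\mathbb{T}^2$. After integration by parts, the only delicate term is $-\int \rho\,\partial_i\partial_j\mbox{div}\,u\cdot\partial_i\partial_j\rho\,\mbox{d}x$, which carries third-order derivatives of $u$. To control it I substitute $\mbox{div}\,u = (F + \tfrac12|H|^2 + p(\rho))/(2\mu+\lambda(\rho))$. The contribution coming from $\partial_i\partial_j p(\rho) = p'(\rho)\partial_i\partial_j\rho + \text{lower order}$ produces $-\int\rho p'(\rho)(2\mu+\lambda(\rho))^{-1}|\nabla^2\rho|^2\,\mbox{d}x \leq 0$, which has the favorable sign and can be dropped. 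The remaining principal piece, involving $\nabla^2(F+\tfrac12|H|^2)$, is handled by Young's inequality: from the elliptic identities $\mu\omega_{x_1} + (F+\tfrac12|H|^2)_{x_2} = \rho B$ and $-\mu\omega_{x_2} + (F+\tfrac12|H|^2)_{x_1} = \rho L$ one deduces $\|\nabla^2(F+\tfrac12|H|^2)\|_2 \lesssim \|\nabla(\rho(B,L))\|_2 \lesssim \|\nabla\rho\|_4\|(B,L)\|_4 + \|\nabla(B,L)\|_2$, whose square is integrable in time by Lemma~\ref{lem3.6}. Lower-order commutator pieces involving $\nabla\rho$, $\nabla u$, and $\nabla H$ are all controlled by the previously derived bounds. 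Gronwall's inequality then yields $\sup_t\|\nabla^2\rho\|_2 \leq C$, and hence the same bound for $\nabla^2 p(\rho)$ since $\rho$ is bounded above and below.

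The remaining items follow quickly. The bounds $\|\nabla\rho_t\|_2$ and $\|\nabla p_t\|_2$ come directly from differentiating the continuity equation once more and substituting the now-available $H^2$ bounds on $\rho$, $p(\rho)$ and the $W^{1,p}$ bound on $u$. The $L^2_t H^3$ bound on $u$ follows from an $L^2$ elliptic estimate applied to $\mathcal{L}_\rho u = \nabla p + \rho(L,B)^t$ with one extra derivative, giving $\|\nabla^3 u\|_2 \lesssim \|\nabla^2 p\|_2 + \|\nabla\rho\|_4\|(B,L)\|_4 + \|\rho\|_\infty\|\nabla(B,L)\|_2$, whose right-hand side is square-integrable in time by Lemma~\ref{lem3.6}. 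The main obstacle throughout is the $H^2$ energy estimate for $\rho$ above: it is the only point where third-order derivatives of $u$ must be coupled with second-order derivatives of $\rho$, and closing the estimate forces one to exploit the effective-viscous-flux structure together with the $L^2_t$ regularity of $(B,L)$ from Section~3.
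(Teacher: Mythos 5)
Your proposal is correct in substance and most of it runs parallel to the paper: both start from the identity $\mathcal{L}_{\rho}u=\nabla p(\rho)+\rho(L,B)^t$, get $\sup_t\|u\|_{H^2}\leq C$ from $L^2$ elliptic theory and $\|\rho(L,B)\|_2\lesssim\|\sqrt{\rho}(L,B)\|_2\leq C$ (Lemma \ref{lem3.6}), pass to $\|u\|_{\infty}+\|\nabla u\|_p$ by Sobolev embedding, read off $\|(\rho_t,p_t)\|_p$ from the transport equations, and then perform a $\nabla^2$ energy estimate on the continuity equation closed by Gronwall using $\int_0^T\|\nabla u\|_{\infty}^2\mbox{d}t\leq C$ (Lemma \ref{lem4.1}) and $\int_0^T\|\nabla(B,L)\|_2^2\mbox{d}t\leq C$ (Lemma \ref{lem3.6}). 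The genuine difference is in the treatment of the third-order term $-\int\rho\,\nabla^2\rho\cdot\nabla^2\mbox{div}u\,\mbox{d}x$: the paper simply bounds it by $\|\rho\|_{\infty}\|\nabla^2\rho\|_2\|\nabla^3u\|_2$, then estimates $\|u\|_{H^3}\lesssim 1+\|\nabla^2p\|_2+\|\nabla(L,B)\|_2+\|\nabla u\|_{\infty}\|\nabla^2\rho\|_2$ from the differentiated Lam\'e system (including the commutator with $\nabla\lambda(\rho)\mbox{div}u$), and runs a coupled Gronwall argument for $\|(\nabla^2\rho,\nabla^2p)\|_2^2$; you instead substitute $\mbox{div}u=(F+\frac12|H|^2+p)/(2\mu+\lambda(\rho))$ directly, exploit the nonpositive sign of the $\rho p'(\rho)(2\mu+\lambda)^{-1}|\nabla^2\rho|^2$ piece, and control $\|\nabla^2(F+\frac12|H|^2)\|_2\lesssim\|\nabla(\rho(B,L))\|_2$ from the identity $\Delta(F+\frac12|H|^2)=(\rho B)_{x_2}+(\rho L)_{x_1}$, so that the density estimate decouples from $\nabla^3u$ and the $L^2_tH^3$ bound for $u$ is harvested afterwards. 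Your route buys a cleaner, decoupled Gronwall inequality and makes explicit use of the effective-viscous-flux sign structure; the paper's route is more mechanical and avoids expanding $\nabla^2$ of the quotient, which in your version generates additional terms that are not really ``lower order'': in particular the piece $\int\rho\,(F+\frac12|H|^2+p)\,\lambda'(\rho)(2\mu+\lambda)^{-2}|\nabla^2\rho|^2\,\mbox{d}x$ is quadratic in $\nabla^2\rho$ with a coefficient controlled only by $\|F+\frac12|H|^2\|_{\infty}+1$, so you must invoke Lemma \ref{lem3.7} (time integrability of $\|F+\frac12|H|^2\|_{\infty}$) to keep it Gronwall-compatible, and your displayed $H^3$ elliptic estimate should also carry the variable-coefficient commutators $\|\nabla u\|_{\infty}\|\nabla^2\rho\|_2$ and $\|\nabla\rho\|_4\|\nabla^2u\|_4$, both harmless once $\sup_t\|\nabla^2\rho\|_2\leq C$ is in hand. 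With these two points made explicit, your argument closes and yields the same conclusion, including $\|(\rho_t,p_t)\|_{H^1}$ via one more differentiation of the transport equations, exactly as you indicate.
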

\begin{proof}
From elliptic estimates and \eqref{elliptic4.6}, it holds that
\begin{align}
\begin{split}
\sup_{t\in[0,T]}\|u\|_{H^2}&\leq C\sup_{t\in[0,T]}(\|\nabla p(\rho)\|_2+\|\rho(L,B)\|_2)\\&\leq C\sup_{t\in[0,T]}(\|\nabla p(\rho)\|_2+\|\sqrt{\rho}(L,B)\|_2)\leq C.
\end{split}
\end{align}
By virtue of Sobolev embedding theorem, one gets
\begin{align}\label{elliptic4.15}
\sup_{t\in[0,T]}|u(x,t)|\leq C,\ \sup_{t\in[0,T]}\|\nabla u\|_p\leq C,\ \forall1\leq p<+\infty.
\end{align}
From
\begin{align*}
\rho_t=-u\cdot\nabla\rho-\rho\mbox{div}u
\end{align*}
and
\begin{align}\label{pressure}
p_t=-u\cdot\nabla p-\rho p'(\rho)\mbox{div}u,
\end{align}
together with the uniform upper bound of the density, Lemma \ref{lem4.1} and \eqref{elliptic4.15}, one can show that
\begin{align}
\sup_{t\in[0,T]}\|(\rho_t,p_t)\|_p\leq C, \ \forall p\in [1,+\infty).
\end{align}
Applying the operator $\nabla^2$ to the continuity equation in \eqref{eq_EP_1} and \eqref{pressure}, multiplying the resulted equation by $\nabla^2\rho$ and $\nabla^2p(\rho)$, integrating them over the torus $\mathbb{T}^2$, we can prove that
\begin{align}\label{density}
\begin{split}
\frac{\mbox{d}}{\mbox{d}t}\|\nabla^2\rho\|_2^2&\leq C[\|\nabla u\|_{\infty}\|\nabla^2\rho\|_2^2+\|\nabla\rho\|_3\|\nabla^2\rho\|_2\|\nabla^2u\|_6
+\|\rho\|_{L^{\infty}}\|\nabla^2\rho\|_2\|\nabla^3u\|_2]\\
&\leq C[(\|\nabla u\|_{\infty}+1)\|\nabla^2\rho\|_2^2+\|\nabla^3u\|_2^2+1],
\end{split}
\end{align}
\begin{align}\label{pressure1}
\begin{split}
\frac{\mbox{d}}{\mbox{d}t}\|\nabla^2p(\rho)\|_2^2&\leq C[\|\nabla^2u\|_6\|\nabla p\|_3\|\nabla^2p\|_2+\|\nabla\rho\|_3\|\nabla^2p\|_2\|\nabla^2u\|_6\|p'(\rho)\|_{\infty}
\\&+\|\nabla u\|_{\infty}\|\nabla^2p\|_2^2+\|p''(\rho)\rho\|_{L^{\infty}}\|\nabla^2u\|_6\|\nabla^2p\|_2\|\nabla\rho\|_6^2\\
&+\|p'(\rho)\rho\|_{L^{\infty}}\|\nabla^3u\|_2\|\nabla^2p\|_2+\|p'(\rho)\|_{L^{\infty}}\|\nabla u\|_{\infty}\|\nabla^2p\|_2\|\nabla^2\rho\|_2\\&+\|p''(\rho)\|_{L^{\infty}}\|\nabla u\|_{\infty}\|\nabla^2p\|_2\|\nabla\rho\|_4^2+\|p'''(\rho)\rho\|_{L^{\infty}}\|\nabla u\|_{\infty}\|\nabla^2p\|_2\|\nabla\rho\|_4^2\\
&+\|p''(\rho)\rho\|_{L^{\infty}}\|\nabla u\|_{\infty}\|\nabla^2p\|_2\|\nabla^2\rho\|_2]\\
&\leq C[(\|\nabla u\|_{\infty}+1)\|\nabla^2p\|_2^2+\|\nabla^3u\|_2^2+\|\nabla u\|_{\infty}\|\nabla^2\rho\|_2^2].
\end{split}
\end{align}
From \eqref{elliptic4.6}, it holds that
\begin{align*}
\mathcal{L}(\nabla u)=\nabla^2p(\rho)+\nabla[\rho(L,B)]+\nabla (\nabla\lambda(\rho)\mbox{div}u):=\phi.
\end{align*}
Then the standard elliptic estimates imply that
\begin{align*}
\|u\|_{H^3}&\leq C[\|u\|_{H^1}+\|\phi\|_{2}]\\
&\leq C[\|u\|_{H^1}+\|\nabla^2p\|_{2}+\|\nabla\rho\|_{4}\|(L,B)\|_{4}+\|\rho\|_{\infty}\|\nabla(L,B)\|_{2}\\
&+\|\nabla^2\rho\|_2\|\nabla u\|_{\infty}+\|\nabla\rho\|_3\|\nabla^2u\|_6],
\end{align*}
and
\begin{align*}
\|\nabla^2u\|_6\leq C(\|\nabla p{\rho}\|_{6}+\|\rho(L,B)\|_{6})\leq C(1+\|\nabla(L,B)\|_{2}).
\end{align*}
Hence,
\begin{align*}
\|u\|_{H^3}\leq C[1+\|\nabla^p\|_{2}+\|\nabla(L,B)\|_{2}+\|\nabla u\|_{\infty}\|\nabla^2\rho\|_2],
\end{align*}
which together with \eqref{density}-\eqref{pressure1} gives that
\begin{align*}
\frac{\mbox{d}}{\mbox{d}t}\|(\nabla^2\rho,\nabla^2p(\rho))\|_2^2&\leq C[(\|\nabla u\|_{\infty}^2+1)\|(\nabla^2\rho,\nabla^2p(\rho))\|_2^2+\|\nabla(L,B)\|_{2}+1].
\end{align*}
Using the Gronwall's inequality, one can show that
\begin{align*}
\|(\nabla^2\rho,\nabla^2p(\rho))\|_2^2&\leq (\|(\nabla^2\rho_0,\nabla^2p_0)\|_2^2+C\int_0^T(\|\nabla(L,B)\|_{2}+1)\mbox{d}t)\mbox{e}^{\int_0^T(\|\nabla u\|_{\infty}^2+1)\mbox{d}t}\\
&\leq C,
\end{align*}
which gives that
\begin{align*}
\sup_{t\in[0,T]}(\|(\rho, p(\rho))\|_{H^2}+\|(\rho_t, p_t)\|_{H^1})+\int_0^T\|u\|_{H^3}^2\mbox{d}t\leq C.
\end{align*}
The proof of Lemma \ref{lem4.2} is completed.
\end{proof}
\begin{lem}\label{lem4.3}
It holds that
\begin{align*}
\int_0^T\|H_t\|_2^2\mbox{d}t\leq C,
\end{align*}
\begin{align*}
\sup_{t\in[0,T]}\|\sqrt{\rho}u_t\|_2^2\mbox{d}t+\int_0^T\|u_t\|_{H^1}^2\mbox{d}t\leq C.
\end{align*}
\end{lem}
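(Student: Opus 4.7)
The plan is to split the lemma into two independent pieces. For the magnetic estimate I would simply solve the magnetic equation
$$H_t=\nu\Delta H+H\cdot\nabla u-u\cdot\nabla H-H\,\mbox{div}\,u$$
for $H_t$ and estimate each term in $L^2$ pointwise in $t$. By Lemma \ref{lem3.6} we have $\sup_{t}\|\nabla\mbox{curl}\,H\|_2\le C$ and $\int_0^T\|\nabla^2\mbox{curl}\,H\|_2^2\,dt\le C$, so $\|\Delta H\|_2$ is in $L^2(0,T)$; combined with Lemma \ref{lem_phase} (giving $\|H\|_\infty\le C$), Lemma \ref{lem3.1} (giving $\sup_t\|\nabla H\|_2$ along with its $L^2L^2$ integrability) and Lemma \ref{lem4.2} (giving $\sup_t\|u\|_\infty$ and $\sup_t\|\nabla u\|_2$), each term is uniformly in $L^2_x$ with $L^2_t$ integrability, hence $\int_0^T\|H_t\|_2^2\,dt\le C$.

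For the velocity estimates, the natural approach is to differentiate the momentum equation in $t$ and test with $u_t$. Writing the momentum equation as
$$\rho u_t+\rho u\cdot\nabla u+\nabla p=\nabla\times H\times H+\mu\Delta u+\nabla\bigl((\mu+\lambda(\rho))\mbox{div}\,u\bigr),$$
applying $\partial_t$, pairing with $u_t$, using the continuity equation and the identity $\int\rho u_{tt}\cdot u_t=\tfrac12\frac{d}{dt}\|\sqrt\rho u_t\|_2^2-\tfrac12\int\rho_t|u_t|^2$, and integrating by parts the diffusion terms, I expect to arrive at an identity of the form
\begin{equation*}
\begin{split}
&\tfrac12\tfrac{d}{dt}\|\sqrt\rho u_t\|_2^2+\mu\|\nabla u_t\|_2^2+\|(\mu+\lambda(\rho))^{1/2}\mbox{div}\,u_t\|_2^2\\
&=-\tfrac12\!\int\!\rho_t|u_t|^2-\!\int\!\rho_t u\cdot\nabla u\cdot u_t-\!\int\!\rho u_t\cdot\nabla u\cdot u_t-\!\int\!\rho u\cdot\nabla u_t\cdot u_t\\
&\quad +\!\int\! p_t\,\mbox{div}\,u_t+\!\int\!(\nabla\times H\times H)_t\cdot u_t-\!\int\!\lambda'(\rho)\rho_t\,\mbox{div}\,u\,\mbox{div}\,u_t .
\end{split}
\end{equation*}

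Each term on the right is then bounded by $\varepsilon\|\nabla u_t\|_2^2$ plus quantities that are integrable in $t$. Specifically, $\rho_t=-\mbox{div}(\rho u)$ and $p_t=-u\cdot\nabla p-\rho p'(\rho)\mbox{div}\,u$ are controlled in any $L^p$ by Lemma \ref{lem4.2}; $\|u_t\|_q$ for any $q<\infty$ is handled by the Poincar\'e-type interpolation of Lemma \ref{lem3} together with $\|\sqrt\rho u_t\|_2\sim\|u_t\|_2$ (since $\rho$ is bounded above and below by Lemma \ref{lem3.8} and Lemma \ref{lem3.9}); the convective terms use $\|u\|_\infty+\|\nabla u\|_p\le C$ from Lemma \ref{lem4.2}; and the magnetic term $(\nabla\times H\times H)_t\cdot u_t$ reduces, after splitting $H_t\cdot(\nabla\times H\times u_t)$ type terms, to products that can be bounded by $\|H\|_\infty\|\nabla H\|_4\|u_t\|_4+\|H_t\|_2\|\nabla H\|_4\|u_t\|_4+\cdots$, which are in $L^1(0,T)$ thanks to the $H_t$ bound just proved, Lemma \ref{lem3.6}, and Gagliardo--Nirenberg for $u_t$. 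The troublesome terms involving $\rho_t\mbox{div}\,u\,\mbox{div}\,u_t$ are absorbed by H\"older and Young inequalities into $\varepsilon\|\mbox{div}\,u_t\|_2^2$ and $L^1_t$ remainders.

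The initial value $\|\sqrt{\rho_0}u_t(0)\|_2$ is finite because the momentum equation at $t=0$ gives $\rho_0 u_t(0)=\mathcal L_{\rho_0}u_0-\rho_0u_0\cdot\nabla u_0+\nabla\times H_0\times H_0-\nabla p_0$, and the right hand side divided by $\sqrt{\rho_0}$ lies in $L^2$ by \eqref{initial data} and the positive lower bound on $\rho_0$. With this, Gronwall's inequality applied to the differential inequality yields both $\sup_{t\in[0,T]}\|\sqrt\rho u_t\|_2^2\le C$ and $\int_0^T\|u_t\|_{H^1}^2\,dt\le C$, the $H^1$ bound following because $\rho\ge m_1>0$ gives $\|u_t\|_2\le C\|\sqrt\rho u_t\|_2$ and the dissipation on the left controls $\|\nabla u_t\|_2$ after absorbing the small $\varepsilon$ terms. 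The main obstacle I foresee is the careful treatment of the coupled magnetic term $(\nabla\times H\times H)_t$ upon differentiation in $t$, where the newly available bound on $H_t$ is essential; but since that bound is obtained first, the argument closes.
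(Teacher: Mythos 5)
Your proposal is correct and follows essentially the same route as the paper: the $H_t$ bound comes directly from the magnetic equation together with Lemmas \ref{lem_phase}, \ref{lem3.1}, \ref{lem3.6} and \ref{lem4.2}, and the velocity bounds come from differentiating the momentum equation in time, testing with $u_t$, estimating the convective, pressure, magnetic and $\lambda(\rho)_t$ terms, bounding $\|\sqrt{\rho_0}u_t(0)\|_2$ via the momentum equation at $t=0$, and closing with Gronwall. The only cosmetic differences are that you keep the Lorentz force in the form $\nabla\times H\times H$ rather than rewriting it as $H\cdot\nabla H-\nabla(\tfrac12|H|^2)$, and you obtain $\|u_t\|_2\le C\|\sqrt{\rho}u_t\|_2$ from the lower bound on $\rho$ (Lemma \ref{lem3.9}) instead of the paper's detour through $u_t=(L,B)^t-u\cdot\nabla u+\tfrac1\rho H\cdot\nabla H$ and the Poincar\'e-type bound \eqref{bl3.74}; both are valid.
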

\begin{proof}
From the magnetic equation in \eqref{BL}, one gets
\begin{align*}
\begin{split}
\|H_t\|_2&\leq C[\|u\cdot\nabla H\|_2+\|\Delta H\|_2+\|H\cdot\nabla u\|_2+\|H\mbox{div}u\|_2]\\
&\leq C[\|u\|_{\infty}\|\nabla H\|_2+\|\Delta H\|_2+\|\nabla u\|_2\|H\|_{\infty}],
\end{split}
\end{align*}
which yields that
\begin{align*}
\int_0^T\|H_t\|_{2}^2\mbox{d}t\leq C.
\end{align*}
The momentum equation in \eqref{eq_EP_1} can be rewritten as
\begin{align}
\rho u_t+\rho u\cdot\nabla u+\nabla p(\rho)+\nabla (\frac12|H|^2)-H\cdot \nabla H=\mathcal{L}_{\rho}u:=\mu\Delta u+\nabla((\mu+\lambda(\rho))\mbox{div}u).
\end{align}
Applying $\partial_t$ to the above equation gives that
\begin{align}\label{lem4.26}
\begin{split}
&\rho u_{tt}+\rho u\cdot\nabla u_t+\nabla p(\rho)_t+\nabla (\frac12|H|^2_t)-H_t\cdot \nabla H-H\cdot\nabla H_t+\rho_tu_t\\
&=\mu\Delta u_t+\nabla((\mu+\lambda(\rho))\mbox{div}u_t)-\rho_tu\cdot\nabla u-\rho u_t\cdot\nabla u+\nabla(\lambda(\rho)_t\mbox{div}u).
\end{split}
\end{align}
Multiplying the above equation by $u_t$ and integrating the resulting equation over $\mathbb{T}^2$ gives that
\begin{align}\label{higerorder}
\begin{split}
&\frac12\frac{\mbox{d}}{\mbox{d}t}\int\rho|u_t|^2\mbox{d}x+\int(\mu|\nabla u_t|^2+(\mu+\lambda(\rho))|\mbox{div} u_t|^2)\mbox{d}x\\
&=-\int\nabla p(\rho)_t\cdot u_t\mbox{d}x-\int\nabla(\frac12|H|^2_t)\cdot u_t\mbox{d}x+\int H_t\cdot \nabla H\cdot u_t\mbox{d}x\\
&+\int H\cdot \nabla H_t\cdot u_t\mbox{d}x-\int\rho_t|u_t|^2\mbox{d}x-\int\rho_tu\cdot\nabla u\cdot u_t\mbox{d}x\\
&-\int\rho(u_t\cdot\nabla u)\cdot u_t\mbox{d}x+\int\nabla(\lambda(\rho)_t\mbox{div}u)\cdot u_t\mbox{d}x.
\end{split}
\end{align}
The terms on the right-hand side of \eqref{higerorder} can be estimated as
\begin{align*}
-\int\nabla p(\rho)_t\cdot u_t\mbox{d}x=\int p(\rho)_t \mbox{div}u_t\mbox{d}x\leq \frac{\mu}{8}\|\mbox{div}u_t\|_2^2+C\| p(\rho)_t\|_2^2\leq\frac{\mu}{8}\|\mbox{div}u_t\|_2^2+C,
\end{align*}
\begin{align*}
-\int\nabla(\frac12|H|^2_t)\cdot u_t\mbox{d}x=\int \frac12|H|^2_t \mbox{div}u_t\mbox{d}x=\int H\cdot H_t\mbox{div}u_t\mbox{d}x\leq \frac{\mu}{8}\|\mbox{div}u_t\|_2^2+C\| H_t\|_2^2,
\end{align*}
\begin{align*}
\int H_t\cdot \nabla H\cdot u_t\mbox{d}x=-\int H_t\cdot H\cdot \nabla u_t\mbox{d}x\leq \frac{\mu}{8}\|\nabla u_t\|_2^2+C\| H_t\|_2^2,
\end{align*}
\begin{align*}
\int H\cdot \nabla H_t\cdot u_t\mbox{d}x=-\int H\cdot H_t\cdot \nabla u_t\mbox{d}x\leq \frac{\mu}{8}\|\nabla u_t\|_2^2+C\| H_t\|_2^2,
\end{align*}
\begin{align*}
&-\int\rho_t|u_t|^2\mbox{d}x=\int\mbox{div}(\rho u) |u_t|^2\mbox{d}x=-2\int\rho u\cdot \nabla u_t\cdot u_t\mbox{d}x\\
&\leq \|\nabla u_t\|_2\|\sqrt{\rho} u_t\|_2\|\sqrt{\rho}\|_{\infty}\|u\|_{\infty}\leq \frac{\mu}{8}\|\nabla u_t\|_2^2+C\|\sqrt{\rho} u_t\|_2\|\sqrt{\rho}\|_{\infty}^2\|u\|_{\infty}^2\\
&\leq \frac{\mu}{8}\|\nabla u_t\|_2^2+C\|\sqrt{\rho} u_t\|_2,
\end{align*}
\begin{align*}
&-\int\rho_tu\cdot\nabla u\cdot u_t\mbox{d}x=\int\mbox{div}(\rho u)[(u\cdot \nabla u)\cdot u_t]\mbox{d}x=-\int\rho u\cdot\nabla[(u\cdot \nabla u)\cdot u_t]\mbox{d}x\\
&\leq \|\nabla u_t\|_2\|\nabla u\|_2\|\rho\|_{\infty}\|u\|_{\infty}^2+(\|\nabla u\|_4^2+\| u\|_{\infty}\|\nabla^2 u\|_{2})\|\sqrt{\rho}u_t\|_2\|\sqrt{\rho}\|_{\infty}\|u\|_{\infty}\\&\leq \frac{\mu}{8}\|\nabla u_t\|_2^2+C(\|\sqrt{\rho} u_t\|_2^2+\|\nabla u\|_{4}^4+\|(\nabla u, \nabla^2u)\|_{2}^2)\\
&\leq \frac{\mu}{8}\|\nabla u_t\|_2^2+C(\|\sqrt{\rho} u_t\|_2^2+1),
\end{align*}
\begin{align*}
-\int\rho(u_t\cdot\nabla u)\cdot u_t\mbox{d}x\leq \|\nabla u\|_{\infty}\|\sqrt{\rho} u_t\|_2^2,
\end{align*}
\begin{align*}
&\int\nabla(\lambda(\rho)_t\mbox{div}u)\cdot u_t\mbox{d}x=-\int\lambda(\rho)_t\mbox{div}u \mbox{div}u_t\mbox{d}x\\
&\leq \frac{\mu}{8}\|\mbox{div} u_t\|_2^2+C\|\lambda'{\rho}\|_{\infty}^2\|\rho_t\|_{H^1}^2\|\mbox{div}u\|_4^2\leq \frac{\mu}{8}\|\mbox{div} u_t\|_2^2+C,
\end{align*}
which combining with \eqref{higerorder} implies that
\begin{align*}
\|\sqrt{\rho}u_t\|_2^2+\int_0^t\|\nabla u_t\|_2^2\mbox{d}\tau
&\leq \|\sqrt{\rho_0}u_t(0)\|_2^2+\frac{\mu}{8}\|\mbox{div} u_t\|_2^2+C\int_0^t(\|\nabla u\|_{\infty}+1)\|\sqrt{\rho}u_t\|_2^2\mbox{d}\tau\\
&+C\int_0^t\|H_t\|_2^2\mbox{d}\tau+C.
\end{align*}
From the momentum equation of (\ref{eq_EP_1}), we have
\begin{align*}
\|\sqrt{\rho_0}u_t(0)\|_2^2&\leq\|\frac{\sqrt{\rho_0}}{\sqrt{\rho_0}}g\|_2^2
+\|\sqrt{\rho_0}\|^2_{\infty}\|u_0\|^2_{\infty}\|\nabla u_0\|_2^2+\|\frac{1}{\sqrt{\rho_0}}\|^2_{\infty}\|H_0\|^2_{\infty}\|\nabla H_0\|^2_{2}\\
&+\| H_0\|^2_{\infty}\|\nabla H_0\|_2^2
\leq C.
\end{align*}
Therefore, by the Gronwall's inequality, we get
\begin{align*}
\sup_{t\in[0,T]}\|\sqrt{\rho}u_t\|_2^2\mbox{d}t+\int_0^T\|\nabla u_t\|_{2}^2\mbox{d}t\leq C.
\end{align*}
Note that
\begin{align*}
u_t=(L,B)^t-u\cdot\nabla u+\frac{1}{\rho}H\cdot\nabla H,
\end{align*}
then for any $1\leq p<+\infty$,
\begin{align*}
\int_0^T\|u_t\|_p^2\mbox{d}t&\leq\int_0^T\|(L,B)\|_p^2+\|u\|_{\infty}^2\|\nabla u\|_p^2+\|\frac{1}{\rho}\|_{\infty}^2\|H\|_{\infty}^2\|\nabla H\|_p^2\mbox{d}t\\
&\leq\int_0^T\|\nabla(L,B)\|_2^2+\|u\|_{\infty}^2\|\nabla u\|_p^2+\|\frac{1}{\rho}\|_{\infty}^2\|H\|_{\infty}^2\|\nabla H\|_p^2\mbox{d}t\leq C.
\end{align*}
Thus, we arrive at
\begin{align*}
\int_0^T\|u_t\|_{H^1}^2\mbox{d}t\leq C.
\end{align*}
Hence the proof of Lemma \ref{lem4.3} is finished.
\end{proof}
\begin{lem}\label{lem4.4}
It holds that
\begin{align*}
\sup_{t\in[0,T]}\|(\rho_t,p(\rho)_t,\lambda(\rho)_t)\|_{H^1}+\int_0^T\|(\rho_{tt},p(\rho )_{tt}, \lambda(\rho)_{tt})\|_2^2\mbox{d}t\leq C.
\end{align*}
\end{lem}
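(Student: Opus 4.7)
The plan is to read off every required estimate from the continuity equation and its time derivative, treating $p(\rho)$ and $\lambda(\rho)=\rho^\beta$ purely by the chain rule and using the bounds already collected in Lemmas \ref{lem3.9}, \ref{lem4.1}, \ref{lem4.2} and \ref{lem4.3}.

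First I would handle $\|\lambda(\rho)_t\|_{H^1}$, since $\|\rho_t\|_{H^1}$ and $\|p(\rho)_t\|_{H^1}$ are already provided by Lemma \ref{lem4.2}. Writing $\lambda(\rho)_t=\beta\rho^{\beta-1}\rho_t$ and using $m_1\leq\rho\leq C$ together with Lemma \ref{lem4.1}, I get
\begin{align*}
\|\lambda(\rho)_t\|_2 &\leq C\|\rho_t\|_2,\\
\|\nabla\lambda(\rho)_t\|_2 &\leq C\big(\|\nabla\rho\|_4\|\rho_t\|_4+\|\nabla\rho_t\|_2\big),
\end{align*}
and both right-hand sides are bounded uniformly in $t$ by the previous lemmas.

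Next I would differentiate the continuity equation $\rho_t+\mbox{div}(\rho u)=0$ in time to obtain
\begin{align*}
\rho_{tt}=-u\cdot\nabla\rho_t-\rho_t\mbox{div}u-u_t\cdot\nabla\rho-\rho\,\mbox{div}u_t.
\end{align*}
Taking $L^2$ norms and using H\"older with the $L^\infty$-bound on $(\rho,u)$ from Lemma \ref{lem4.2} and the Sobolev embedding $H^1\hookrightarrow L^4$ on $\mathbb{T}^2$, I estimate
\begin{align*}
\|\rho_{tt}\|_2 \leq C\big(\|\nabla\rho_t\|_2+\|\rho_t\|_4\|\nabla u\|_4+\|u_t\|_4\|\nabla\rho\|_4+\|\nabla u_t\|_2\big).
\end{align*}
Squaring and integrating in $t\in[0,T]$, the first two terms are controlled by Lemma \ref{lem4.2}, while $\int_0^T(\|u_t\|_4^2+\|\nabla u_t\|_2^2)\,\mbox{d}t\leq C\int_0^T\|u_t\|_{H^1}^2\,\mbox{d}t\leq C$ by Lemma \ref{lem4.3}. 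Hence $\int_0^T\|\rho_{tt}\|_2^2\,\mbox{d}t\leq C$.

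Finally, I would pass to $p(\rho)_{tt}$ and $\lambda(\rho)_{tt}$ by the chain rule. From
\begin{align*}
p(\rho)_{tt}=p''(\rho)\rho_t^2+p'(\rho)\rho_{tt},\qquad \lambda(\rho)_{tt}=\lambda''(\rho)\rho_t^2+\lambda'(\rho)\rho_{tt},
\end{align*}
and $m_1\leq\rho\leq C$, I obtain $\|(p(\rho)_{tt},\lambda(\rho)_{tt})\|_2\leq C(\|\rho_t\|_4^2+\|\rho_{tt}\|_2)$, so that the time integral is bounded by the $\rho_{tt}$ estimate plus $\int_0^T\|\rho_t\|_4^4\,\mbox{d}t\leq C\sup_t\|\rho_t\|_{H^1}^4\cdot T$, which is finite by Lemma \ref{lem4.2}. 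There is no real obstacle here: the only point to watch is that both $\rho$ and $1/\rho$ are bounded (Lemma \ref{lem3.9}) so that $p'(\rho),p''(\rho),\lambda'(\rho),\lambda''(\rho)$ are all pointwise bounded; once this is in place the whole lemma is a bookkeeping exercise converting the bound $\int_0^T\|u_t\|_{H^1}^2\,\mbox{d}t\leq C$ into estimates on time derivatives of density-dependent quantities.
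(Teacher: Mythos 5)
Your argument is correct and follows the paper's route: you differentiate the continuity equation in time to get $\rho_{tt}=-u_t\cdot\nabla\rho-u\cdot\nabla\rho_t-\rho_t\,\mbox{div}u-\rho\,\mbox{div}u_t$, estimate it in $L^2$ exactly as the paper does, and close with $\int_0^T\|u_t\|_{H^1}^2\,\mbox{d}t\leq C$ from Lemma \ref{lem4.3} together with Lemmas \ref{lem4.1}--\ref{lem4.2}. Your only (harmless) deviation is treating $p(\rho)_t,\lambda(\rho)_t,p(\rho)_{tt},\lambda(\rho)_{tt}$ by the pointwise chain rule, correctly invoking the bounds $m_1\leq\rho\leq C$ from Lemmas \ref{lem3.8}--\ref{lem3.9} to control $p',p'',\lambda',\lambda''$, where the paper instead repeats the same computation on the transport equations for $p(\rho)$ and $\lambda(\rho)$ (``by a similar calculation''); the two are equivalent.
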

\begin{proof}
Based on the continuity equation, it holds that $$\rho_t=-u\cdot\nabla\rho-\rho\mbox{div}u$$ and $$\rho_{tt}=-u_t\cdot\nabla\rho-u\cdot\nabla\rho_t-\rho_t\mbox{div}u-\rho\mbox{div}u_t.$$ Clearly
\begin{align}
\sup_{t\in[0,T]}\|\nabla\rho_t\|_2\leq \sup_{t\in[0,T]}[\|\nabla\rho \|_4\|\nabla u\|_4+\|u\|_{\infty}\|\nabla^2\rho\|_2+\|\rho\|_{\infty}\|\nabla^2u\|_2]\leq C,
\end{align}
\begin{align}
\begin{split}
\int_0^T\|\rho_{tt}\|_2^2\mbox{d}t&\leq \int_0^T[\|u_t \|_4^2\|\nabla \rho\|_4^2+\|u\|_{\infty}^2\|\nabla\rho_t\|_2^2+\|\rho_t\|_{4}^2\|\nabla u\|_4^2+\|\rho\|_{\infty}^2\|\nabla u_t\|_2^2]\\
&\leq C\int_0^T(\|u_t\|_{H^1}^2+1)\mbox{d}t\leq C.
\end{split}
\end{align}
By a similar calculation, we also obtain
\begin{align*}
\sup_{t\in[0,T]}\|\nabla(p(\rho)_t,\lambda(\rho)_t)\|_{2}+\int_0^T\|(p(\rho )_{tt}, \lambda(\rho)_{tt})\|_2^2\mbox{d}t\leq C.
\end{align*}
Thus we finish the proof of Lemma \ref{lem4.4}.
\end{proof}
\begin{lem}\label{lem4.51}
It holds that for any $0<t\leq T$,
\begin{align*}
\sup_{t\in[0,T]}\|H\|_{H^2}^2+\int_0^T\|H\|_{H^3}^2\mbox{d}t\leq C.
\end{align*}
\end{lem}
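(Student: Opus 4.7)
The plan is to deduce the bound directly from the curl-based estimates on $H$ already established, combined with the divergence-free constraint. Since $\mbox{div}\, H=0$ on $\mathbb{T}^2$, one has the identity $-\Delta H=\nabla\times\mbox{curl}\, H$, so the usual Fourier/elliptic estimate on the torus (absorbing the zero Fourier mode into the low-order $L^2$ term) gives
\begin{align*}
\|\nabla^k H\|_2\leq C\bigl(\|H\|_2+\|\nabla^{k-1}\mbox{curl}\, H\|_2\bigr), \qquad k\geq 1.
\end{align*}
The desired bounds then reduce to controlling $\|H\|_2$, $\|\mbox{curl}\, H\|_2$, $\|\nabla\mbox{curl}\, H\|_2$ in $L^\infty_t$, and $\|\nabla^2\mbox{curl}\, H\|_2$ in $L^2_t$.

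The first three are in hand: Lemma \ref{lem3.1} gives $\sup_{t\in[0,T]}\|H\|_2\leq C$; Lemma \ref{lem3.5} gives $\sup_{t\in[0,T]}\|\mbox{curl}\, H\|_2\leq C$ through the functional $Z^2(t)$; Lemma \ref{lem3.6} gives $\sup_{t\in[0,T]}\|\nabla\mbox{curl}\, H\|_2\leq C$ through the functional $Y^2(t)$ in \eqref{bl4}. For the top-order piece, the required bound $\int_0^T\|\nabla^2\mbox{curl}\, H\|_2^2\,\mbox{d}t\leq C$ is exactly the $\nu|\nabla^2\mbox{curl}\, H|^2$ component of the dissipation $\psi^2(t)$ defined in \eqref{bl5}, and the proof of Lemma \ref{lem3.6} establishes $\int_0^T\psi^2(t)\,\mbox{d}t\leq C$. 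Applying the elliptic inequality with $k=1,2$ gives $\sup_t\|H\|_{H^2}\leq C$; applying it with $k=3$ and integrating in time gives $\int_0^T\|H\|_{H^3}^2\,\mbox{d}t\leq C$.

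The only delicate point is that the $\nabla^2\mbox{curl}\, H$ control is obtained inside the proof of Lemma \ref{lem3.6} rather than appearing in its statement. A fully self-contained alternative is to run an $H^2$ energy estimate directly on the magnetic equation $H_t+u\cdot\nabla H-\nu\Delta H-H\cdot\nabla u+H\mbox{div}\, u=0$: apply $\Delta$, pair with $\Delta H$ in $L^2(\mathbb{T}^2)$, and integrate by parts on the diffusion term to produce $\nu\|\nabla\Delta H\|_2^2$ on the left. The right-hand side consists of trilinear terms of the schematic form $\int|\nabla^a u||\nabla^b H||\nabla^2 H|\,\mbox{d}x$ with $a+b\leq 3$, each of which closes by H\"older together with the bounds $\sup_t\|u\|_{H^2}\leq C$, $\int_0^T\|u\|_{H^3}^2\,\mbox{d}t\leq C$ from Lemma \ref{lem4.2}, $\int_0^T\|\nabla u\|_\infty^2\,\mbox{d}t\leq C$ from Lemma \ref{lem4.1}, Lemma \ref{lem_phase} for $\|H\|_\infty$, and the already-obtained $\sup_t\|H\|_{H^2}\leq C$. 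Gronwall's inequality then closes the estimate, and no substantive obstacle arises beyond routine book-keeping.
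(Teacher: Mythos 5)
Your argument is correct, but your primary route differs from the paper's. The paper proves Lemma \ref{lem4.51} by a direct second-order energy estimate on the induction equation: it applies $\partial_{x_ix_j}$, pairs with $\partial_{x_ix_j}H$, absorbs $\tfrac{\nu}{2}\|\nabla^3H\|_2^2$, and closes by Gronwall using $\int_0^T\|\nabla u\|_{\infty}^2\,\mbox{d}t\leq C$ (Lemma \ref{lem4.1}) and $\int_0^T\|\nabla^3u\|_2^2\,\mbox{d}t\leq C$ (Lemma \ref{lem4.2}) --- this is essentially your ``fully self-contained alternative.'' Your main route instead exploits $\mbox{div}\,H=0$ on the torus, so that $\|\nabla H\|_2$, $\|\nabla^2H\|_2$, $\|\nabla^3H\|_2$ are controlled by $\|\mbox{curl}\,H\|_2$, $\|\nabla\mbox{curl}\,H\|_2$, $\|\nabla^2\mbox{curl}\,H\|_2$ (plus $\|H\|_2$), and these are already available from Lemma \ref{lem3.1}, Lemma \ref{lem3.5}, and the functionals $Y$, $\psi$ of \eqref{bl4}--\eqref{bl5} in Lemma \ref{lem3.6}; this is more economical, requiring no new energy estimate and not even the higher-order velocity bounds of Section \ref{sec_g100}. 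The one caveat, which you correctly flag, is that $\int_0^T\|\nabla^2\mbox{curl}\,H\|_2^2\,\mbox{d}t\leq C$ is established inside the proof of Lemma \ref{lem3.6} (via $\int_0^T\psi^2\,\mbox{d}t\leq C$) but is not recorded in its statement, so a clean write-up should either restate that bound or fall back on the energy argument, which is exactly what the paper does.
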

\begin{proof}
Applying $\partial_{x_ix_j},\  i,\  j=1,\ 2$, to the magnetic equation in \eqref{BL}, multiplying it by $\partial_{x_ix_j}H$, and then integrating the resulted equation in the space variable $x$, we obtain
\begin{align*}
&\frac12\frac{\mbox{d}}{\mbox{d}t}\|\nabla^2H\|_2^2+\nu\|\nabla^3H\|_2^2=-\int[\partial_{x_ix_j}u\cdot\nabla H+\partial_{x_i}u\cdot\nabla H_{x_j}+u\cdot\nabla H_{x_ix_j}\\
&+\partial_{x_j}u\cdot\nabla H_{x_i}-\partial_{x_ix_j}H\cdot\nabla u-\partial_{x_i}H\cdot\nabla u_{x_j}-\partial_{x_j}H\cdot\nabla u_{x_i}-H\cdot\nabla u_{x_ix_j}\\
&+\partial_{x_ix_j}H\mbox{div}u+\partial_{x_i}H\mbox{div}u_{x_j}
+\partial_{x_j}H\mbox{div}u_{x_i}+H\mbox{div}u_{x_ix_j}]\cdot \partial_{x_ix_j}H\mbox{d}x\\
&\leq \frac{\nu}{2}\|\nabla^3H\|_2^2+C(\|\nabla u\|_{\infty}+1)\|\nabla^2H\|_2^2+C\|\nabla^3u\|_2^2+C,
\end{align*}
that is,
\begin{align*}
&\frac12\frac{\mbox{d}}{\mbox{d}t}\|\nabla^2H\|_2^2+\nu\|\nabla^3H\|_2^2\leq C(\|\nabla u\|_{\infty}+1)\|\nabla^2H\|_2^2+C\|\nabla^3u\|_2^2+C.
\end{align*}
It follows from the Gronwall's inequality that
\begin{align*}
\sup_{t\in[0,T]}\|\nabla^2H\|_2^2+\int_0^T\|\nabla^3H\|_2^2\mbox{d}t&\leq (\|\nabla^2H_0^{\delta}\|_2^2+C\int_0^T(\|\nabla^3u\|_2^2+1)\mbox{d}t)\\
&\times\mbox{e}^{\int_0^T(\|\nabla u\|_{\infty}+1)\mbox{d}t}\leq C.
\end{align*}

Therefore, together with Lemma \ref{lem3.5}, Lemma \ref{lem4.1}, Lemma \ref{lem4.2} and elementary estimate \eqref{elementary estimate}, it holds that
\begin{align*}
\sup_{t\in[0,T]}\|\nabla^2H\|_2^2+\int_0^T\|H\|_{H^3}^2\mbox{d}t\leq C.
\end{align*}
\end{proof}

\begin{lem}\label{lem4.5}
It holds that
\begin{align*}
&\sup_{t\in[0,T]}\|H_t\|_2^2+\int_0^T\|H_t\|_{H^1}\mbox{d}t\leq C,\\
&\sup_{t\in[0,T]}\|H\|_{H^2}^2+\int_0^T\|\nabla^2H\|_{q}^2\mbox{d}t\leq C,
\end{align*}
\begin{align}\label{b2}
\begin{split}
&\sup_{t\in[0,T]}[t\|u_t\|_{H^1}^2+t\|u\|_{H^3}^2+t\|(\rho_{tt}, p(\rho)_{tt}, \lambda(\rho)_{tt})\|_{2}^2+\|(\rho,p(\rho))\|_{W^{2,q}}]\\
&+\int_0^Tt[\|\sqrt{\rho}u_{tt}\|_{2}^2+\|u_t\|_{H^2}^2+\|u\|_{H^4}^2]\mbox{d}t\leq C.
\end{split}
\end{align}
\end{lem}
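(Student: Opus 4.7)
The plan is to extract the new bounds by time-differentiating the magnetic equation and by a time-weighted second-order test of the momentum equation. First I differentiate the magnetic equation $H_t + u\cdot\nabla H - \nu\Delta H - H\cdot\nabla u + H\,\mbox{div}\,u = 0$ in $t$, test against $H_t$ and integrate by parts to obtain
\begin{align*}
\tfrac12\tfrac{\mbox{d}}{\mbox{d}t}\|H_t\|_2^2 + \nu\|\nabla H_t\|_2^2 \;=\; \text{cross terms},
\end{align*}
where the cross terms involve $u_t\cdot\nabla H$, $H\cdot\nabla u_t$, $H_t\cdot\nabla u$, $H_t\,\mbox{div}\,u$, etc. These are bounded by H\"older, Sobolev embedding, Lemma \ref{lem4.51} (control of $\|H\|_{H^2}$), Lemma \ref{lem4.1} ($\int_0^T\|\nabla u\|_\infty^2\,\mbox{d}t\le C$), and crucially the $L^2_t H^1_x$ bound on $u_t$ from Lemma \ref{lem4.3}. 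Gronwall then yields $\sup_t \|H_t\|_2^2 + \int_0^T \|H_t\|_{H^1}^2\,\mbox{d}t \le C$. The $L^2_t L^q_x$ bound on $\nabla^2 H$ follows by viewing the magnetic equation as $-\nu\Delta H = -H_t - u\cdot\nabla H + H\cdot\nabla u - H\,\mbox{div}\,u$ and applying elliptic $W^{2,q}$ regularity pointwise in $t$, since each source term now lies in $L^2_t L^q_x$ by Lemma \ref{lem4.2} and the bound just obtained.

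Next, for the weighted estimates on $(u_t, u, \rho_{tt})$, I differentiate the equation \eqref{lem4.26} once more in $t$ and test against $t\,u_{tt}$. After integration by parts, and using the continuity equation to convert $\rho_t$, $\lambda(\rho)_t$, $p(\rho)_t$ into expressions involving $u\cdot\nabla\rho$ and $\rho\,\mbox{div}\,u$, the leading terms assemble into
\begin{align*}
\tfrac{\mbox{d}}{\mbox{d}t}\Bigl[\tfrac{t}{2}\!\int\!\bigl(\mu|\nabla u_t|^2 + (\mu+\lambda(\rho))|\mbox{div}\,u_t|^2\bigr)\mbox{d}x\Bigr] + t\|\sqrt{\rho}\,u_{tt}\|_2^2,
\end{align*}
with a residual controlled by Lemmas \ref{lem4.1}--\ref{lem4.4} and a Cauchy--Schwarz absorption. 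The boundary term at $t=0$ vanishes thanks to the weight, which avoids assuming any $L^2$ bound on $u_{tt}(0)$ (the initial data are only $H^2$). A subsequent application of Gronwall's inequality produces $\sup_t\bigl(t\|u_t\|_{H^1}^2\bigr) + \int_0^T t\|\sqrt{\rho}\,u_{tt}\|_2^2\,\mbox{d}t \le C$, and the bound on $t\|(\rho_{tt},p(\rho)_{tt},\lambda(\rho)_{tt})\|_2^2$ is then read off from the continuity equation differentiated twice in $t$.

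Finally, with $t\,u_t \in L^\infty_t H^1_x$ in hand, I return to the momentum equation as the elliptic system $\mathcal L_\rho u = \rho u_t + \rho u\cdot\nabla u + \nabla p(\rho) - H\cdot\nabla H + \nabla(\tfrac12|H|^2)$. Standard elliptic regularity with variable coefficients (using $\rho\in L^\infty$, $\nabla\rho\in L^p$ from Lemma \ref{lem4.1}) yields the pointwise bound on $t\|u\|_{H^3}^2$, and differentiating once more and testing appropriately gives $\int_0^T t\|u\|_{H^4}^2\,\mbox{d}t\le C$. The $W^{2,q}$ bound on $(\rho,p(\rho))$ is obtained by applying $\nabla^2$ to the continuity equation (and to \eqref{pressure}), running a Gronwall argument in $L^q$ as in Lemma \ref{lem4.2} but with $\int_0^T \|\nabla u\|_\infty^2\,\mbox{d}t\le C$ in place of the earlier ingredient.

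The principal obstacle will be the variable-viscosity term $\lambda(\rho)\,\mbox{div}\,u$ once it is twice-differentiated in $t$: this generates $\lambda(\rho)_{tt}\,\mbox{div}\,u$, for which Lemma \ref{lem4.4} only provides $\lambda(\rho)_{tt}\in L^2_t L^2_x$. In the $t\,u_{tt}$-test, this term must be integrated by parts so that one spatial derivative falls on $u_{tt}$, after which it is absorbed into the dissipation $t\|\nabla u_{tt}\|_2^2$ coming from the viscous part; here the time weight is essential to avoid losing a power of $t$. A parallel delicate argument is required for $\nabla p(\rho)_{tt}$, where one exploits $p(\rho)_t\in L^\infty_t H^1_x$ from Lemma \ref{lem4.4} together with interpolation of $\nabla u_{tt}$ to close the estimate.
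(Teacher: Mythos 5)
Your first step (time-differentiating the magnetic equation, testing with $H_t$, then reading off $\|\nabla^2H\|_2$ and the $L^2_tL^q$ bound from the equation/elliptic regularity) is essentially the paper's argument. The problem is in the core weighted estimate, where your plan is both internally inconsistent and relies on a dissipation term you do not actually have. You say you will differentiate \eqref{lem4.26} (which is already $\partial_t$ of the momentum equation) \emph{once more} in $t$ and test against $t\,u_{tt}$; but the energy structure you display, namely $\frac{\mbox{d}}{\mbox{d}t}\bigl[\tfrac t2\int\mu|\nabla u_t|^2+(\mu+\lambda(\rho))|\mbox{div}\,u_t|^2\mbox{d}x\bigr]+t\|\sqrt{\rho}u_{tt}\|_2^2$, is what comes from testing \eqref{lem4.26} \emph{itself} against $t\,u_{tt}$. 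In that identity the only dissipation is $t\|\sqrt{\rho}u_{tt}\|_2^2$; there is no $t\|\nabla u_{tt}\|_2^2$ term, so your announced treatment of the hardest terms (absorbing $\lambda(\rho)_{tt}\mbox{div}u$ and $\nabla p(\rho)_{tt}$ into $t\|\nabla u_{tt}\|_2^2$) cannot be carried out. If instead you really work with the twice-differentiated equation so as to generate $t\|\nabla u_{tt}\|_2^2$, you need quantities that are not controlled at this stage: $H_{tt}$ (from $\partial_t(H\cdot\nabla H_t)$), terms with $\rho_{tt}u_t$, $p(\rho)_{tt}$, $(\lambda(\rho)_t\mbox{div}u)_t$ tested against $u_{tt}$, and, for the Gronwall step after multiplying by $t$, an a priori $L^1_t$ bound on $\|\sqrt{\rho}u_{tt}\|_2^2$, which is precisely what is being proved; that route is circular as described and in any case proves a different quantity ($\int t\|\nabla u_{tt}\|_2^2$) than the one in \eqref{b2}.

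The paper closes this step differently: it tests \eqref{lem4.26} against $u_{tt}$ (no extra time derivative), and the awkward pairings $\int p(\rho)_t\mbox{div}u_{tt}$, $\int\lambda(\rho)_t\mbox{div}u\,\mbox{div}u_{tt}$, $\int\rho_t u_t\cdot u_{tt}$, $\int\rho_t u\cdot\nabla u\cdot u_{tt}$ are integrated by parts \emph{in time}, so the second time derivatives land on $\rho$, $p(\rho)$, $\lambda(\rho)$, which Lemma \ref{lem4.4} controls in $L^2_tL^2_x$; the exact time-derivatives produced are moved into a corrected energy $G(t)$ shown to be comparable to $\|\nabla u_t\|_2^2$ up to constants (\eqref{gt}). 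The remaining term $\tfrac32\int\lambda(\rho)_t|\mbox{div}u_t|^2$ is handled via the transport equation for $\lambda(\rho)$ and the elliptic bound \eqref{lem4.43}, which expresses $\|\nabla^2u_t\|_2$ through $\|\sqrt{\rho}u_{tt}\|_2$ and known quantities, so it can be absorbed by the $\|\sqrt{\rho}u_{tt}\|_2^2$ dissipation. The resulting differential inequality \eqref{collect1} is then multiplied by $t$, and since $G\in L^1(0,T)$ (Lemma \ref{lem4.3}) one chooses $\tau_k\to0$ with $\tau_kG(\tau_k)\to0$ before applying Gronwall; your remark that "the boundary term at $t=0$ vanishes thanks to the weight" plays the same role, but only after the estimate has been closed by the time integration by parts, which is the missing idea in your proposal. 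Finally, note that $\sup_t t\|u_t\|_{H^1}^2$ (not just $t\|\nabla u_t\|_2^2$) is obtained in the paper through $u_t=(L,B)^t-u\cdot\nabla u+\tfrac1\rho H\cdot\nabla H$ and the Poincar\'e-type bound \eqref{bl3.74} for $(B,L)$, a step your outline omits, while your concluding elliptic/Gronwall treatment of $t\|u\|_{H^3}^2$ and $\|(\rho,p(\rho))\|_{W^{2,q}}$ is in line with the paper.
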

\begin{proof}
From the magnetic equation in \eqref{BL}, we see that
\begin{align*}
H_{tt}+u_t\cdot\nabla H+u\cdot\nabla H_t-\nu\Delta H_t-H_t\cdot\nabla u-H\cdot \nabla u_t+H_t\mbox{div}u+H\mbox{div}u_t=0.
\end{align*}
Multiplying the above equality by $H_t$ implies that
\begin{align}\label{b1}
\begin{split}
\frac12\frac{\mbox{d}}{\mbox{d}t}\|H_t\|_2^2+\nu\|\nabla H_t\|_2^2&=-\int u_t\cdot\nabla H\cdot H_t\mbox{d}x-\int u\cdot\nabla H_t\cdot H_t\mbox{d}x\\
&+\int H_t\cdot\nabla u\cdot H_t\mbox{d}x+\int H\cdot \nabla u_t\cdot H_t\mbox{d}x\\
&-\int |H_t|^2\mbox{div}u\mbox{d}x-\int H\mbox{div}u_t\cdot H_t\mbox{d}x.
\end{split}
\end{align}
The estimates of the terms on the right-hand side of \eqref{b1} are as follows:
\begin{align*}
-\int u_t\cdot\nabla H\cdot H_t\mbox{d}x&=\int \nabla H\cdot H_t \mbox{div}u_t\mbox{d}x+\int u_t\cdot H\cdot \nabla H_t\mbox{d}x\\
&\leq C(\|\nabla u_t\|_2^2+\|u_t\|_2^2+\|H_t\|_2^2)+\frac{\nu}{2}\|\nabla H_t\|_2^2\\
&\leq C(\| u_t\|_{H^1}^2+\|H_t\|_2^2)+\frac{\nu}{2}\|\nabla H_t\|_2^2,
\end{align*}
\begin{align*}
-\int u\cdot\nabla H_t\cdot H_t\mbox{d}x&=\frac12\int |H_t|^2 \mbox{div}u\mbox{d}x\leq C\|\nabla u\|_{\infty}\|H_t\|_2^2,
\end{align*}
\begin{align*}
\int H_t\cdot\nabla u\cdot H_t\mbox{d}x\leq C\|\nabla u\|_{\infty}\|H_t\|_2^2,
\end{align*}
\begin{align*}
\int H\cdot \nabla u_t\cdot H_t\mbox{d}x\leq C\|H\|_{\infty}\|\nabla u_t\|_2\|H_t\|_2\leq C(\|\nabla u_t\|_2^2+\|H_t\|_2^2),
\end{align*}
\begin{align*}
-\int |H_t|^2\mbox{div}u\mbox{d}x\leq C\|\nabla u\|_{\infty}\|H_t\|_2^2,
\end{align*}
\begin{align*}
-\int H\mbox{div}u_t\cdot H_t\mbox{d}x\leq C(\|\nabla u_t\|_2^2+\|H_t\|_2^2).
\end{align*}
Thus from \eqref{b1}, it is not difficult to prove that
\begin{align*}
\begin{split}
\frac{\mbox{d}}{\mbox{d}t}\|H_t\|_2^2+\|\nabla H_t\|_2^2\lesssim \|u_t\|_{H^1}^2+\|H_t\|_2^2(1+\|\nabla u\|_{\infty}),
\end{split}
\end{align*}
which by the Gronwall's inequality gives that
\begin{align*}
\sup_{t\in[0,T]}\|H_t\|_2^2+\int_0^T\|\nabla H_t\|_{2}^2\mbox{d}t\lesssim\mbox{e}^{\int_0^T(1+\|\nabla u\|_{\infty})\mbox{d}t}(\|H_t^{\delta}(0)\|_2^2+\int_0^T\|H_t\|_{H^1}^2\mbox{d}t)\leq C.
\end{align*}
By the magnetic equation of \eqref{BL}, we have
\begin{align*}
\|\nabla^2 H\|_2&\leq \|H_t\|_2+\|u\cdot\nabla H\|_2+\|H\cdot\nabla u\|_2+\|H\mbox{div}u\|_2\\
&\leq \|H_t\|_2+\| u\|_{\infty}\|\nabla H\|_2+\|H\|_{\infty}\|\nabla u\|_2\leq C.
\end{align*}
The estimate of $\int_0^T\|\nabla^2 H\|_q^2\mbox{d}t\leq C$ is fairly easy so we skip it.

It remains for us to show the estimate \eqref{b2}. For this,
Multiplying the equation \eqref{lem4.26} by $u_{tt}$, and then integrating in the space variable $x$ give that
\begin{align}\label{b3}
\begin{split}
&\|\sqrt{\rho}u_{tt}\|_2^2+\frac12\frac{\mbox{d}}{\mbox{d}t}\int\mu|\nabla u_t|^2+(\mu+\lambda(\rho))|\mbox{div}u_t|^2\mbox{d}x\\
&=\frac12\int\lambda(\rho)_t|\mbox{div}u_t|^2\mbox{d}x-\int[\nabla p+\frac12\nabla|H|^2_t-H_t\cdot\nabla H-H\cdot\nabla H_t\\
&+\rho_t u_t+\rho_t u\cdot\nabla u_t+\rho u\cdot \nabla u_t+\rho u_t\cdot\nabla u-\nabla(\lambda(\rho)_t\mbox{div}u)]\cdot u_{tt}\mbox{d}x.
\end{split}
\end{align}
Note that
\begin{align*}
&\int\nabla(\lambda(\rho)_t\mbox{div}u)\cdot u_{tt}\mbox{d}x=-\int\lambda(\rho)_t\mbox{div}u\mbox{div}u_{tt}\mbox{d}x\\
&=-\frac{\mbox{d}}{\mbox{d}t}\int\lambda(\rho)_t\mbox{div}u\mbox{div}u_{t}\mbox{d}x
+\int\lambda(\rho)_t|\mbox{div}u_{t}|^2+\lambda(\rho)_{tt}\mbox{div}u\mbox{div}u_{t}\mbox{d}x.
\end{align*}
Hence we write \eqref{b3} as
\begin{align}\label{b4}
\begin{split}
&\|\sqrt{\rho}u_{tt}\|_2^2+\frac12\frac{\mbox{d}}{\mbox{d}t}\int\mu|\nabla u_t|^2+(\mu+\lambda(\rho))|\mbox{div}u_t|^2+\lambda(\rho)_t\mbox{div}u\mbox{div}u_{t}\mbox{d}x\\
&=\frac32\int\lambda(\rho)_t|\mbox{div}u_t|^2\mbox{d}x-\int[\nabla p+\frac12\nabla|H|^2_t-H_t\cdot\nabla H-H\cdot\nabla H_t\\
&+\rho_t u_t+\rho_t u\cdot\nabla u_t+\rho u\cdot \nabla u_t+\rho u_t\cdot\nabla u]\cdot u_{tt}\mbox{d}x+\int\lambda(\rho)_{tt}\mbox{div}u\mbox{div}u_{t}\mbox{d}x.
\end{split}
\end{align}
Observe that $\lambda(\rho)$ satisfies the transport equation $\lambda(\rho)_t=-u\cdot\nabla \lambda(\rho)-\rho \lambda'(\rho)\mbox{div}u$, then it holds that
\begin{align}\label{lem4.42}
\begin{split}
&\frac32\int\lambda(\rho)_t|\mbox{div}u_t|^2\mbox{d}x=-\frac{3}{2}\int u\cdot\nabla \lambda(\rho)|\mbox{div}u_t|^2\mbox{d}x-\frac{3}{2}\int \rho \lambda'(\rho)|\mbox{div}u_t|^2\mbox{d}x\\
&=3\int  \lambda(\rho)\mbox{div}u_tu\cdot\nabla\mbox{div}u_t\mbox{d}x+\frac{3}{2}\int (\lambda(\rho)-\rho \lambda'(\rho))\mbox{div}u|\mbox{div}u_t|^2\mbox{d}x\\
&\leq C\|\lambda(\rho)u\|_{\infty}\|\mbox{div}u_t\|_2\|\nabla\mbox{div}u_t\|_2+C\|\lambda(\rho)-\rho \lambda'(\rho)\|_{\infty}\|\nabla u\|_{\infty}\|\lambda(\rho)u\|_{\infty}\|\mbox{div}u_t\|_2^2\\
&\leq C\|\mbox{div}u_t\|_2\|\nabla\mbox{div}u_t\|_2+C\|\nabla u\|_{\infty}\|\mbox{div}u_t\|_2^2.
\end{split}
\end{align}
From \eqref{lem4.26}, it follows that
\begin{align*}
\mathcal{L}_{\rho}u_t=\rho u_{tt}+\rho_tu_t+(\rho u\cdot\nabla u)_t+\nabla p(\rho)_t+\nabla (\frac12|H|^2_t)-(H\cdot\nabla H)_t-\nabla(\lambda(\rho)_t\mbox{div}u).
\end{align*}
Then the standard elliptic estimates show that
\begin{align}\label{lem4.43}
\begin{split}
\|\nabla^2u_t\|_2&\leq C[\|\sqrt{\rho}\|_{\infty}\|\sqrt{\rho}u_{tt}\|_2+\|\rho_t\|_4\|u_t\|_4+\|\rho_t\|_4\|\nabla u\|_4\|u\|_{\infty}\\
&+\|\nabla u\|_4\| u_t\|_4\|\rho\|_{\infty}+\|u\|_{\infty}\|\rho\|_{\infty}\|\nabla u_t\|_2+\|\nabla p(\rho)_t\|_2\\
&+\|\nabla H\|_4\|H_t\|_4+\|H\|_{\infty}\|\nabla H_t\|_2+\|\nabla \lambda(\rho)_t\|_2\|\nabla u\|_{\infty}\\
&+\| \lambda(\rho)_t\|_4\|\nabla^2 u\|_4]\\
&\leq C[\|\sqrt{\rho}u_{tt}\|_2+\| u_t\|_4+\|H_t\|_4+\|\nabla u_t\|_2+\|\nabla H_t\|_2\\
&+\|\nabla^2 u\|_4+\|\nabla u\|_{\infty}+1]\\
&\leq C[\|\sqrt{\rho}u_{tt}\|_2+\| u_t\|_4+\|H_t\|_4+\|\nabla u_t\|_2+\|\nabla H_t\|_2\\
&+\|\nabla^3 u\|_2+\|\nabla u\|_{\infty}+1].
\end{split}
\end{align}
Plugging \eqref{lem4.43} into \eqref{lem4.42} leads to
\begin{align}\label{lem4.44}
\begin{split}
&\frac32\int\lambda(\rho)_t|\mbox{div}u_t|^2\mbox{d}x\leq \frac{1}{16}\|\sqrt{\rho}u_{tt}\|_2^2+C(\|u_t\|_4^2+\|H_t\|_4^2+\|\nabla H_t\|_2^2\\&+\|\nabla^3 u\|_2^2+\|\nabla u\|_{\infty}^2)+C(\|\nabla u\|_{\infty}+1)\|\nabla u_t\|_2^2.
\end{split}
\end{align}
On the other hand, it holds that
\begin{align}
\begin{split}
&-\int\nabla p(\rho)_t\cdot u_{tt}\mbox{d}x=\int p(\rho)_t\mbox{div}u_{tt}\mbox{d}x\\
&=\frac{\mbox{d}}{\mbox{d}t}\int p(\rho)_t\mbox{div}u_t\mbox{d}x-\int p(\rho)_{tt}\mbox{div}u_t\mbox{d}x\\
&\leq \frac{\mbox{d}}{\mbox{d}t}\int p(\rho)_t\mbox{div}u_t\mbox{d}x+\|p(\rho)_{tt}\|_2^2+\|\mbox{div}u_t\|_2^2,
\end{split}
\end{align}
\begin{align}
\begin{split}
\int\rho_tu_t\cdot u_{tt}\mbox{d}x&=-\int\rho_t (\frac{|u_{t}|^2}{2})_t\mbox{d}x=-\frac{\mbox{d}}{\mbox{d}t}\int\rho_t \frac{|u_{t}|^2}{2}\mbox{d}x+\int\rho_{tt}\frac{|u_{t}|^2}{2}\mbox{d}x\\
&=-\frac{\mbox{d}}{\mbox{d}t}\int\rho_t \frac{|u_{t}|^2}{2}\mbox{d}x-\int\mbox{div}(\rho u)_t\frac{|u_{t}|^2}{2}\mbox{d}x\\
&=-\frac{\mbox{d}}{\mbox{d}t}\int\rho_t \frac{|u_{t}|^2}{2}\mbox{d}x-\int(\rho u)_t\cdot\nabla u_t\cdot u_{t}\mbox{d}x\\
&\leq -\frac{\mbox{d}}{\mbox{d}t}\int\rho_t \frac{|u_{t}|^2}{2}\mbox{d}x+\|\sqrt{\rho}\|_{\infty}\|\sqrt{\rho}u_t\|_2\|u_t\|_4\|\nabla u_t\|_4\\
&+\|u\|_{\infty}\|\rho_t\|_4\|u_t\|_4\|\nabla u_t\|_2\\
&\leq -\frac{\mbox{d}}{\mbox{d}t}\int\rho_t \frac{|u_{t}|^2}{2}\mbox{d}x+C(\|u_t\|_4\|\nabla u_t\|_4+\|u_t\|_4\|\nabla u_t\|_2)\\
&\leq -\frac{\mbox{d}}{\mbox{d}t}\int\rho_t \frac{|u_{t}|^2}{2}\mbox{d}x+C(\|u_t\|_4\|\nabla^2 u_t\|_2+\|u_t\|_4\|\nabla u_t\|_2)\\
&\leq -\frac{\mbox{d}}{\mbox{d}t}\int\rho_t \frac{|u_{t}|^2}{2}\mbox{d}x+C\|u_t\|_4[\|\sqrt{\rho}u_{tt}\|_2+\| u_t\|_4+\|H_t\|_4\\
&+\|\nabla u_t\|_2+\|\nabla H_t\|_2+\|\nabla^3 u\|_2+\|\nabla u\|_{\infty}+1]\\
&\leq -\frac{\mbox{d}}{\mbox{d}t}\int\rho_t \frac{|u_{t}|^2}{2}\mbox{d}x+\frac{1}{16}\|\sqrt{\rho}u_{tt}\|_2^2+C[\|u_t\|_4^2+\|\nabla u_t\|_2^2\\&+\|\nabla H_t\|_2^2+\|\nabla^3 u\|_2^2+\|\nabla u\|_{\infty}^2+1],
\end{split}
\end{align}
\begin{align}
\begin{split}
&-\int\nabla (\frac12|H|^2_t)\cdot u_{tt}\mbox{d}x=-\int\nabla H\cdot H_t\cdot u_{tt}\mbox{d}x-\int H\cdot \nabla H_t\cdot u_{tt}\mbox{d}x\\
&\leq\|\sqrt{\rho}u_{tt}\|_2\|\frac{1}{\sqrt{\rho}}\|_{\infty}\|\nabla H\|_4\|H_t\|_4+\|H\|_{\infty}\|\nabla H_t\|_2\|\sqrt{\rho}u_{tt}\|_2\|\frac{1}{\sqrt{\rho}}\|_{\infty}\\
&\leq \frac{1}{16}\|\sqrt{\rho}u_{tt}\|_2^2+C\|\nabla H_t\|_2^2,
\end{split}
\end{align}
\begin{align}
\begin{split}
\int H_t\cdot\nabla H\cdot u_{tt}\mbox{d}x\leq \|\sqrt{\rho}u_{tt}\|_2\|\frac{1}{\sqrt{\rho}}\|_{\infty}\|\nabla H\|_4\|H_t\|_4
\leq \frac{1}{16}\|\sqrt{\rho}u_{tt}\|_2^2+C\|\nabla H_t\|_2^2,
\end{split}
\end{align}
\begin{align}
\begin{split}
\int H\cdot\nabla H_t\cdot u_{tt}\mbox{d}x\leq \|\sqrt{\rho}u_{tt}\|_2\|\frac{1}{\sqrt{\rho}}\|_{\infty}\| H\|_{\infty}\|\nabla H_t\|_2
\leq \frac{1}{16}\|\sqrt{\rho}u_{tt}\|_2^2+C\|\nabla H_t\|_2^2,
\end{split}
\end{align}
\begin{align}
\begin{split}
&-\int \rho_tu\cdot\nabla u\cdot u_{tt}\mbox{d}x\\&=-\frac{\mbox{d}}{\mbox{d}t}\int\rho_tu\cdot\nabla u\cdot u_{t}\mbox{d}x+\int\rho_{tt}u\cdot\nabla u\cdot u_{t}\mbox{d}x
+\int\rho_tu_t\cdot\nabla u\cdot u_{t}\mbox{d}x\\&+\int\rho_tu\cdot\nabla u_t\cdot u_{t}\mbox{d}x\\
&\leq -\frac{\mbox{d}}{\mbox{d}t}\int\rho_tu\cdot\nabla u\cdot u_{t}\mbox{d}x+\|\rho_{tt}\|_2\|u\|_{\infty}\|\nabla u\|_4\|u_t\|_4
+\|\rho_{t}\|_4\|\nabla u\|_4\|u_t\|_4^2\\&+\|\rho_{t}\|_4\|u\|_{\infty}\|\nabla u_t\|_2\|u_t\|_4\\
&\leq -\frac{\mbox{d}}{\mbox{d}t}\int\rho_tu\cdot\nabla u\cdot u_{t}\mbox{d}x+C(\|\rho_{tt}\|_2\|u_t\|_4+\|u_t\|_4^2+\|\nabla u_t\|_2\|u_t\|_4)\\
&\leq-\frac{\mbox{d}}{\mbox{d}t}\int\rho_tu\cdot\nabla u\cdot u_{t}\mbox{d}x+C(\|\rho_{tt}\|_2^2+\|u_t\|_4^2+\|\nabla u_t\|_2^2),
\end{split}
\end{align}
\begin{align}
-\int\rho u\cdot \nabla u_t\cdot u_{tt}\mbox{d}x\leq \|\sqrt{\rho}u_{tt}\|_2\|\sqrt{\rho}u\|_{\infty}\|\nabla u_t\|_2\leq \frac{1}{16}\|\sqrt{\rho}u_{tt}\|_2^2+C\|\nabla u_t\|_2^2,
\end{align}
\begin{align}
-\int\rho u_t\cdot \nabla u\cdot u_{tt}\mbox{d}x\leq \|\sqrt{\rho}u_{tt}\|_2\|\sqrt{\rho}\|_{\infty}\|\nabla u\|_4\|u_t\|_4\leq \frac{1}{16}\|\sqrt{\rho}u_{tt}\|_2^2+C\| u_t\|_4^2
\end{align}
and
\begin{align}
\int\lambda(\rho)_{tt}\mbox{div}u\mbox{div}u_{t}\mbox{d}x\leq \|\lambda(\rho)_{tt}\|_2\|\nabla u\|_{\infty}\|\nabla u_t\|_2\leq \frac{1}{2}(\|\lambda(\rho)_{tt}\|_2^2+\|\nabla u\|_{\infty}^2\|\nabla u_t\|_2^2).
\end{align}
Collecting all the above estimates and plugging them into \eqref{b4} yields that
\begin{align}\label{collect}
\begin{split}
&\frac12\|\sqrt{\rho}u_{tt}\|_2^2+\frac{\mbox{d}}{\mbox{d}t}\int\mu|\nabla u_t|^2+(\mu+\lambda(\rho))|\mbox{div}u_t|^2+\lambda(\rho)_t\mbox{div}u\mbox{div}u_{t}
\\&-p(\rho)_t\mbox{div}u_t+\rho_t\frac{|u_t|^2}{2}+\rho_tu\cdot\nabla u\cdot u_t\mbox{d}x\\
&\leq C[\|(\rho_{tt}, p(\rho)_{tt},\lambda(\rho)_{tt})\|_2^2+\| u_t\|_4^2+\|H_t\|_4^2+\|\nabla H_t\|_2^2+\|\nabla^3 u\|_2^2\\
&+(\|\nabla u\|_{\infty}^2+1)(\|\nabla u_t\|_2^2+1)].
\end{split}
\end{align}
Notice that
\begin{align*}
|\int\lambda(\rho)_t\mbox{div}u\mbox{div}u_{t}\mbox{d}x|&\leq \|\lambda(\rho)_{t}\|_4\|\mbox{div}u\|_4\|\nabla u_t\|_4\\
&\leq \frac{\mu}{8}\|\nabla u_t\|_2^2+C\|\lambda(\rho)_{t}\|_4^2\|\nabla u\|_4^2\\
&\leq \frac{\mu}{8}\|\nabla u_t\|_2^2+C,
\end{align*}
\begin{align*}
|-\int p(\rho)_t\mbox{div}u_t\mbox{d}x|\leq \frac{\mu}{8}\|\nabla u_t\|_2^2+C\|p(\rho)_{t}\|_2^2\leq \frac{\mu}{8}\|\nabla u_t\|_2^2+C,
\end{align*}
\begin{align*}
|\int\rho_t\frac{|u_t|^2}{2}\mbox{d}x|&=|\int\mbox{div}(\rho u)\frac{|u_t|^2}{2}\mbox{d}x|=|\int\rho u\cdot\nabla u_t\cdot u_t\mbox{d}x|\\
&\leq \|\sqrt{\rho }u_t\|_2\|\sqrt{\rho}u\|_{\infty}\|\nabla u_t\|_2\leq \frac{\mu}{8}\|\nabla u_t\|_2^2+C,
\end{align*}
\begin{align*}
&|\int\rho_tu\cdot\nabla u\cdot u_t\mbox{d}x|\\&=|\int\mbox{div}(\rho u)(u\cdot\nabla u\cdot u_t)\mbox{d}x|=|\int\rho u\cdot\nabla (u\cdot\nabla u\cdot u_t)\mbox{d}x|\\
&\leq \|\sqrt{\rho}u_t\|_2\|\sqrt{\rho}u\|_{\infty}(\|\nabla u\|_4^2+\|u\|_{\infty}\|\nabla^2u\|_2)+\|\rho|u|^2\|_{\infty}\|\nabla u_t\|_2\|\nabla u\|_2\\
&\leq \frac{\mu}{8}\|\nabla u_t\|_2^2+C.
\end{align*}
Thus, for some positive constant $C$, $C_1$, we have
\begin{align}\label{gt}
C_1(\|\nabla u_t\|_2^2-1)\leq G(t)\leq C(\|\nabla u_t\|_2^2+1),
\end{align}
with $G(t)=\int\mu|\nabla u_t|^2+(\mu+\lambda(\rho))|\mbox{div}u_t|^2+\lambda(\rho)_t\mbox{div}u\mbox{div}u_{t}
-p(\rho)_t\mbox{div}u_t+\rho_t\frac{|u_t|^2}{2}+\rho_tu\cdot\nabla u\cdot u_t\mbox{d}x$.
From \eqref{collect}, it holds that
\begin{align}\label{collect1}
\begin{split}
\frac12\|\sqrt{\rho}u_{tt}\|_2^2+\frac{\mbox{d}}{\mbox{d}t}G(t)
&\leq C[\|(\rho_{tt}, p(\rho)_{tt},\lambda(\rho)_{tt})\|_2^2+\| u_t\|_4^2+\|H_t\|_4^2\\
&+\|\nabla H_t\|_2^2+\|\nabla^3 u\|_2^2+(\|\nabla u\|_{\infty}^2+1)(G(t)+1)].
\end{split}
\end{align}
Multiplying the inequality \eqref{collect1} by $t$ and integrating the resulted equation in $t$ over $[\tau, t_1]$ with $\tau,\  t_1\in[0,T]$ give that
\begin{align}\label{collect2}
\begin{split}
&\int_{\tau}^{t_1}t\|\sqrt{\rho}u_{tt}\|_2^2(t)\mbox{d}t+t_1G(t_1)
\\&\leq C\tau G(\tau)+C\int_{\tau}^{t_1}[\|(\rho_{tt}, p(\rho)_{tt},\lambda(\rho)_{tt})\|_2^2+\| u_t\|_4^2+\|H_t\|_4^2+\|\nabla H_t\|_2^2\\
&+\|\nabla^3 u\|_2^2+G(t)]\mbox{d}t+C\int_{\tau}^{t_1}[(\|\nabla u\|_{\infty}^2+1)(tG(t)+1)]\mbox{d}t.
\end{split}
\end{align}
It follows from Lemma \ref{lem4.3} and \eqref{gt} that $G(t)\in L^1(0,T)$. Hence, due to \cite{kim}, there exists a subsequence $\tau_k$ such that
\begin{align*}
\tau_k\rightarrow0,\ \tau_kG(\tau_k)\rightarrow0,\ \mbox{as}\ k\rightarrow+\infty.
\end{align*}
Take $\tau=\tau_k$ in \eqref{collect2}, then $k\rightarrow+\infty$ and applying the Gronwall's inequality, one shows that
\begin{align}
\sup_{t\in[0,T]}[t\|\nabla u_t\|_2^2]+\int_0^Tt\|\sqrt{\rho}u_{tt}\|_2^2\mbox{d}t\leq C.
\end{align}
Note that \eqref{lem4.43} gives that
\begin{align}
\sup_{t\in[0,T]}[t\|(\rho_{tt}, p(\rho)_{tt},\lambda(\rho)_{tt})\|_2^2]+\int_0^Tt\|\nabla^2u_{t}\|_2^2\mbox{d}t\leq C.
\end{align}
Since $u_t=(L,B)^t-u\cdot\nabla u+\frac{1}{\rho}H\cdot\nabla H$, we prove that
$$\nabla(L,B)^t=\nabla u_t+\nabla(u\cdot\nabla u)-\nabla(\frac{1}{\rho}H\cdot\nabla H).$$
Consequently, it holds that
\begin{align}
\sup_{t\in[0,T]}[t\|\nabla(L,B)^t\|_2^2]\leq C,
\end{align}
which together with \eqref{bl3.74}  implies that
\begin{align}
\sup_{t\in[0,T]}[t\|(L,B)^t\|_2^2]\leq C\sup_{t\in[0,T]}[t\|\nabla(L,B)^t\|_2^2]\leq C.
\end{align}
Thus it holds that
\begin{align}
\sup_{t\in[0,T]}[t\|u_t\|_2^2]\leq C\sup_{t\in[0,T]}[t\|(L,B)^t\|_2^2+t\|u\cdot\nabla u\|_2^2+t\|\frac{1}{\rho}H\cdot\nabla H\|_2^2]\leq C.
\end{align}
Clearly,
\begin{align}
\sup_{t\in[0,T]}[t\|u_t\|_{H^1}^2]+\int_0^Tt\|u_t\|_{H^2}^2\mbox{d}t\leq C.
\end{align}
Applying $\partial_{x_ix_j}$, $i,\ j=1,\ 2$, to the continuity equation in \eqref{eq_EP_1} leads to
\begin{align*}
&(\rho_{x_ix_j})_t+u\cdot\nabla(\rho_{x_ix_j})+u_{x_ix_j}\cdot\nabla\rho
+u_{x_i}\cdot\nabla\rho_{x_j}+\rho_{x_ix_j}\mbox{div}u+\rho_{x_i}(\mbox{div}u)_{x_j}\\
&+\rho_{x_j}(\mbox{div}u)_{x_i}+\rho(\mbox{div}u)_{x_ix_j}=0.
\end{align*}
Multiplying the above equation by $q|\nabla^2\rho|^{q-2}\rho_{x_ix_j}$ with $q>2$ given in Theorem \eqref{thm_main} and summing over $i,\ j=1,\ 2$, and then integrating the resulting equation with respect to $x$ over $\mathbb{T}^2$ shows that
\begin{align*}
\frac{\mbox{d}}{\mbox{d}t}\|\nabla^2\rho\|_q^q&\leq (q-1)\|\|\nabla u\|_{\infty}\|\nabla^2\rho\|_q^q+Cq\|\nabla^2\rho\|_q^{q-1}[\|\nabla\rho\|_{2q}\|\nabla^2u\|_{2q}\\
&+\|\nabla u\|_{\infty}\|\nabla\rho\|_{2q}+\|\rho\|_{\infty}\|\nabla^3u\|_q],
\end{align*}
which leads to
\begin{align}\label{nabla}
\begin{split}
\frac{\mbox{d}}{\mbox{d}t}\|\nabla^2\rho\|_q&\leq C[\|\nabla u\|_{\infty}\|\nabla^2\rho\|_q+\|\nabla\rho\|_{2q}\|\nabla^2u\|_{2q}+\|\rho\|_{\infty}\|\nabla^3u\|_q]\\
&\leq C[\|\|\nabla u\|_{\infty}\|\nabla^2\rho\|_q+\|\nabla^2u\|_{W^{1,q}},
\end{split}
\end{align}
with $q>2$. In a similar fashion as \eqref{nabla}, one can deduce that
\begin{align}\label{nabla1}
\frac{\mbox{d}}{\mbox{d}t}\|\nabla^2p\|_q\leq C[\|\nabla u\|_{\infty}\|\nabla^2p\|_q+\|\nabla^2u\|_{W^{1,q}}].
\end{align}
Apply $\partial_{x_i}$ with $i=1,\ 2$ to the elliptic system $$\mathcal{L}_{\rho}u=\rho u_t+\rho u\cdot\nabla u-H\cdot\nabla H+\nabla p(\rho)+\nabla(\frac12|H|^2)$$ to get
\begin{align*}
&\mathcal{L}_{\rho}u_{x_i}=-\nabla(\lambda(\rho)_{x_i}\mbox{div}u)+\rho_{x_i} u_t+\rho u_{x_it}+\rho_{x_i} u\cdot\nabla u+\rho u_{x_i}\cdot\nabla u+\rho u\cdot \nabla u_{x_i}\\
&+\nabla p(\rho)_{x_i}
+\nabla(\frac12|H|_{x_i}^2)-H_{x_i}\cdot\nabla H-H\cdot\nabla H_{x_i}:=\Psi.
\end{align*}
Then the standard elliptic estimates give that
\begin{align*}
\|\nabla u\|_{W^{2,q}}&\leq C[\|\nabla u\|_q+\|\Psi\|_q]\\
&\leq C[1+(\|\nabla u\|_{\infty}+1)\|(\nabla^2\rho, \nabla^2p)\|_q+\|\nabla^2u\|_{2q}+\|u_t\|_{W^{1,q}}+\|\nabla^2H\|_q],
\end{align*}
which incorporating \eqref{nabla} and \eqref{nabla1} implies that
\begin{align}\label{nabla2}
\begin{split}
\frac{\mbox{d}}{\mbox{d}t}\|(\nabla^2\rho,\nabla^2p)\|_q
&\leq C[1+(\|\nabla u\|_{\infty}+1)\|(\nabla^2\rho, \nabla^2p)\|_q+\|u\|_{H^3}+\|u_t\|_{H^1}\\
&+\|\nabla u_t\|_q+\|\nabla^2H\|_q].
\end{split}
\end{align}
From Lemma \ref{lem2}, we have
\begin{align*}
\int_0^T\|\nabla u_t\|_q\mbox{d}t\leq C\int_0^T\|\nabla^2 u_t\|_2\mbox{d}t\leq C\sup_{t\in[0,T]}[\sqrt{t}\|\nabla u_t\|_2]\int_0^Tt^{-\frac12}\mbox{d}t\leq C.
\end{align*}
Therefore, from \eqref{nabla2} and the Gronwall's inequality, it holds that
\begin{align}\label{nabla3}
\begin{split}
\|(\nabla^2\rho,\nabla^2p)\|_q
&\leq \|(\nabla^2\rho_0,\nabla^2p(\rho_0))\|_q+C\int_0^t(1+\|u\|_{H^3}+\|u_t\|_{H^1}\\
&+\|\nabla u_t\|_q)\mbox{d}s \times\mbox{e}^{C\int_0^t(\|\nabla u\|_{\infty}+1)\mbox{d}s}\\
&\leq C,
\end{split}
\end{align}
which then leads to
\begin{align*}
\sup_{t\in[0,T]}\|(\rho, p(\rho))\|_{W^{2,q}}\leq C.
\end{align*}
Hence Lemma \ref{lem4.5} is now proved.
\end{proof}
\section{Proof of Theorem \ref{thm_main}}
In this section we complete the proof of Theorem \ref{thm_main}.

\begin{proof}[Proof of Theorem \ref{thm_main}]
From the uniform bounds in Lemmas \ref{lem3.1}-\ref{lem3.7} and Lemma \ref{lem4.1}-\ref{lem4.5}, we can show the solution sequence $(\rho^{n}, u^{n}, H^{n})$ converges to a limit $(\rho, u, H)$ satisfying the same bounds as $(\rho^{n}, u^{n}, H^{n})$ when $n\rightarrow\infty$ and the limit $(\rho,u,H)$ is the uinque solution to the original problem \eqref{eq_EP_1}-\eqref{isentropic}. We omit the details for brevity. Now, we will prove that $(\rho, u, H)$ satisfy the bounds in Theorem \ref{thm_main} and $(\rho, u, H)$ is in fact a classical solution to \eqref{eq_EP_1}.

Note that
$$(u,H)\in L^2(0,T;H^3(\mathbb{T}^2))\times L^2(0,T;H^3(\mathbb{T}^2)),$$
$$(u_t,H_t)\in L^2(0,T;H^1(\mathbb{T}^2))\times L^2(0,T;H^1(\mathbb{T}^2)),$$
then by Sobolev embedding theorem, one obtains
$$u\in C([0,T];H^2(\mathbb{T}^2))\hookrightarrow C([0,T]\times\mathbb{T}^2),$$
$$H\in C([0,T];H^2(\mathbb{T}^2))\hookrightarrow C([0,T]\times\mathbb{T}^2).$$
After a similar argument, from $(\rho, p(\rho))\in L^{\infty}([0,T];W^{2,q}(\mathbb{T}^2))$ and $(\rho_t, p(\rho)_t)\in L^{\infty}([0,T];H^{1}(\mathbb{T}^2))$, clearly it holds that
$$(\rho, p(\rho))\in C([0,T];W^{1,q}(\mathbb{T}^2))\cap C([0,T];W^{2,q}_{weak}(\mathbb{T}^2)).$$
Thus, from Lemma \ref{lem4.5}, clearly $(\rho,p(\rho))\in C([0,T]; W^{2,q}(\mathbb{T}^2))$.
 Theorem \ref{thm_main} is proved.
\end{proof}

 \vspace{0.4cm}

\textbf{Acknowledgements} The authors would like to thank Prof. Y. Wang for his value discussion.

\end{document}